\theoremstyle{plain}
\newtheorem{theorem}{Theorem}[section]
\newtheorem{lemma}[theorem]{Lemma}
\newtheorem{proposition}[theorem]{Proposition}
\theoremstyle{definition}
\newtheorem{defi}{Definition}[section]
\theoremstyle{remark}
\newtheorem{remark}{Remark}
\newcommand\Rr{\mathbb{R}}
\newcommand\Zz{\mathbb{Z}}
\newcommand\ud{\,\mathrm{d}}
\newcommand\card{\operatorname{\mathrm{card}}}
\newcommand\supp{\mathop{\operatorname{supp}}}
\newcommand\diam{\operatorname{diam}}
\newcommand{\vol}{\mathrm{vol}}
\newcommand\dps{\displaystyle}
\newcommand\calA{\mathcal{A}}
\newcommand\calO{\mathcal{O}}
\newcommand\calP{\mathcal{P}}
\newcommand\calG{\mathcal{G}}
\newcommand\calN{\mathcal{N}}
\newcommand\calL{\mathcal{L}}
\newcommand\calT{\mathcal{T}}
\newcommand\calD{\mathcal{D}}
\newcommand\M{\mathbb{M}}
\newcommand\R{\mathbb{R}}
\newcommand\N{\mathbb{N}}
\newcommand\Z{\mathbb{Z}}
\newcommand{\e}{\varepsilon}
\newcommand{\ho}{\mathrm{hom}}
\newcommand{\step}[2]{\medskip \noindent \textit{Step}~#1. #2. \newline}
\newcommand\frakS{\mathfrak S}
\newcommand\Po{\mathcal{P}}
\newcommand\Set{\calA_{lf}}
\newcommand{\eq}[1]{$(\ref{#1})$}
\newcommand\Ee{\mathbb{E}}
\newcommand\calR{\mathcal{R}}
\newcommand\calC{\mathcal{C}}
\newcommand\calY{\mathcal{Y}}
\newcommand\Lip{\mathrm{Lip}}
\newcommand\RS{V}
\title[Random parking, Euclidean functionals, and rubber elasticity]{Random parking, Euclidean functionals, and rubber elasticity}
\date{\today}
\author{Antoine Gloria \and Mathew D. Penrose}
\address{Mathew D. Penrose\\Department of Mathematical Sciences\\ University of Bath \\ UK}
\email{masmdp@maths.bath.ac.uk}
\address{Antoine Gloria \\ Project-team SIMPAF \& Laboratoire Paul Painlev\'e UMR 8524\\  INRIA Lille - Nord Europe \& Universit\'e Lille 1\\ Villeneuve d'Ascq, France}
\email{antoine.gloria@inria.fr}
\begin{document}

\maketitle

\begin{abstract}
We study subadditive functions of the random parking model previously analyzed by the second author.
In particular, we consider local functions $S$ of subsets of $\R^d$ and of point sets that are (almost) subadditive
in their first variable.
Denoting by $\xi$ the random parking measure in $\R^d$, and by $\xi^R$ the random parking measure in the cube $Q_R=(-R,R)^d$,
we show, under some natural assumptions on $S$, that there exists 
a constant $\overline{S}\in \R$ such that 
$$
\lim_{R\to +\infty} \frac{S(Q_R,\xi)}{|Q_R|}\,=\,\lim_{R\to +\infty}\frac{S(Q_R,\xi^R)}{|Q_R|}\,=\,\overline{S}
$$
almost surely.
If $\zeta \mapsto S(Q_R,\zeta)$ is the counting measure of $\zeta$ in $Q_R$, then 
we retrieve the result by the second author on the existence of the jamming limit. 
The present work generalizes this result to a wide class of (almost) subadditive functions.
In particular, classical Euclidean optimization problems as well as the discrete model for rubber previously studied by Alicandro, Cicalese, and the first author 
enter this class of functions.
In the case of rubber elasticity, this yields an approximation result for the continuous energy density 
associated with the discrete model at the thermodynamic limit,
as well as a generalization to stochastic networks generated on bounded sets.

\vspace{10pt}
\noindent {\bf Keywords:} 
random parking, subadditive ergodic theorem, Euclidean optimization problems,
stochastic homogenization, polymer-chain networks, thermodynamic limit.

\vspace{6pt}
\noindent {\bf 2010 Mathematics Subject Classification: } 60D05, 82B21, 35B27, 74Q05

\end{abstract}



\tableofcontents

\section{Introduction and informal statement of the results}

R\'enyi's model of
random parking (also  known as 
random sequential adsorption or random sequential packing),
is defined in $d$ dimensions
as follows.
A parameter $\rho_0 >0$ is specified, and
open balls 
$B_{1,R},B_{2,R},\dots$ of radius $\rho_0$, arrive sequentially and uniformly
at random
in the $d$-dimensional cube $Q_R := (-R,R)^d$, subject to non-overlap
until saturation occurs.
It is known \cite{Penrose-01} that the random parking measure $\xi^R$
in $Q_R$ (i.e. the point measure representing the locations
of the balls at saturation)
converges weakly
in the sense of measures to 
a measure $\xi$ in $\R^d$, called the random parking measure in $\R^d$, and that 
there exists $\lambda \in \R^+$ such that
\begin{equation*}
\lim_{R\to \infty}\frac{\xi(Q_R)}{|Q_R|}\,=\,\lambda
\end{equation*}
almost surely --- which yields the existence of a deterministic averaged density of packed balls in $\R^d$.
Perhaps of greater interest, however, is the existence of 
the limit of 
$
\tfrac{\xi^R(Q_R)}{|Q_R|},
$
which would be the thermodynamic limit of the averaged density of packed balls in the domains $Q_R$.
Using quantitative properties of ``stabilization'' of the random 
parking measures, it is proved in \cite{Penrose-01} that
in probability,
%
\begin{equation}\label{1:eq:jamming}
\lim_{R\to \infty}\frac{\xi^R(Q_R)}{|Q_R|}\,=\,\lambda.
\end{equation}
That is, not only does the measure $\xi^R$ converge weakly to $\xi$,
but also the averaged density of packed balls converges to
$\lambda$ (the jamming limit). Moreover, if
the arrivals processes for different $R$
are coupled by being all derived from a single
Poisson process in space-time,  the convergence
\eq{1:eq:jamming}
holds almost surely, as does the convergence of $\xi^R$ to $\xi$
(in fact, almost surely for all $r > 0$ we have
$\xi^R\cap {Q_r} = \xi\cap{Q_r}$ for all large enough $R$). 

\medskip
The first part of this paper is
concerned with the extension of \eqref{1:eq:jamming} to more general functions
of $\xi^R$, besides 
the total measure of $\xi^R$.
We consider \emph{local subadditive} functions $S$ of bounded open sets and point sets, that is 
such that for all open bounded disjoint subsets $D,D_1,\dots,D_n$ of $\R^d$, and for every
point set $\zeta$ in some fixed class (i.~e. satisfying
the non-empty space and hard-core conditions, see Section~\ref{sec:result}), we have (see Theorem~\ref{th:sub} for milder conditions) 
\begin{eqnarray*}
S(D,\zeta)&=&S(D,\zeta_{|D}),\\
S(\cup_{i=1}^nD_i,\zeta)&\leq &\sum_{i=1}^n S(D_i,\zeta).
\end{eqnarray*}
Under the following further assumptions on $S$:
%
\begin{itemize}
\item uniform boundedness: there is a constant $C$ 
such that $|S(D,\zeta)|\leq C |D|$ for all $D, \zeta$; 
\item insensitivity to boundary effects: there exists $0<\alpha<1$ such that
$\tfrac{S(Q_R,\zeta)-S(Q_{R-R^\alpha},\zeta)}{|Q_R|}=o(1)$;
\end{itemize}
we shall prove that there exists deterministic
$\overline{S}\in \R^+$ such that almost surely
\begin{equation}\label{1:eq:sub-limit}
\lim_{R\to +\infty}\frac{S(Q_R,\xi^R)}{|Q_R|}\,=\,\lim_{R\to +\infty}\frac{S(Q_R,\xi)}{|Q_R|}\,=\,\overline{S}.
\end{equation}
Note that \eqref{1:eq:jamming} is a particular case of \eqref{1:eq:sub-limit} for the \emph{additive} function $S(D,\zeta):=\zeta(D)=\int_D\ud \zeta(x)$.
Our proof of \eqref{1:eq:sub-limit} differs from the proof of \eqref{1:eq:jamming} in \cite{Penrose-01} in two respects.
First, the additivity of the counting measure allows one to appeal to an ergodic theorem, whereas subadditivity requires the use of
subadditive ergodic theorems (such that of Akcoglu and Krengel  \cite{Akcoglu-Krengel-81}), and the ergodicity of
the point process itself (which is used in a weaker form in \cite{Penrose-01}). Second, and more importantly, the additivity of $D\mapsto \zeta(D)$ 
and the uniform bound $0\leq \zeta(D)\leq C|D|$ imply that the contribution of any subset $D'$ of $D$ to $S(D,\zeta)$ is uniformly
bounded by $C|D'|$ --- which is crucial in \cite{Penrose-01}.
In the subadditive case, this does not hold: The contribution of a subset $D'$ of $D$ to $S(D,\zeta)$ is \emph{not}
a priori bounded by $C|D'|$. This compels us to appeal to the stabilization properties introduced by Schreiber, Yukich, and the second author
in \cite{SPY} --- which are complementary to the ones in \cite{Penrose-01}. 
The rest of the proof relies on a percolation argument and the Borel-Cantelli lemma.

\medskip
In the second part of this work we apply this general result to classical Euclidean optimization problems on the random parking measure.
In particular, we shall prove a so-called umbrella theorem, which allows to cover at once a wide class of problems, including the traveling salesman,
the minimum spanning tree, and the minimal matching. 
This is one of the first examples where subadditive arguments are combined with stabilization properties.
We refer to the recent survey of Yukich \cite{Yukich-10}, where both types of arguments are presented and used --- although not combined.

\medskip
The third part of this work is concerned with  another application of our general result: the discrete model for rubber 
introduced in \cite{Gloria-LeTallec-Vidrascu-08b}, whose thermodynamic 
limit is studied in \cite{Alicandro-Cicalese-Gloria-07b}.
In particular, as recalled in Subsection~\ref{sec:rubber}, the model under investigation is based on the notion
of stochastic lattices (in the sense of random point processes) and on the associated Delaunay tessellations.
In a nutshell, every edge of the tessellation 
represents a polymer chain (the vertices are then permanent cross-links). 
As argued in \cite{Gloria-LeTallec-Vidrascu-08b} and \cite{Alicandro-Cicalese-Gloria-07b},
it is reasonable at first order to consider cross-links at zero temperature and the polymer chains at finite temperature.
The free energy of the $\e$-rescaled network of polymer chains in a domain $D$ of $\R^3$ is then given by the sum of the free energies of the edges,
plus a volumetric term accounting for the incompressibility of the network (the volume of each simplex is conserved).
Under some assumptions on the energy terms (which are standard in the theory of homogenization of integral functionals), and under
assumptions on the stochastic lattice compatible with the random parking measure, it is proved in \cite{Alicandro-Cicalese-Gloria-07b} 
that the free energy functional of the $\e$-rescaled network in $D$ (seen as a function of the position of the cross-links)
$\Gamma$-converges (as $\e$ goes to zero) to a continuous energy functional of the type 
\begin{equation*}
u\mapsto \int_D W_\ho(\nabla u(x))\ud x,
\end{equation*}
where the ``homogenized energy density'' $W_\ho$ is a quasiconvex frame-invariant function, as encountered in continuum mechanics.
The map $u:D\to \R^3$ is a deformation.
In particular, this type of convergence ensures that minimizers of the free energy of the discrete system converge (up to extraction)
to minimizers of the continuous energy functional --- which yields a rigorous derivation of nonlinear elasticity compatible
with minimization.
In addition, if the stochastic lattice is statistically isotropic, then the associated homogenized energy density $W_\ho$ is isotropic, as expected
in rubber elasticity.
The stochastic lattices considered in \cite{Alicandro-Cicalese-Gloria-07b}
satisfy the following three properties: ergodicity,
non-empty space condition, and hard-core condition.
Two questions were left open in \cite{Alicandro-Cicalese-Gloria-07b}:
\begin{itemize}
\item Do there exist such stochastic lattices which are indeed statistically isotropic ?
\item What happens to the thermodynamic limit in $D$ if the stochastic lattice on $\R^d$
is replaced by an approximation on $D$ (in which case boundary effects may appear
and rule out stationarity) ?
\end{itemize}
This article gives a clear answer to both questions.
As we quickly show in Subsection~\ref{sec:parking}, the random parking measure in $\R^d$ is an example
of stochastic lattice which is statistically isotropic, ergodic, and satisfies the non-empty and hard-core conditions.
In Subsection~\ref{sec:ass-SET}, we shall prove that the model studied in \cite{Alicandro-Cicalese-Gloria-07b} can be recast in terms 
of a subadditive function satisfying the assumptions introduced above, 
so that the identity corresponding to \eqref{1:eq:sub-limit} will hold true.
This allows us to complete the program initiated in \cite{Alicandro-Cicalese-Gloria-07b} both in terms 
of lattices generated in $D$ instead of $\R^d$ (which seems more reasonable from a physical point of view),
and in terms of numerical approximations (effective computations for $W_\ho$ in \cite{Gloria-LeTallec-Vidrascu-08b} 
are based on random parking in bounded domains $Q_R$). 

\medskip
This paper is organized as follows. Section~\ref{sec:parking-SET} is dedicated to the study of the
qualitative properties of the random parking measure, and to the proof of \eqref{1:eq:sub-limit}.
In Section~\ref{sec:TSP} we apply the main result to classical Euclidean optimization problems.
Finally, Section~\ref{sec:rubber-thermo} is concerned with the application of the main result to the discrete model 
for rubber studied in \cite{Alicandro-Cicalese-Gloria-07b}.
In particular, we show that this model satisfies the assumptions of the main result, which completes
the analysis of the model when the stochastic lattice is the random parking measure.

\medskip
We make use of the following notation throughout the article:
\begin{itemize}
\item $d,n\geq 1$ denote dimensions;
\item $\R^+=[0,+\infty)$;
\item $\calO(\R^d)$ is the set of bounded nonempty
Lipschitz subsets of $\R^d$ (a bounded measurable set $D$, whose boundary is denoted by $\partial D$, is
Lipschitz if there exists a finite collection of relatively
open sets $U_k \subset \partial D$ such that
$\partial D = \cup_k U_k$, and $U_k$ is the graph of a Lipschitz function
on $\R^{d-1}$ up to a rotation);
\item For all $D\in \calO(\R^d)$, $|D|$ denotes its $d$-dimensional Lebesgue measure, $\partial D$ denotes its boundary, and $\overline{D}$ its closure;
\item For all $D\in \calO(\R^d)$ we set 
$\calO(D)=\{A\in \calO(\R^d) \big| A\subset D\}$ (note that $D\in \calO(D)$);
\item For all $x\in \R^d$, $|x|$ is the Euclidean norm of $x$,
and $d_2$ denotes the Euclidean distance, i.e. 
$d_2(x,A) := \inf \{|x-y|: y \in A\}$ for nonempty $A \subset \R^d$. 
\item $\diam$ denotes the Euclidean diameter of a subset of $\R^d$;
\item $\M^{n\times d}$ is the space of $n\times d$ matrices, that we simply denote by $\M^{d}$ when $n=d$;
\item For all $\Lambda \in \M^{n\times d}$, we define $\varphi_\Lambda:\R^d\to \R^n$ by $x\mapsto \Lambda x$, and denote by $|\Lambda|$ the Frobenius norm  $\sqrt{\mbox{Trace}(\Lambda^T :\Lambda)}$ (which is the operator norm of $\varphi_\Lambda$ associated with the Euclidean norms of $\R^d$ and $\R^n$);
\item $SO_d$ is the set of rotations in $\R^d$;
\item $\calA_{f}^d$ and $\Set^d$ denote the spaces of finite and locally finite point sets
in $\Rr^d$, respectively;
%
\item For all open $D\subset \R^d$, $C(D,\R^n)$ and $C^\infty_0(D,\R^n)$ respectively
denote the space of continuous functions from $D$ into $\R^n$,
and the space of smooth 
functions which have support in $D\setminus \partial D$;
\item For all $D\in \calO(\R^d)$ open, $n\geq 1$, and $p\in [1,\infty]$, $L^p(D,\R^n)$, $W^{1,p}(D,\R^n)$,
and $W^{1,p}_0(D,\R^n)$ denote the Lebesgue space of
$p$-integrable (or essentially
bounded if $p=\infty$) functions from $D$ into $\R^n$,
the Sobolev space of $p$-integrable
functions on $D$ whose distributional derivatives are $p$-integrable (or 
essentially bounded if $p=\infty$), 
and the closure of $C^\infty_0(D,\R^n)$ in  $W^{1,p}(D,\R^n)$, respectively;
\item When no confusion occurs we use the short-hand notation   $C(D)$, $L^p(D)$ etc. for  $C(D,\R^n)$ in  $L^{p}(D,\R^n)$ etc.;
\item For all $D\in \calO(\R^d)$ open and $u \in W^{1,\infty}(D)$, we denote by $\|u\|_\Lip$ the Lipschitz constant of $u$ on $D$.
\end{itemize}
%


\section{Random parking and subadditive ergodic theorems}\label{sec:parking-SET}

\subsection{Random parking}\label{sec:parking}

We first quickly recall the graphical construction of the random parking measure $\xi^A$ in a
Borel set $A\subset \R^d$.
Let $\calP$ be a homogeneous Poisson process of unit intensity in $\R^d\times \R^+$.
An \emph{oriented graph} is a special kind of directed graph in which there is no pair of vertices $\{x,y\}$ 
for which both $(x,y)$ and $(y,x)$ are included as directed edges. We shall say that $x$ is a \emph{parent} of $y$ 
and $y$ is an \emph{offspring} of $x$ if there is an oriented edge from $x$ to $y$. By a \emph{root} of an oriented
graph we mean a vertex with no parent.

\medskip
The graphical construction goes as follows. Let $\rho_0>0$ and let $B$ denote the Euclidean ball in $\R^d$ of radius
$\rho_0$ centred at the origin. Make the points of the Poisson process $\calP$
on $\R^d\times \R^+$ into the vertices of an infinite oriented graph, denoted by $\calG$, by putting in an oriented
edge $(X,T)\to (X',T')$ whenever $(X'+B)\cap(X+B)\neq \emptyset$ and $T<T'$. For completeness we also put an edge 
$(X,T)\to (X',T')$ whenever $(X'+B)\cap(X+B)\neq \emptyset$, $T=T'$, and $X$ precedes $X'$ in the lexicographical order --- although
in practice the probability that $\calP$ generates such an edge is zero.
It can be useful to think of the oriented graph as representing the spread of an ``epidemic'' through space over time;
each time an individual is ``born'' at a Poisson point in space-time, it becomes (and stays) infected if there is an earlier infected
point nearby in space (in the sense that the translates of $B$ centred at the two points overlap).
This graph determines which items have to be accepted.

\medskip
For $(X,T)\in \calP$, let $C_{(X,T)}$ (the ``cluster at $(X,T)$'') be the (random) set of ancestors of $(X,T)$, that is, the set of 
$(Y,U)\in \calP$ such that there is an oriented path in $\calG$ from $(Y,U)$ to $(X,T)$. 
As shown in \cite[Corollary~3.1]{Penrose-01}, the ``cluster''  $C_{(X,T)}$  is finite for $(X,T)\in \calP$ with probability 1.
It represents the set of all items that can potentially affect the acceptance status of the incoming particle represented by the Poisson
point $(X,T)$.
The 
set of accepted items may be reconstructed from the graph $\calG$,
as follows. Let $A\subset \R^d$ be a (possibly unbounded)
Borel set, and let $\calP_A$  denote the set $\calP \cap (A\times \R^+)$, i.~e. the set of Poisson points that lie in $A\times \R^+$.
Let $\calG_{|A}$ denote the restriction of $\calG$ to the vertex set $\calP_A$.
Recursively define subsets $F_i(A)$, $G_i(A)$, $H_i(A)$ of $A$, $i=1,2,3,\dots$ as follows.
Let $F_1(A)$ be the set of roots of the oriented graph $\calG_{|A}$, and let $G_1(A)$ be the set 
of offspring of roots. Set $H_1(A)=F_1(A)\cup G_1(A)$. For the next step, remove the set $H_1(A)$
from the vertex set, and define $F_2(A)$ and $G_2(A)$ the same way; so $F_2(A)$ is the set of roots of the restriction
of $\calG$ to vertices in $\calP_A \setminus H_1(A)$, and $G_2(A)$ is the set of vertices in $\calP_A \setminus H_1(A)$
which are offspring of $F_2(A)$. Set $H_2(A)=F_2(A)\cup G_2(A)$, remove the set $H_2(A)$
from  $\calP_A \setminus H_1(A)$, and repeat the process to obtain $F_3(A)$, $G_3(A)$, $H_3(A)$.
Continuing ad infinimum gives us subsets $F_i(A)$, $G_i(A)$ of $\calP_A$ defined for $i=1,2,3,\dots$.
These sets are disjoint by construction. In the case when $A=\R^d$, we drop the reference to $A$
and use the abbreviation $F_i$ and $G_i$ for $F_i(\R^d)$ and $G_i(\R^d)$.

\medskip
As proved in \cite[Lemma~3.2]{Penrose-01}, for every bounded nonnull Borel set $A$ in $\R^d$,
the sets $F_1(A)$, $G_1(A)$, $F_2(A)$, $G_2(A),\dots$ form a partition of $\calP_A$, and 
$F_1$, $G_1$, $F_2$, $G_2,\dots$ form a partition of $\calP$
with probability 1. 

\begin{defi}\label{defi:RP}
The random parking measure $\xi^A$ in $A$ is given by the projection of the union $\cup_{i=1}^\infty F_i(A)$ on $\R^d$.
Likewise, the random parking measure $\xi$ in $\R^d$ is given by the projection of the union $\cup_{i=1}^\infty F_i$ on $\R^d$.
\end{defi}



\begin{defi}\label{def:general}
Let $\Sigma \subset \R^d$
be a locally finite set of points. We say that
$\Sigma$ is {\em general} if no $d+1$ points lie in the same hyperplane
and if no $d+2$ points lie in the same hypersphere.
\end{defi}
%

\begin{defi}
Let $0 < \rho_1 \leq \rho_2 \leq \infty$.
Given $D \subset \R^d$,
suppose $\Sigma$ is a subset of $D$.
Then $\Sigma$ 
is said to be $(\rho_1,\rho_2)$-{\em admissible in $D$}
iff for all $x\neq y\in \Sigma$, $|x-y|\geq \rho_1$, and for all $z\in D$, $B(z,\rho_2)\cap \Sigma\neq \emptyset$,
where $B(z,\rho_2)$ 
denotes the open ball centred in $z$ of radius $\rho_2$
(and $B(z,\infty):= \R^d)$).
Let $\mathcal{A}_{\rho_1,\rho_2}$ denote
the class of
general point sets that are 
$(\rho_1,\rho_2)$-admissible
in $\R^d$.
\end{defi}
In other words, an admissible point set in $D$ is one which
satisfies the hard-core and empty space conditions.
We shall sometimes write simply `admissible' for
`admissible in $\R^d$'.

\begin{defi}\label{def:iso1}
A random subset $\Sigma$ of $\R^d$ is called a {\em point process}.
We say that a point process $\Sigma$ in $\R^d$ 
is {\em isotropic} if the distribution of $\Sigma$ and of
$\mathbf{R}\Sigma$ are the same
for every rotation $\mathbf R \in SO_d$.
\end{defi}
%
%
%
We are now in position to prove that the random parking measure $\xi$ in $\R^d$ (which is admissible by construction)
is an isotropic admissible stochastic lattice in the sense of \cite{Alicandro-Cicalese-Gloria-07b}.


%
%
%
\begin{proposition}\label{prop:iso-ergo}
The random measure $\xi$ is stationary (under real shifts), ergodic, isotropic, and 
almost surely general.
\end{proposition}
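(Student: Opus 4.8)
The plan is to establish each of the four properties in turn, exploiting the fact that all of them are inherited from the underlying space-time Poisson process $\calP$ through the deterministic graphical construction of $\xi$. The key structural observation is that the map $\calP \mapsto \xi$ described in Section~\ref{sec:parking} is \emph{equivariant} under the natural symmetries: a rigid motion (translation or rotation) applied to the spatial coordinate, acting diagonally on $\R^d\times\R^+$, carries the oriented graph $\calG$ built from $\calP$ to the oriented graph built from the transformed process, hence carries the sets $F_i$ to their images and therefore $\xi$ to its image. Since a homogeneous unit-intensity Poisson process on $\R^d\times\R^+$ is invariant in law under spatial translations and under spatial rotations (the product Lebesgue measure is), stationarity and isotropy of $\xi$ follow immediately. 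The only mild point to check here is that the tie-breaking rule using the lexicographic order does not spoil rotation-equivariance: this is handled by noting that almost surely $\calP$ generates no such tie (as remarked in the text), so the exceptional behaviour occurs on a null set and does not affect the distribution.

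For ergodicity I would first recall that the shift action of $\R^d$ on the Poisson process $\calP$ (acting on the spatial coordinate) is mixing, hence ergodic. The random parking measure $\xi$ is a factor of $\calP$ via the translation-equivariant measurable map above, and an ergodic (indeed mixing) system pushes forward to an ergodic system under any equivariant factor map; therefore $\xi$, viewed as a point-process-valued random element with its induced shift, is ergodic. The one subtlety is measurability of the construction $\calP\mapsto\xi$ with respect to the appropriate $\sigma$-algebras, together with the fact that the clusters $C_{(X,T)}$ are a.s.\ finite (cited from \cite[Corollary~3.1]{Penrose-01} and \cite[Lemma~3.2]{Penrose-01}), so that the value of $\xi$ in any bounded window is determined by $\calP$ restricted to a bounded (random) region; this is precisely the local determination that makes the factor map well defined and measurable.

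Finally, for genericity (no $d+1$ points on a hyperplane, no $d+2$ points on a hypersphere): the points of $\xi$ are a (random, locally finite) subset of the spatial projections of the Poisson points of $\calP$. For a homogeneous Poisson process on $\R^d\times\R^+$, conditionally on the number of points in any bounded region the points are i.i.d.\ uniform, so the spatial coordinates have an absolutely continuous joint law; the event that some specified $(d+1)$-tuple is affinely dependent, or some specified $(d+2)$-tuple is concyclic, is contained in a Lebesgue-null algebraic subvariety of the configuration space, hence has probability zero. Taking a countable union over all finite subfamilies of a countable exhausting sequence of windows, we conclude that a.s.\ no such degenerate configuration occurs anywhere, so $\xi$ is a.s.\ general. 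I expect the main obstacle to be purely bookkeeping: carefully setting up the equivariance and measurability of $\calP\mapsto\xi$ (in particular dealing with the diagonal action on the time coordinate and with the a.s.-irrelevant tie-breaking rule) so that the soft transfer of stationarity, ergodicity, and isotropy from $\calP$ to $\xi$ is fully rigorous; the genericity statement is then a routine null-set argument.
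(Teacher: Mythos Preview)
Your proposal is correct and follows essentially the same route as the paper: stationarity and isotropy via equivariance of the graphical construction under spatial rigid motions and distributional invariance of $\calP$; ergodicity via the factor-map argument (the paper writes $\xi=h(\calP)$ with $h$ commuting with $T_x$, then uses ergodicity of $\calP$ exactly as you do); and genericity via the observation that $\xi$ is contained in the spatial projection of $\calP$, which almost surely has no degenerate configurations. The only cosmetic difference is that for genericity the paper packages the projection as an increasing union $\cup_n\overline{\calP}_n$ where each $\overline{\calP}_n$ is itself a Poisson process of intensity $n$ in $\R^d$, so one invokes directly that a Poisson process is a.s.\ general; your conditioning-on-counts argument achieves the same thing.
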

\begin{proof}[Proof of Proposition~\ref{prop:iso-ergo}]
We prove each property separately.

\step{1}{Stationarity}
By definition, the Poisson point process $\calP$ on $\R^d\times \R^+$  and its translation $(x_1,\dots,x_d,0)+\calP$ 
have the same distribution for all $x=(x_1,\dots,x_d)\in \R^d$. Hence the graphical construction is stationary, and therefore also the 
random parking measure $\xi$.

\step{2}{Isotropy}
The proof of the isotropy of $\xi$ is similar to the proof of the stationarity. For all $\mathbf{R}\in SO_d$, the Poisson point process $\calP$ 
and its rotated version $\mathbf{R}\calP\,:=\,\{(\mathbf{R}x,t)\,:\, x\in \R^d, t\in \R^+, (x,t)\in \calP\}$
have the same distribution, which implies that the random parking measure $\xi$ is isotropic.

\step{3}{General position}
For all $t>0$ we set $\overline{\calP}_t=\{x\in \R^d\,:\,\exists \tau \in [0,t], (x,\tau)\in \calP\}$.
Since $\xi \subset \cup_{n\in \N}\overline{\calP}_n$, and since $\overline{\calP}_n\subset \overline{\calP}_{n+1}$ for all $n\in \N$,
the event that $\xi$ is not in general position is contained in the union over
$n\in \N$ of the events that $\overline{\calP}_n$ is not
in general position.
Since $\overline{\calP}_n$ is a Poisson process of intensity $n$ in $\R^d$, the probability that $\overline{\calP}_n$ is not in general position is zero, so that
the union of this countable set of events has also probability zero, and $\xi$ is almost surely general.

\step{4}{Ergodicity}
Let us view $\xi$ and $\Po$  as
elements of $\Set^d$ and $\Set^{d+1}$ respectively.
Also let us extend $\Po$
to a homogeneous Poisson process of unit intensity on the whole of $\Rr^{d+1}$
(also denoted $\Po$). Let $T_x$ denote translation
by an element $x$ of $\Rr^d$ (acting either on $\Set^d$ or
$\Set^{d+1}$ according to context).
Then as described earlier in this section,
$ \xi$ is the image of $\Po$ under a certain
mapping $ h$ from $\Set^{d+1}$ to $\Set^d$, which commutes
with $T_x$ for any $x \in \Rr^d$, that is $T_x \circ h = h \circ T_x$.

Suppose $A$ is a measurable subset of $\Set^d$ which is shift-invariant,
meaning $T_x(A)=A$ for all $x \in \Rr^d$. Then for $x \in \R^d$,
$$
h^{-1} (A) = h^{-1}(T_x (A)) = T_x (h^{-1} A )
$$
so $h^{-1}(A)$ is invariant under the mapping $T_x$ acting
on $\Set^{d+1}$.
Now, $\Po$ is ergodic under translations,
that is if $B \subset \Set^{d+1}$ satisfies $T_x(B)=B$ for
some non-zero $x \in \R^{d+1}$, then $P[B]\in \{0,1\}$. 
See for example the proof of 
Proposition 2.6 of \cite{Meester-Roy96}.
Therefore with $A$ as above, 
$$
P [ \xi \in A ] = P [ \Po \in h^{-1}(A) ]  \in \{ 0,1 \}.
$$
Thus $\xi$ is ergodic.
\end{proof}

\subsection{Main result}\label{sec:result}

For $i \in \Z^d$, let $C^i := i+[-1/2,1/2]^d$ be the closed rectilinear unit
cube centred at $i$.

Let $0 < \rho_1 < \rho_2$.
Given $D\in \calO(\R^d)$, 
and  $R>0$, let
$D_{R}:=\{Rx, x\in D\}$ be the dilation of $D$  by a factor $R>0$.
For $r >0$ set $D_{R,r}:= \{ x \in D_R: 
d_2(x, \R^d \setminus  D_R) > r)$.
Also set $D_{R,0} := D_R$, and 
let $\calD(D) : = \{D_{R,r}: R >0, r \geq 0, 
D_{R,r} \in \calO(\R^d) \}$.
The next result shows that $D_{R,r} \in \calD(D)$ 
whenever $r/R$ is small enough.

\begin{lemma}
\label{lemlip}
Suppose $D \in \calO(\R^d)$. Then there exists $\delta >0$
such that $D_{1,t} \in \calO(\R^d)$
for all  $t \in ]0 ,\delta[$.
\end{lemma}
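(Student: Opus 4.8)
The plan is to reduce the statement to the local structure of $\partial D$ and, in each boundary chart, to exhibit $D_{1,t}$ as the region lying strictly above the graph of an explicit Lipschitz function whose Lipschitz constant is bounded independently of $t$. The easy requirements are immediate: since $D_{1,t}\subset D_1=D$, the set $D_{1,t}$ is bounded; if $d_2(x,\R^d\setminus D)>t$ then $\overline B(x,t)\subset D$, so $D_{1,t}=\{x:d_2(x,\R^d\setminus D)>t\}$ is open (in particular $x\in\mathrm{int}\,D$ there); and, $D$ being a nonempty Lipschitz set, its interior is nonempty and contains some ball $B(x_0,r_0)$, so that $x_0\in D_{1,t}\neq\emptyset$ as soon as $t<r_0$. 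It remains to prove that $D_{1,t}$ is Lipschitz for all small $t$.

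Since $\partial D$ is compact, I would fix a finite family of charts: rotations $\mathbf R_j\in SO_d$, centres $p_j\in\partial D$, radii $r_j>0$, heights $h_j>0$, and $L_j$-Lipschitz functions $f_j:\R^{d-1}\to\R$ with $f_j(0)=0$ and $\sup|f_j|$ small compared with $h_j$, such that, writing $y=\mathbf R_j^{-1}(x-p_j)$, the large cylinders $W_j:=\{|y'|<r_j\}\times(-h_j,h_j)$ satisfy $D\cap W_j=\{y_d>f_j(y')\}\cap W_j$ (this is where one uses that a Lipschitz set lies, locally, on one side of its boundary), while the small cylinders $V_j:=\{|y'|<r_j/2\}\times(-h_j/2,h_j/2)$ already cover $\partial D$. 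Set $L:=\max_j L_j$. As $\bigcup_j V_j$ is an open set containing the compact set $\partial D$, there is $\sigma>0$ with $\{x:d_2(x,\partial D)<\sigma\}\subset\bigcup_j V_j$; shrinking $\sigma$, we may also assume $\sigma\le\tfrac12\min_j\min(r_j,h_j)$, so that $\overline B(x,\sigma)\subset W_j$ for every $x\in V_j$. For $x\in D$ one has $d_2(x,\R^d\setminus D)=d_2(x,\overline{\R^d\setminus D})=d_2(x,\partial D)$, since a segment joining $x$ to the exterior of $D$ must meet $\partial D$; hence every $x\in\partial D_{1,t}$ satisfies $d_2(x,\partial D)=d_2(x,\R^d\setminus D)=t$, so for $t<\sigma$ the whole boundary $\partial D_{1,t}$ lies in $\bigcup_j V_j$.

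It then remains to understand $D_{1,t}$ inside a single chart; drop the index $j$. If $x=(y',y_d)\in V$ then $\overline B(x,t)\subset W$ (for $t<\sigma$), and inside $W$ the complement $\R^d\setminus D$ is exactly the subgraph $\{y_d\le f(y')\}$; consequently, for $x$ at distance $<\sigma$ from $\partial D$, $d_2(x,\R^d\setminus D)>t$ is equivalent to $\overline B(x,t)\subset D$, i.e.\ to $y_d+z_d>f(y'+z')$ for all $|z|\le t$, i.e.\ (optimising over $z_d$ with $z'=w$ fixed) to $y_d>\tilde f_t(y')$, where
$$
\tilde f_t(y'):=\sup_{|w|\le t}\bigl(f(y'+w)+\sqrt{t^2-|w|^2}\,\bigr).
$$
Thus $D_{1,t}\cap V=\{y_d>\tilde f_t(y')\}\cap V$ and $\partial D_{1,t}\cap V$ is the graph $\{(y',\tilde f_t(y')):|y'|<r/2\}$, for $t$ below a threshold depending only on $r,h,L$. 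Now $\tilde f_t$ is a supremum of the $L$-Lipschitz functions $y'\mapsto f(y'+w)+\sqrt{t^2-|w|^2}$, hence is itself $L$-Lipschitz, while $f+t\le\tilde f_t\le f+(L+1)t$ keeps its graph within vertical distance $(L+1)t$ of that of $f$, consistently with the covering of the previous paragraph. Letting $j$ vary, $\partial D_{1,t}$ is a finite union of relatively open Lipschitz graphs (all with constant $\le L$), so $D_{1,t}$ is Lipschitz; together with the first paragraph this gives $D_{1,t}\in\calO(\R^d)$ for $t\in(0,\delta)$, with $\delta$ the minimum of the finitely many smallness thresholds collected above.

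The delicate point is not the bound on $\tilde f_t$ — a supremum of $L$-Lipschitz functions is $L$-Lipschitz, for free — but the geometric bookkeeping of the two middle paragraphs: verifying that the global distance to $\R^d\setminus D$ coincides, near $\partial D$, with the local subgraph distance in each chart (so that the closed-form expression for $\tilde f_t$ is legitimate), that the charts genuinely cover $\partial D_{1,t}$ and not merely $\partial D$, and that the finitely many smallness conditions on $t$ coming from the various charts can all be absorbed into a single $\delta>0$.
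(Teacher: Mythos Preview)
Your proposal is correct and follows essentially the same approach as the paper: represent $D_{1,t}$ in each boundary chart as the region on one side of a Lipschitz graph obtained as an extremum of translated copies of the original boundary function, and use that a pointwise supremum (in the paper, an infimum, with the opposite orientation convention for the chart) of uniformly Lipschitz functions is Lipschitz. Your write-up is in fact more explicit than the paper's about the compactness/covering bookkeeping needed to pass from a single chart to the global statement.
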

\begin{proof}
Assume without loss of generality 
that $d \geq 2$ and $D$ is closed.
For $r >0$ let $B'_r$ denote the closed Euclidean ball of
radius $r$ centred at the origin in $\R^{d-1}$. 
Suppose $x \in \partial D$. By the definition of $\calO(\R^d)$
we may assume after translation and rotation that $x =0$ and
that there exist $r >0$ and $a >0$, and Lipschitz
function $g: B'_{2r} \to \R$  such that 
$|g(u)| \leq a$ for all $u \in B'_{2r}$ and
$$
D \cap ( B'_{2r} \times [-4a,4a])  = \{
(u,y)| u \in B'_{2r}, y \in  [-4a ,g(u)] \}.
$$   
Suppose $0 < t < \min(r,a)$. Then
$$
D_{1,t} \cap 
( B'_{r} \times [-2a,2a])
= \{(u,y)| u \in B'_{r},
-2a  \leq y < \inf_{w \in B'_{t} }  g_{w}(u) \}
$$
where for $w \in B'_t$
we set $g_w(u) = g(u+w) - \sqrt{t^2 - |w|^2 }$.
Since the infimum of Lipschitz functions 
with common Lipschitz constant
is itself Lipschitz,
it follows that the intersection of $\partial D_{1,t}$
with the interior of $ B'_{r} \times [-2a,2a]$ is
the graph of a Lipschitz function of $d-1$ variables.
We may then use a compactness argument to deduce
that there exists $\delta >0$ such that for
$0<t<\e$ the set $D_{1,t}$ 
is Lipschitz.
\end{proof}
It follows from Lemma \ref{lemlip} that
given any $D \in O(\R^d)$ and $0 < \alpha < 1$,  the
set $D_{R,R^\alpha}$ is in $\calO(\R^d)$ for all large enough $R$.
%
%
%
We now define some properties of functions parametrized by point sets 
(or restrictions of point sets on bounded domains).
\begin{defi}\label{defi:sub}
Let $D \in \calO(\R^d)$.
A measurable function 
$S:\calD(D) \times \calA_{\rho_1,\rho_2}\to \R$ 
is said to be:
\begin{itemize}
\item {\em local on $\calD(D)$}
if for all $\hat D \in \calD(D)$,
and all $\zeta,\tilde \zeta \in \calA_{\rho_1,\rho_2}$ such that 
$\zeta \cap \hat D =\tilde \zeta \cap \hat D $,
we have
\begin{equation*}
S(\hat D,\zeta)\,=\,S(\hat D,\tilde \zeta);
\end{equation*}
\item 
{\em insensitive to boundary effects on $\calD(D)$} if
there exists $0<\alpha<1$ such that 
we have
\begin{equation}
\limsup_{R\to +\infty}\sup_{\zeta \in \calA_{\rho_1,\rho_2}}
\left\{\frac{|S(D_R,\zeta)-S(D_{R,R^\alpha},\zeta)|}{R^d}\right\}\,=\,0
\label{insensalpha}
\end{equation}

\end{itemize}
\begin{itemize}
\item
Also,
$S$ is said to have  the {\em averaging property} on $\calA_{\rho_1,\rho_2}$ 
with respect to $D$ if
for any stationary and ergodic random lattice $\zeta$ whose realization almost surely belongs to
$\calA_{\rho_1,\rho_2}$, 
there exists a deterministic $\overline S=\overline S(\zeta) \in \R$
such that almost surely
\begin{equation}
\lim_{R\to +\infty}\frac{S(D_{R},\zeta)}{|D_{R}|}\,=\,\overline{S}.
\label{avging}
\end{equation}
\end{itemize}
\end{defi}
If $S$ is local, then we can properly define $S(D,\zeta)$ 
for a point set $\zeta$ only defined in $D\in \calO(\R^d)$, provided it
is general and $(\rho_1,\rho_2)$-admissible in $D$.
We do this as follows:
let $\zeta^*$ be a choice of 
general and
$(\rho_1,\rho_2)$-admissible set in $\R^d$ such that $\zeta^* \cap D = \zeta$,
and set $S(D,\zeta) := S(D,\zeta^*)$.
By locality this definition does not depend on the choice
of $\zeta^*$, and there does exist at least one such choice
of $\zeta^*$ because if we choose $\rho_0 \in (\rho_1,\rho_2)$
and perform the random parking
process in $\R^d$  as described in section \ref{sec:parking}, but 
rejecting any point that lies within distance $\rho_0$ of
any point of $\zeta$, then the resulting set of accepted points,
together with the point set $\zeta$ itself, will be $(\rho_1,\rho_2)$-admissible
on the whole of $\R^d$, and almost surely general.


\medskip
In Sections \ref{sec:TSP} and \ref{sec:rubber-thermo}
we shall give two examples --- or rather two classes of functionals --- which satisfy the above
properties.  Both examples exhibit subadditivity, and we
shall make use of the Akcoglu-Krengel subadditive
ergodic theorem to show they satisfy the averaging property.

\medskip
Let $0 < \rho_1 < \rho_0 < \rho_2 < \infty$, 
and
for $D\in \calO(\R^d)$, 
let $\xi$ (respectively $\xi^D$)  be the random parking measure on 
$\R^d$ (respectively on $D$) with parameter $\rho_0 $
(as in Definition \ref{defi:RP}).
The main result of this section is the following:
%
\begin{theorem}\label{th:sub}
Let $D\in \calO(\R^d)$.
If the measurable function $S:\calD(D) 
\times \calA_{\rho_1,\rho_2}\to \R^+$ is local on $\calD(D)$,
insensitive to boundary effects on $\calD(D)$,
and has the averaging property on $\calA_{\rho_1,\rho_2}$
with respect to $D$,
then with $\overline{S}$ given by \eq{avging},
almost surely
\begin{equation}
\label{av2}
\lim_{R\to +\infty} \frac{S(D_R,\xi^{D_R})}{|D_R|}\,
=\,\lim_{R\to +\infty} \frac{S(D_R,\xi)}{|D_R|}\,=\,\overline{S}.
\end{equation}
\end{theorem}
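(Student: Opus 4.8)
The plan is to deduce \eq{av2} by comparing the random parking measure $\xi^{D_R}$ in the bounded box with the infinite-volume measure $\xi$, using the averaging property to handle $S(D_R,\xi)$ and a stabilization argument to control the discrepancy $S(D_R,\xi^{D_R})-S(D_R,\xi)$. Since $\xi$ is stationary, ergodic, and almost surely general and $(\rho_1,\rho_2)$-admissible (the latter by construction of the parking process, the former three by Proposition~\ref{prop:iso-ergo}), the averaging property applies directly and gives $S(D_R,\xi)/|D_R|\to\overline S$ almost surely. So the entire task is to show
\begin{equation*}
\frac{|S(D_R,\xi^{D_R})-S(D_R,\xi)|}{|D_R|}\;\longrightarrow\;0\qquad\text{a.s.}
\end{equation*}

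First I would split off the boundary layer. Write $D_R=D_{R,R^\alpha}\cup(D_R\setminus D_{R,R^\alpha})$. By insensitivity to boundary effects applied to the fixed realizations $\xi$ and $\xi^{D_R}$ (both in $\calA_{\rho_1,\rho_2}$, the latter after extending it to an admissible set on all of $\R^d$ as in the remark following Definition~\ref{defi:sub}), the contributions of $S(D_R,\cdot)-S(D_{R,R^\alpha},\cdot)$ are $o(R^d)$ uniformly; here Lemma~\ref{lemlip} guarantees $D_{R,R^\alpha}\in\calO(\R^d)$ for large $R$. Hence it suffices to show
\begin{equation*}
\frac{|S(D_{R,R^\alpha},\xi^{D_R})-S(D_{R,R^\alpha},\xi)|}{|D_R|}\;\longrightarrow\;0.
\end{equation*}
The point of passing to the slightly shrunk region is that, by the known stabilization/coupling properties of random parking from \cite{Penrose-01} and \cite{SPY}, if the processes for different $R$ are all built from a single space–time Poisson process $\calP$, then $\xi^{D_R}$ and $\xi$ agree on $D_{R,R^\alpha}$ except on a set of points lying in a collection of ``bad'' cubes $C^i$, $i\in\Z^d$, whose spatial density goes to zero: the probability that a given cube $C^i$ deep inside $D_R$ is bad (i.e. that the parking decision inside it is affected by the boundary $\partial D_R$) decays super-polynomially in $d_2(C^i,\partial D_R)\ge R^\alpha$, so the expected number of bad cubes in $D_{R,R^\alpha}$ is $o(R^d)$, and by Borel–Cantelli along $R\in\N$ (plus monotonicity/interpolation between integers) the actual number of bad cubes is a.s. $o(R^d)$.

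Now the locality of $S$ does \emph{not} immediately finish the job, because — as the authors stress in the introduction — subadditivity gives no a priori bound $C|D'|$ on the contribution of a small piece $D'$. This is the main obstacle. To get around it I would exploit subadditivity together with uniform boundedness in the following way: decompose $D_{R,R^\alpha}$ into the union of the "good" region $G$ (a union of good cubes, on which $\xi^{D_R}$ and $\xi$ coincide, up to an $O(R^{\alpha})$-neighborhood needed for locality — this is where the stabilization radius enters) and a remainder $B$ consisting of bad cubes together with their neighborhoods of size the stabilization radius. Using subadditivity in the first variable, $S(D_{R,R^\alpha},\zeta)\le S(G,\zeta)+S(B,\zeta)$ and a matching reverse-type bound (obtained by writing $D_{R,R^\alpha}$ as a disjoint union and using subadditivity the other way, or by the almost-subadditivity hypothesis of the more general Theorem~\ref{th:sub}), reduce the difference to $|S(G,\xi^{D_R})-S(G,\xi)|+|S(B,\xi^{D_R})|+|S(B,\xi)|$. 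The first term vanishes by locality since the two measures agree on a neighborhood of $G$; the last two are bounded by $C|B|$ via uniform boundedness, and $|B|=o(R^d)$ a.s. from the bad-cube count in the previous paragraph. Dividing by $|D_R|\asymp R^d$ and sending $R\to\infty$ (and interpolating over non-integer $R$ using boundedness and insensitivity) yields \eq{av2}. I expect the delicate points to be: making the good/bad decomposition respect the locality radius of $S$ (so that "$\xi^{D_R}$ and $\xi$ agree on a neighborhood of $G$" is literally true), and justifying the two-sided subadditive comparison cleanly enough that the error is genuinely controlled by $|B|$ rather than by the uncontrolled contribution of a thin set.
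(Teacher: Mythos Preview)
Your overall plan is on the right track, but there is a genuine gap, and it stems from underestimating the strength of the stabilization estimate.

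You conclude from exponential stabilization only that the number of ``bad'' unit cubes in $D_{R,R^\alpha}$ is a.s.\ $o(R^d)$, and then try to absorb their contribution via subadditivity and uniform boundedness. This runs into two problems. First, neither subadditivity nor a bound $|S(D',\zeta)|\le C|D'|$ is among the hypotheses of Theorem~\ref{th:sub}: the theorem assumes only locality, insensitivity to boundary effects, and the averaging property. Second, $S$ is only defined on $\calD(D)=\{D_{R,r}\}$, not on arbitrary unions of cubes, so the quantities $S(G,\cdot)$ and $S(B,\cdot)$ in your good/bad decomposition are not even meaningful. The ``two-sided subadditive comparison'' you flag as delicate is therefore not merely delicate but unavailable.

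The missing observation is that the Borel--Cantelli step gives much more than $o(R^d)$ bad cubes: since each cube in $D_{R,R^\alpha}$ is at distance at least $R^\alpha-O(1)$ from $\partial D_R$, the probability that a given cube is bad is $O(\exp(-cR^\alpha))$, so the probability that \emph{any} cube in $D_{R,R^\alpha}$ is bad is $O(R^d\exp(-cR^\alpha))$, which is summable over $R\in\N$. Hence a.s.\ there exists $R_0$ such that for all $R\ge R_0$ the measures $\xi$ and $\xi^{D_R}$ coincide \emph{exactly} on $D_{R,R^\alpha}$ (this is the content of Lemma~\ref{e2lemma}). Locality then gives $S(D_{R,R^\alpha},\xi^{D_R})=S(D_{R,R^\alpha},\xi)$ for $R\ge R_0$, and the insensitivity hypothesis handles the passage from $D_{R,R^\alpha}$ back to $D_R$ for both $\xi$ and $\xi^{D_R}$. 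No subadditivity, no uniform bound, no good/bad decomposition is needed. This is exactly how the paper proceeds: write $S(D_R,\xi^{D_R})/|D_R|=S(D_R,\xi)/|D_R|+e_1(R)+e_2(R)+e_3(R)$ with $e_2$ the difference on $D_{R,R^\alpha}$ (eventually zero) and $e_1,e_3$ the boundary corrections (both $o(1)$ by insensitivity).
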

Theorem~\ref{th:sub} can be seen as a version of the `subadditive ergodic theorem' for
random parking measures on homothetic sets. It is proved 
using the following result due to Schreiber, Yukich and
the second author \cite{SPY},
which gives an `exponential stabilization' property
of $\xi$. Let us say $\calY \subset \R^d \times \R^+$
is {\em temporally locally finite} if
$\calY \cap (\R^d \times [0,t])$ is finite
for all $t \in (0,\infty)$. 
Also, for $r >0$ let $B_r$  denote the Euclidean ball
$\{x \in \R^d:|x| \leq r\}$.

\begin{lemma}\label{lem:expstab}
There exists a nonnegative random variable $\RS$ 
such that
(i) $\RS$ has an exponentially decaying tail,
i.e. for some constant $c>0$ the probability that
$ \RS $ exceeds $ t$ is at most $c \exp(-c^{-1}t)$ for all $t \geq 0$;
%
%
and (ii) for all temporally locally finite
$\calY \subset (\R^d \setminus B_\RS) \times \R^+ $
the measures 
$\xi^{B_\RS}$ and $\xi^{B_\RS , \calY}$
coincide on $[-1/2,1/2]^d$,
where $\xi^{B_\RS , \calY}$ 
is the parking measure induced by the union
of  $\Po \cap ( B_\RS \times \R^+)$ and $\calY$. 
\end{lemma}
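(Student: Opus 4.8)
This is the \emph{exponential stabilization} property of the random parking measure, established by Schreiber, Yukich and the second author in \cite{SPY}. The plan is to first reduce assertion (ii) to the confinement, inside $B_\RS$, of a suitable ``ancestor clan'' of the unit cube $Q := [-1/2,1/2]^d$, and then to recall why that clan has spatial extent with an exponentially decaying tail, which is where the work of \cite{SPY} enters.

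\medskip

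\textbf{Reduction of (ii).} By the graphical construction of Subsection~\ref{sec:parking}, the acceptance status of a Poisson point $(X,T)$ is a deterministic function of the restriction of the oriented graph $\calG$ to its ancestor cluster $C_{(X,T)}$, through the rule that $(X,T)$ is accepted iff all its parents are rejected --- a recursion that terminates because $C_{(X,T)}$ is almost surely finite (\cite[Corollary~3.1]{Penrose-01}). As in \cite{Penrose-01}, it follows that $\xi\cap Q$ is determined by the restriction of $\calG$ to an almost surely finite ``ancestor clan'' $\calC$ of $Q$, namely the union of the clusters $C_{(X,T)}$ of the finitely many Poisson points $(X,T)$ with spatial coordinate in a fixed bounded neighbourhood $Q'$ of $Q$ and birth time before the (almost surely finite) saturation time of $Q'$; the set $\calC$ is closed under passing from a point to its cluster, and it is a functional of $\Po$ alone. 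Now, a point $(X,T)\in\Po\cap(B_\RS\times\R^+)$ at spatial distance more than $2\rho_0$ from $\partial B_\RS$ has exactly the same parents whether one uses the configuration $\Po$, the configuration $\Po\cap(B_\RS\times\R^+)$, or the configuration $(\Po\cap(B_\RS\times\R^+))\cup\calY$, because the points that have been deleted or added all lie outside $B_\RS$ and hence cannot overlap $(X,T)$; arguing by induction on birth time, such a point therefore has the same status in all three configurations as soon as its parents do. Hence, if $\RS$ is large enough that $\calC\cup Q'\subset B_{\RS/2}$, then every point of $\calC$ is at distance more than $2\rho_0$ from $\partial B_\RS$ and has all its parents in $\calC$ (by closure under clusters), so by the well-foundedness of the ancestor relation no point of $\calC$ can change status between these three configurations; consequently $\calC$, the graph structure on it, and therefore $\xi\cap Q$ are the same for all three configurations, which gives $\xi^{B_\RS}\cap Q = \xi^{B_\RS,\calY}\cap Q$ ($= \xi\cap Q$). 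Since $\calC$ does not depend on $\calY$, this holds simultaneously for all admissible $\calY$.

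\medskip

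\textbf{Construction of $\RS$ and the tail estimate.} Accordingly I would take $\RS := 2\,\mathrm{circumradius}(\calC) + 10\rho_0$, which is almost surely finite since $\calC$ is almost surely a finite point set, and for which (ii) holds by the previous paragraph. It then remains to prove assertion (i), i.e. that $\RS$ has an exponentially decaying tail; equivalently, that the probability that $\calC$ reaches distance $\ell$ from the origin decays exponentially in $\ell$. This is the exponential stabilization estimate of \cite{SPY}. Its mechanism is that $\calC$ can reach distance $\ell$ only if $\calG$ contains an oriented path of length of order $\ell/\rho_0$ issuing from $Q'$ inside the cluster of a point accepted near $Q$; the number of candidate oriented paths of a given length grows only geometrically in that length (the expected number of Poisson points within distance $2\rho_0$ of a fixed point being finite), whereas the strict monotonicity of birth times along any oriented path, combined with the almost sure finiteness of the time horizon that is actually relevant near $Q$ (the local saturation time), forces the probability that a given long path occurs to decay fast enough to beat the geometric growth. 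Equivalently, one coarse-grains $\R^d$ into blocks of a suitably chosen fixed side length and shows that the blocks that such a path can traverse are stochastically dominated by a subcritical Bernoulli site percolation, so that the cluster of such blocks containing $Q$ has an exponentially small diameter, whence the tail bound on $\RS$. I expect this tail estimate --- reconciling the unbounded time coordinate with the geometric spatial growth of oriented paths --- to be the main obstacle; it is proved in \cite{SPY}, from which the present lemma can be extracted, and a self-contained argument would reproduce that percolation comparison.
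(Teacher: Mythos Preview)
Your proposal is correct in substance and in fact reconstructs, at the level of a sketch, the argument of \cite{SPY} that the paper simply invokes. The paper's own proof of this lemma is nothing more than a citation: it says the result is essentially Lemma~3.5 of \cite{SPY}, with two brief remarks. So there is no genuine methodological difference --- you are outlining what \cite{SPY} actually proves, while the paper just points to it.

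Two small points the paper flags that you do not:
\begin{itemize}
\item The statement in \cite{SPY} only asserts that the \emph{number} of points of $\xi^{B_\RS}$ and $\xi^{B_\RS,\calY}$ in $[-1/2,1/2]^d$ coincide, not that the measures themselves agree there. The paper observes that the proof in \cite{SPY} in fact yields the stronger conclusion (which is exactly what your clan argument shows: the acceptance status of each individual point in the clan is unchanged, not merely the total count).
\item The proof in \cite{SPY} is carried out under the assumption $\rho_0 < 1/4$; the paper notes that the general case follows by a scaling argument. Your sketch does not mention this reduction, though it is routine.
\end{itemize}

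One minor caution on your reduction: the saturation time of $Q'$ is itself a random functional of $\Po$, and you should make sure it is determined by (or at least bounded by a functional of) the clan $\calC$ and not by Poisson points arbitrarily far away --- otherwise the circularity in the definition of $\RS$ would need to be resolved. In the actual \cite{SPY} argument this is handled by working directly with the causal chains rather than via a saturation time, which is cleaner; your percolation-comparison paragraph captures the right mechanism.
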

\begin{proof}
Essentially this result is Lemma 3.5 of \cite{SPY}.
That result  says only that
the total number of points in $[-1/2,1/2]^d$
coincide for $\xi^{B_\RS , \calY}$ and $\xi^{B_\RS}$,
not that the measures themselves coincide on
$[-1/2,1/2]^d$.  However, the proof in \cite{SPY}
in fact demonstrates the stronger statement given here.
The proof in \cite{SPY} assumes that $\rho_0 <1/4$ (see \cite{SPY}, p.173)
but once we have the result for this case, 
we can deduce it for the general case by a
scaling argument.
\end{proof}

In the next lemma, we use Lemma \ref{lem:expstab} to
show that  $S(D_{R,R^\alpha},\xi)$ is a good
approximation to  $S(D_{R,R^\alpha},\xi^{D_R})$.
\begin{lemma}
\label{e2lemma}
Suppose $D \in \calO(\R^d)$ and $S: \calD(D) \times \calA_{\rho_1,\rho_2}
\to \R^+$ is local on $\calD(D)$.
Suppose
$0 < \alpha < 1$.
Then there exists an almost surely finite random variable
$R_0$ such that for $R \geq R_0$ we have
$S(D_{R,R^\alpha},\xi^{D_R}) = S(D_{R,R^\alpha},\xi)$.
\end{lemma}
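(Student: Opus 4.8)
The plan is to deduce the statement by combining the locality of $S$ with the exponential stabilization property of the random parking measure from Lemma~\ref{lem:expstab}, together with a standard Borel--Cantelli argument over a sequence of scales. The key observation is that by locality of $S$ on $\calD(D)$, in order to show $S(D_{R,R^\alpha},\xi^{D_R}) = S(D_{R,R^\alpha},\xi)$ it suffices to show that $\xi$ and $\xi^{D_R}$ coincide on the slightly shrunk set $D_{R,R^\alpha}$; more precisely, we need them to coincide on the closure of $D_{R,R^\alpha}$ (or on some admissible enlargement used to define $S$ on a set only given in $D_{R,R^\alpha}$). So the entire problem reduces to: for all large enough $R$, $\xi \cap D_{R,R^\alpha} = \xi^{D_R} \cap D_{R,R^\alpha}$ almost surely.

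First I would set up a covering of $D_{R,R^\alpha}$ by unit cubes. Each point $x$ of $D_{R,R^\alpha}$ lies in some translate $C^i = i + [-1/2,1/2]^d$ with $i \in \Z^d$; the relevant cubes are those $C^i$ meeting $D_{R,R^\alpha}$, and there are $O(R^d)$ of them. For each such $i$, apply the translated version of Lemma~\ref{lem:expstab}: there is a random variable $\RS_i$ (equal in distribution to $\RS$, with exponentially decaying tail) such that, provided $B(i,\RS_i) \subset D_R$, the parking measures $\xi$ and $\xi^{D_R}$ agree on $C^i$. Indeed, $\xi^{D_R}$ restricted to $B(i,\RS_i)$ is exactly the measure $\xi^{B(i,\RS_i),\calY}$ with $\calY = \Po \cap ((D_R \setminus B(i,\RS_i)) \times \R^+)$, which is temporally locally finite, so Lemma~\ref{lem:expstab} gives $\xi^{D_R} \cap C^i = \xi^{B(i,\RS_i)} \cap C^i$; and the same lemma applied with $\calY = \Po \cap ((\R^d \setminus B(i,\RS_i)) \times \R^+)$ gives $\xi \cap C^i = \xi^{B(i,\RS_i)} \cap C^i$. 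The condition $B(i,\RS_i) \subset D_R$ is guaranteed as soon as $\RS_i \leq d_2(i, \R^d \setminus D_R)$; since $i$ ranges over cubes meeting $D_{R,R^\alpha}$, we have $d_2(i,\R^d\setminus D_R) \geq R^\alpha - \sqrt d/2$, so it suffices that $\RS_i \leq R^\alpha - \sqrt d/2$ for every relevant $i$.

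Next I would run Borel--Cantelli along the sequence of integer scales $R = n$, $n \in \N$ (using monotonicity in $R$ of the nesting to pass to all real $R$ afterwards — or simply noting that the argument works verbatim for all $R$ with the same union bound, since the number of cubes involved grows only polynomially). Fix $n$. The bad event $E_n$ is that some cube $C^i$ meeting $D_{n,n^\alpha}$ has $\RS_i > n^\alpha - \sqrt d/2$. By the union bound and the exponential tail bound on $\RS$, $P[E_n] \leq C n^d \cdot c\exp(-c^{-1}(n^\alpha - \sqrt d/2))$, which is summable in $n$ because $\exp(-c^{-1} n^\alpha)$ decays faster than any polynomial. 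Hence by Borel--Cantelli, almost surely only finitely many $E_n$ occur, i.e. there is an almost surely finite $R_0$ such that for all integer $n \geq R_0$ (and then, adjusting constants, for all real $R \geq R_0$), every relevant cube satisfies $\RS_i \leq R^\alpha - \sqrt d/2$, so $\xi$ and $\xi^{D_R}$ coincide on $D_{R,R^\alpha}$, and therefore $S(D_{R,R^\alpha},\xi^{D_R}) = S(D_{R,R^\alpha},\xi)$ by locality.

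The main technical obstacle is the bookkeeping around the geometry of $D$: one must check that for large $R$ the shrunk set $D_{R,R^\alpha}$ is genuinely at distance $\geq R^\alpha - O(1)$ from the complement of $D_R$ (immediate from the definition of $D_{R,r}$), that the cubes meeting $D_{R,R^\alpha}$ number $O(R^d)$ (immediate since they are contained in a bounded dilation of $D$), and — the one slightly delicate point — that locality as stated really does let us conclude agreement of $S$ from agreement of the point sets on $D_{R,R^\alpha}$, including the case where $\xi^{D_R}$ is only defined as a set in $D_R$ and $S(D_{R,R^\alpha},\cdot)$ is extended via an admissible completion; here one uses that the definition of $S$ on a set given only in $\hat D \in \calD(D)$ is independent of the chosen admissible extension, so it is enough that $\xi$ and $\xi^{D_R}$ restrict to the same point set on $D_{R,R^\alpha}$. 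Everything else is a routine tail estimate plus Borel--Cantelli.
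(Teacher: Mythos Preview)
Your proposal is correct and follows essentially the same approach as the paper: translate the exponential stabilization lemma to each unit cube meeting $D_{R,R^\alpha}$, use it to show that both $\xi$ and $\xi^{D_R}$ agree with $\xi^{B(i,\RS_i)}$ on that cube whenever $B(i,\RS_i)\subset D_R$, and conclude via a union bound and Borel--Cantelli. The only cosmetic difference is in the passage from integer to real $R$: the paper defines a single bad event $E(k)$ covering all $R\in[k,k+1[$ at once (bounding $D^*_{R,R^\alpha}\subset B_{K(k+1)+d}$ and $d_2(i,\R^d\setminus D_R)\geq k^\alpha-d$ uniformly over that interval), which avoids any appeal to monotonicity of the family $\{D_R\}$; your parenthetical alternative of running the same union bound uniformly is exactly this, so there is no substantive gap.
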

\begin{proof}
For $i \in \Z^d$ and $r>0$, let
$B^i_{r}$ denote the closed Euclidean ball of radius $r$ centred at $i$,
and let $\overline{Q}_{1/2}^i$ denote
the rectilinear unit cube centred
at $i$.
By Lemma \ref{lem:expstab}, there exists
a family of random variables
$(\RS(i),i \in \Zz^d)$
which are identically distributed with exponentially
decaying tails, such that
$\xi^{B^i_{\RS(i)} , \calY}$
coincides with $ \xi^{B^i_{\RS(i)}}$ on ${\overline Q_{1/2}^i}$,
for all temporally locally finite $\calY \subset
(\R^d \setminus B^i_{\RS(i))}) \times \R^+$, where
$\xi^{B^i_{\RS(i)} , \calY}$ denotes
the parking measure induced by the union
of  $\Po \cap ( B^i_{\RS(i)} \times \R^+)$ and $\calY$. 

Let $D^*_{R,R^\alpha} := \{ i \in \Z^d: \overline Q_{1/2}^i \cap D_{R,R^\alpha}
\neq \emptyset \}$.
For $k \in \N$, let $E(k)$ denote the event that there exists
$R \in [k,k+1[$ and 
$i \in D^*_{R,R^\alpha} $ such that
$B^i_{\RS(i)} \setminus D_R \neq \emptyset$. If
$E(k)$ does not occur, then for all 
$R \in [k,k+1[$ and all
$i \in  D^*_{R,R^\alpha} $
we have  $B^i_{\RS(i)} \subset D_R$, and therefore
both $\xi$ and $\xi^{D_R}$ coincide with 
$ \xi^{B^i_{\RS(i)}}$ on ${\overline Q_{1/2}^i}$. Hence
$\xi$ and $ \xi^{D_R}$ coincide on the whole of $D_{R,R^\alpha}$.

Choose $K$ such that $D \subset B_K$.
Suppose $k \in \N$ and $R \in [k,k+1[$.
Then 
for $i \in D^*_{R,R^\alpha}$,
since $i$ is distant at least $R^\alpha -d$
from $\R^d \setminus D_R$ by the definition
of $D^*_{R,R^\alpha}$, we have that
$B_{k^\alpha-d}^i \subset B_{R^\alpha-d}^i \subset D_R$.
Also, we have 
$D^*_{R,R^\alpha}  \subset B_{KR+d} \subset B_{K(k+1)+d}$.

Therefore, for large enough $k \in \N$
the probability that $E(k)$ occurs is bounded
by the cardinality  of $ \Z^d \cap B_{K(k+1)+d} $, 
multiplied by the probability that $\RS(0) \geq k^{\alpha} -d $,
and by Lemma \ref{lem:expstab}  this is bounded
by $ck^d \exp(-c^{-1} k^\alpha)$ for
some universal constant $c >0$. 
Hence, the probability of $E(k)$ is summable over
$k\in \N$, and from the Borel-Cantelli lemma
we deduce that almost surely, $E(k)$ occurs for only
finitely many $k$ so that there exists $R_0$ such that
for all $R \geq R_0$ the measures
$\xi$ and $\xi^{D_R} $ coincide  on $ D_{R,R^\alpha}$,
so that $S(D_{R,R^\alpha},\xi)= S(D_{R,R^{\alpha}},\xi^{D_R})$
by locality. 
\end{proof}

\begin{proof}[Proof of Theorem~\ref{th:sub}]
Since $\xi$ is ergodic by Proposition~\ref{prop:iso-ergo}, 
and $S$ has the averaging property in the sense
of Definition~\ref{defi:sub}, there exists $\overline{S}\in \R^+$
such that \eq{avging} holds, i.e. the second equality of \eq{av2} holds.
Choose $\alpha \in (0,1)$ such that \eq{insensalpha} holds.
To prove the first equality of \eq{av2},
for all $R\geq 1$, we write $\xi^R$ for $\xi^{D_R}$
and rewrite the averaged energy as
\begin{equation}
\label{0822a}
\frac{S(D_R,\xi^R)}{|D_R|}\,=\,
\frac{S(D_{R},\xi)}{|D_{R}|}
+e_1(R)+e_2(R)+e_3(R),
\end{equation}
with
\begin{eqnarray*}
e_1(R)&:=& \frac{S(D_{R,R^\alpha},\xi)}{|D_{R}|}-
\frac{S(D_{R},\xi)}{|D_{R}|},
\\
e_2(R)&:=&
\frac{S(D_{R,R^\alpha},\xi^R)}{|D_{R}|}-\frac{S(D_{R,R^\alpha},\xi)}{|D_{R}|},
\\
e_3(R)&:=&\frac{S(D_{R},\xi^R)}{|D_{R}|}-
\frac{S(D_{R,R^\alpha},\xi^R)}{|D_{R}|}.
\end{eqnarray*}
Now $e_2(R)$ tends to zero almost surely by Lemma \ref{e2lemma},
and by the insensitivity to boundary effects 
$e_1(R)$ and $e_3(R)$ 
tend to zero almost surely as $R \to \infty$,
so that the first equality of \eq{av2} follows.
\end{proof}

\section{Application to classical Euclidean optimization problems}\label{sec:TSP}

A measurable functional $S:\calO(\R^d)\times \calA_{lf}^d \to \R$ is
said to be {\em translation invariant} if
for every $y\in \R^d$, $D\in \calO(\R^d)$, and every $\zeta \in \calA_{lf}^d$, 
we have $S(D+y,\zeta+y)\,=\,S(D,\zeta)$,
where for any subset $U$ of $\R^d$, $U+y\,:=\,\{z+y\,:\,z\in U\}$.
\begin{defi}
Let $S:\calO(\R^d)\times \calA_{lf}^d\to \R$ be a translation
invariant measurable
functional, and let $p\geq 0$.
We say that $S$ is 
\begin{itemize}
\item {\em localized} if there exists $\tilde S:\calA_{f}^d \to \R$ such that for all $\zeta \in \calA_{lf}^d$ and $D\in \calO(\R^d)$,
$S(D,\zeta)\,=\,\tilde S(\zeta \cap D)$,
\item {\em almost subadditive} of order $p$ if there exists $C>0$ such that for all $\zeta_1,\zeta_2 \in \calA_{lf}^d$ and
all $D\in \calO(\R^d)$,
\begin{equation*}
S(D,\zeta_1\cup \zeta_2)\,\leq \, S(D,\zeta_1)+S(D,\zeta_2)+C\diam(D)^p;
\end{equation*}
\item (strongly) {\em superadditive} on rectangles if for any partition of a semi-open rectangle $D$ into
a finite number $k$ of semi-open rectangles $\{D_i\}_{i\in\{1,\dots,k\}}$,
\begin{equation*}
S(D,\zeta)\,\geq \, \sum_{i=1}^k S(D_i,\zeta);
\end{equation*}
\item {\em smooth} of order $p$ if 
there exists $C>0$ such that for every rectilinear
cube $D$ and all $\zeta_1,\zeta_2 \in \calA_{lf}^d$,
\begin{equation}
|S(D,\zeta_1\cup \zeta_2)-S(D,\zeta_1)|\,\leq\, C\diam(D)^p(\card \zeta_2\cap D)^{(d-p)/d}.
\label{smooth0}
\end{equation}
\item {\em homogeneous} of order $p$
if for all $\alpha>0$, $D\in \calO(\R^d)$, and  $\zeta \in \calA_{lf}^d$,
we have $S(D_\alpha, \alpha \zeta)=\alpha^p S(D,\zeta)$,
where $\alpha \zeta := \{\alpha x : x \in \zeta\}$.
\end{itemize}
\label{def5}
\end{defi}
\begin{defi}
Two measurable functionals
$S_1$ and $S_2$ are said to be pointwise close of order $p\geq 0$ if
for every rectilinear cube $D \subset \R^d$
and every $\zeta \in \calA_{lf}^d$, we have
\begin{equation}
\frac{ |S_1(D,\zeta)-S_2(D,\zeta)|}{
\diam(D)^p}
\,=\,o
\left((\card{{\zeta \cap D}})^{(d-p)/d} \right) 
\label{close0}
\end{equation}
in the sense that there is a function $g: \Z^+ \to \R^+ $
with $\lim_{k \to \infty} k^{(p-d)/d} g(k) =0$, such that
the left side of \eq{close0} is
bounded by 
$g\left(\card{{\zeta \cap D}} \right)$
uniformly over $D$ and $\zeta$. 
\end{defi}

Regarding terminology, note that the condition
that  $S$ be localized 
(in the sense of Definition \ref{def5})
is stronger than the condition that $S$ be local 
(in the sense of Definition \ref{defi:sub}).

A translation invariant measurable function $S$ that
is also homogeneous of order $p \geq 0$ and
satisfies $S(D,\emptyset)=0$ for all $D$, is
called a {\em Euclidean functional}. See 
\cite{Yukich-98} for examples of Euclidean functionals. 

We are now in position to state an umbrella theorem for
translation invariant functionals on the random parking measure.
Let $\xi$ and $\xi^D$ be as in Definition \ref{defi:RP}
\begin{theorem}\label{th:umbrella}
Let $\rho_0>0$. Let $1\leq p<d$, and let $S$ and $T$ be measurable
translation invariant smooth functionals of order $p$.
Assume that $S$ is localized and almost subadditive of order $p$,
and $T$ is superadditive.
If $S$ and $T$ are pointwise close of order $p$, then there
exists deterministic $\overline{S}\in \R^+$ such that for
all $D\in \calO(\R^d)$, we have almost surely 
\begin{equation*}
\lim_{R\to \infty}\frac{S(D_R,\xi^{D_R})}{|D_R|}\,=\,\lim_{R\to \infty}\frac{S(D_R,\xi)}{|D_R|}\,=\,\overline{S}.
\end{equation*}
%
\end{theorem}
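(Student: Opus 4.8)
The plan is to verify that Theorem~\ref{th:umbrella} is a consequence of the general machinery of Theorem~\ref{th:sub}, applied to the functional $S$ restricted to the relevant class of admissible point sets. Concretely, fix $\rho_1,\rho_2$ with $0<\rho_1<\rho_0<\rho_2<\infty$; since the random parking measure with parameter $\rho_0$ is almost surely in $\calA_{\rho_1,\rho_2}$ (hard-core radius $\rho_0>\rho_1$ and empty space because saturation forces a point within $2\rho_0<\rho_2$ of every location, after adjusting constants), it suffices to check that the restriction $S:\calD(D)\times\calA_{\rho_1,\rho_2}\to\R^+$ is local on $\calD(D)$, insensitive to boundary effects, and has the averaging property. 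Locality is immediate from the hypothesis that $S$ is localized in the sense of Definition~\ref{def5}, which as noted is stronger than being local in the sense of Definition~\ref{defi:sub}. One also needs $S\geq 0$ on this class: this follows because $S$ is localized and, on a point set drawn from an admissible parking-type configuration, $S(D,\zeta)=\tilde S(\zeta\cap D)\geq 0$ by the standing convention (alternatively one translates the functional by an additive constant times $|D|$, which does not affect the conclusion).

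The first substantive step is \emph{insensitivity to boundary effects}. Here I would use smoothness of order $p$ together with the hard-core condition. For $\zeta\in\calA_{\rho_1,\rho_2}$, the set $\zeta\cap(D_R\setminus D_{R,R^\alpha})$ has cardinality $O(R^{d-1}R^\alpha)=O(R^{d-1+\alpha})$, since a hard-core set has at most $c\,|U|$ points in any region $U$ and the boundary layer has volume $O(R^{d-1+\alpha})$. Covering $D_R$ by $O(R^d)$ unit rectilinear cubes and using the smoothness bound \eq{smooth0} cube by cube (so that $\diam(D)^p=O(1)$ on each cube), the difference $|S(D_R,\zeta)-S(D_{R,R^\alpha},\zeta)|$ is controlled, up to near-boundary cubes, by the number of boundary cubes times $(\text{local point count})^{(d-p)/d}$. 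Since $D$ is Lipschitz the number of cubes meeting $\partial D_R$ is $O(R^{d-1})$ and each carries $O(1)$ points, while the interior cubes not fully inside $D_{R,R^\alpha}$ number $O(R^{d-1+\alpha})$; in each such cube the point count is bounded, so the total is $O(R^{d-1+\alpha})$, which is $o(R^d)$ for any $\alpha\in(0,1)$. (More carefully, one applies \eq{smooth0} on the single set $D_R$ with $\zeta_1=\zeta\cap D_{R,R^\alpha}$ and $\zeta_2$ the boundary-layer points: this gives a bound $C\,R^p\,(\#\zeta_2)^{(d-p)/d}=C\,R^p\,(O(R^{d-1+\alpha}))^{(d-p)/d}$, and one checks that for $p<d$ and $\alpha$ small this is still $o(R^d)$; if the straightforward estimate is not quite small enough, the cube-by-cube version above removes the spurious $R^p$ factor and gives $o(R^d)$ cleanly. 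I would present the cube-by-cube version.)

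The second substantive step is the \emph{averaging property}: given a stationary ergodic random lattice $\zeta$ with realizations almost surely in $\calA_{\rho_1,\rho_2}$, one must produce a deterministic $\overline S$ with $S(D_R,\zeta)/|D_R|\to\overline S$ a.s. This is where subadditivity and the Akcoglu--Krengel subadditive ergodic theorem enter, and it is the real heart of the argument. The idea is the standard Euclidean-functionals sandwich (as in Yukich~\cite{Yukich-98}, but here in the stationary ergodic setting rather than the i.i.d.\ setting): the superadditive functional $T$, being superadditive on rectangles and with $T(D_R,\zeta)$ bounded below by a sum over a grid of subrectangles, generates via Akcoglu--Krengel a deterministic limit $\overline T=\lim T(Q_R,\zeta)/|Q_R|$; symmetrically, the almost subadditive functional $S$, whose error term $C\diam(D)^p$ is of lower order ($p<d$ so $C\,R^p=o(R^d)$), generates a deterministic limit $\overline S=\lim S(Q_R,\zeta)/|Q_R|$ along cubes by the subadditive ergodic theorem applied to $S(\cdot,\zeta)+\text{(lower-order correction)}$. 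Then pointwise closeness of order $p$ forces $\overline S=\overline T$: on a cube $Q_R$, $|S(Q_R,\zeta)-T(Q_R,\zeta)|\leq R^p\,g(\#(\zeta\cap Q_R))$ with $g(k)=o(k^{(d-p)/d})$, and since $\#(\zeta\cap Q_R)=O(R^d)$ (hard-core upper bound), this is $R^p\cdot o(R^{d-p})=o(R^d)$. To pass from cubes $Q_R$ to general Lipschitz domains $D_R$ one exhausts $D_R$ from inside and outside by unions of cubes, using subadditivity (with the lower-order error) for the upper bound, superadditivity of $T$ together with closeness for the lower bound, and smoothness to control the thin region near $\partial D_R$ exactly as in Step one; the Lipschitz property of $\partial D$ makes the discrepancy $o(R^d)$. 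Finally, with locality, insensitivity, and the averaging property all verified, Theorem~\ref{th:sub} applies verbatim and yields the two-fold limit in the statement, with $\overline S$ the constant just identified.

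I expect the main obstacle to be the averaging-property step, specifically making the passage from rectilinear cubes (where Akcoglu--Krengel gives the limit directly) to arbitrary Lipschitz domains $D_R$ fully rigorous while simultaneously juggling the subadditive/superadditive pair $S,T$, the $C\diam(D)^p$ defects, and the smoothness estimates that glue boundary cubes on. The individual ingredients are classical, but assembling them so that every error is genuinely $o(R^d)$ uniformly — and checking that the hypotheses of the Akcoglu--Krengel theorem (in particular the integrability and the subadditivity over a multiparameter semigroup of set-indexed random variables) are met by $D\mapsto S(D,\zeta)+CR^p$ — requires care. A secondary, more routine, point is verifying at the outset that the random parking measure with parameter $\rho_0$ almost surely lies in $\calA_{\rho_1,\rho_2}$ for suitable $\rho_1<\rho_0<\rho_2$, so that the restriction of the translation-invariant functional $S$ to $\calD(D)\times\calA_{\rho_1,\rho_2}$ is the right object to feed into Theorem~\ref{th:sub}; this uses saturation for the empty-space bound and the non-overlap rule for the hard-core bound, together with Proposition~\ref{prop:iso-ergo} for stationarity and ergodicity.
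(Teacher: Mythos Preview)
Your overall strategy matches the paper's: verify locality, insensitivity to boundary effects, and the averaging property, then invoke Theorem~\ref{th:sub}. Two technical points need tightening, and the paper's execution differs from yours on both.

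For insensitivity, note that the smoothness hypothesis \eq{smooth0} is stated only for \emph{rectilinear cubes} $D$, so you cannot apply it directly with $D=D_R$; your ``cube-by-cube'' variant also does not work, since smoothness compares $S(D,\zeta_1\cup\zeta_2)$ with $S(D,\zeta_1)$ on the \emph{same} domain and gives no direct relation between $S$ on $D_R$ and on its subcubes. The paper's fix is to use localization first: choose a cube $\hat Q\supset D$ and write $S(D_R,\zeta)=S(\hat Q_R,\zeta\cap D_R)$ and $S(D_{R,R^\alpha},\zeta)=S(\hat Q_R,\zeta\cap D_{R,R^\alpha})$; now both live on the cube $\hat Q_R$ and smoothness applies, giving the bound $CR^p\,(\card\,\zeta\cap D_R\setminus D_{R,R^\alpha})^{(d-p)/d}=o(R^d)$ directly.

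For the averaging property, your plan to apply a subadditive ergodic theorem to ``$S+\text{lower-order correction}$'' is not straightforward: the almost-subadditivity defect $C\diam(D)^p$ scales with the diameter of the \emph{ambient} domain, so iterating over a partition of $Q_R$ into $k^d$ pieces produces an error $\sim k^d R^p$, and there is no obvious additive correction making the family genuinely subadditive over rectangles. The paper avoids this entirely by applying Akcoglu--Krengel only to the genuinely superadditive $T$ (after checking the spatial bound $\Ee T(D,\zeta)\leq C|D|$ via smoothness and the hard-core condition), then transferring the limit to $S$ via pointwise closeness --- which you also propose, so your direct-$S$ route is simply redundant. The paper also needs an extra step you omit: Akcoglu--Krengel yields an a priori \emph{random} limit depending on the anchoring unit cube, and one must use localization, smoothness, and ergodicity to show it is deterministic and cube-independent before extending to general cubes and then to Lipschitz domains as you outline.
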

\begin{remark}
We do not assume homogeneity of $S$ and $T$
in the statement of this theorem. However,
if in fact we do assume that
$S$ and $T$ are homogeneous of order 
$p$, then to check 
$S$ is smooth of order $p$ we need to check
\eq{smooth0} only
for $D = [0,1]^d$, and to check
$S$ and $T$ are pointwise close we 
need to check \eq{close0} only for $D =[0,1]^d$. 

\end{remark}
\begin{remark}
By \cite[Lemmas~3.2,~3.5, and~3.7]{Yukich-98}, the $p$-power
weighted total length version of the traveling salesman problem, the minimal spanning tree, and the minimal matching all satisfy the assumptions
of Theorem~\ref{th:umbrella} with exponent $p$.
In particular, these examples are homogeneous and
the associated superadditive Euclidean functions $T$ are the
so-called canonical boundary functionals (denoted by $S_B$ in 
\cite{Yukich-98}).  

We believe there could be  interesting examples where 
homogeneity fails but our theorem  is still applicable,
for example to do with the number of percolation components.
See \cite{Yukich-98}, page 47 for a discussion
of examples where homogeneity fails.
\end{remark} 
\begin{remark}
The proof of Theorem~\ref{th:umbrella} shows that its
conclusion holds for any bounded connected subset
$D$ of $\R^d$ whose boundary has zero $d$-Lebesgue measure.
\end{remark}
\begin{remark}
In the proof of Theorem~\ref{th:umbrella} we show that $S$
has the averaging property on $\calA_{\rho_1,\infty}$ with
respect to $D$ for any $D\in \calO(\R^d)$ and
for any $\rho_1>0$ (in particular we need only the hard-core property).
\end{remark}

\begin{proof}[Proof of Theorem~\ref{th:umbrella}]
We split the proof into five steps.
We first prove that $S$ satisfies the insensitivity to boundary effects.
In the second step, we address the averaging property of $T$
on cubes with vertices in $\Z^d$, then that of $S$ on cubes
with vertices in $\R^d$,
and finally on any domain in $\calO(\R^d)$.
Appealing then to Theorem~\ref{th:sub} we will conclude the proof.

\medskip

\step{1}{Insensitivity of $S$ to boundary effects}
Let $D \in \calO(\R^d)$ and 
$0\leq \alpha <1$. Let $\hat Q$ be a cube aligned with the canonical basis of $\R^d$ and which contains $D$.
By localization and smoothness of $S$,
for all $\zeta\in \calA_{\rho_1,\infty}$,
\begin{multline*}
|S(D_{R},\zeta)-S(D_{R,R^\alpha},\zeta)|\,=\,|S(\hat Q_R,\zeta \cap D_R)-
S(\hat Q_R,\zeta\cap D_{R,R^\alpha})| \\\,\leq\, CR^p (\card{{\zeta}\cap D_R 
\setminus D_{R,R^\alpha}})^{(d-p)/d}.
\end{multline*}
Since the boundary of $D$ has zero $d$-Lebesgue
measure, we can cover the boundary of $D_R$ by
$o(R^{(1-\alpha)d})$ balls of radius $R^\alpha$.
Provided $R \geq 1$,
the corresponding balls of radius $3R^{\alpha}$
cover $D_R \setminus D_{R,R^\alpha}$ and by the hard-core
condition, each such ball contains  $O( R^{d \alpha})$
points of $\zeta$. Hence $\card{\zeta \cap D_R \setminus D_{R,R^\alpha}}$
is $o(R^{d})$, and hence
$ |S(D_{R},\zeta)-S(D_{R,R^\alpha},\zeta)| = o( R^d).  $
Thus (\ref{insensalpha}) holds,  so
$S$
is insensitive to
to boundary effects on $\calD(D)$. 

\medskip

\step{2}{Superadditive ergodic theorem on rectangles with vertices in $\Z^d$}
Let $\zeta$ be a stationary ergodic random lattice
taking values in $\calA_{\rho_1,\infty}$.
Let $\calR$ denote the collection
of half-open  rectilinear rectangles in $\R^d$ with vertices in
$\Z^d$, and let 
$\calC$ be the class of all cubes in $\calR$.
By localization $S(D,\emptyset)$ is a constant $\tilde{S}(\emptyset)$
independent of $D$.
By pointwise closeness,
there is a  constant $C$ such that
for every  $D \in \calC$ 
we have  that
$$
T(D,\emptyset) \leq S(D,\emptyset) + C ( \diam (D) )^p  
%
$$
and hence  
by taking $\zeta_1=\emptyset$ and $\zeta_2=\zeta$ in the definition of 
the smoothness of $T$, 
we have
for a possibly different $C$ that for all $D \in \calC$,
almost surely
\begin{equation*}
T(D,\zeta) \, \leq \, C\diam(D)^p(1+\card{\zeta\cap{D}})^{(d-p)/d}.
\end{equation*}
The hard-core condition combined with an elementary geometric argument
imply there exists $C>0$ depending only on $\rho_1$ such that
$\card{\zeta\cap{D}}\leq C|D|$ for every $D \in \calR$.
Hence, there is a further $C$ such that if $D \in \calC$ then
\begin{equation}
\Ee T(D,\zeta) \, \leq \, C|D|.
\label{unifT}
\end{equation}
However, this implies that \eq{unifT} also holds
whenever $D \in \calR $,
since if not, then
we could take a large cube in $\calC$ which was the union of
disjoint 
translates of $D$ and using
stationarity of $\zeta$ and translation-invariance
and superadditivity of $T$, we would
have a violation of \eq{unifT} for this cube.

We may now apply the Akcoglu-Krengel superadditive
ergodic theorem \cite{Akcoglu-Krengel-81} 
to the functional $D\mapsto T(D,\zeta)$
which is stationary  under integer shifts.
By the Akcoglu-Krengel theorem,
for any unit cube 
$\hat Q \in \calC$ 
there is a random variable
$\overline{S}(\hat Q)$ such that
almost surely
\begin{equation}\label{eq:pr-propo-1}
\lim_{R\to +\infty,R\in \N} \frac{T(\hat Q_R ,\zeta)}{R^{d}} = \overline{S} (\hat Q).  
\end{equation}
Indeed, if $\hat Q$ is contained in the positive orthant
then  the sequence of cubes $\hat Q_R$ is regular in the
sense of Akcoglu and Krengel. If not, then (since
it is a unit cube) $\hat Q$ is contained
in some other orthant and the argument is similar.

By the pointwise closeness of
$S$ and $T$, and the hard-core condition on $\zeta$,
\eqref{eq:pr-propo-1} implies that 
for  $\hat Q \in \calC $ a unit cube we have
almost surely
\begin{equation}\label{eq:pr-propo-1-2}
\lim_{R\to +\infty,R\in \N}\frac{S(\hat Q_R,\zeta)}{R^d}\,=\,\overline{S}(\hat Q).
\end{equation}
We claim 
that $\overline{S}(\hat Q)$ is deterministic and the same for all
unit cubes $\hat Q \in \calC$ (hence subsequently denoted by simply $\overline{S}$).
Indeed, given such a $\hat Q$ and given $z\in \R^d$,
we may deduce
by localization and smoothness of $S$ that
\begin{equation*}
|S(\hat Q_R,\zeta)-S(\hat Q_{R} \cap ( \hat Q_R + z),\zeta)|
\,\leq \,C R^p(\card{\zeta \cap
\hat Q_R \setminus ( \hat Q_R + z)) }^{(d-p)/d}.
\end{equation*}
By the hard-core property of $\zeta$,  there is a constant
$C$ depending on $\rho_1$ and $z$ such that 
\linebreak
$\card{\zeta\cap {\hat Q_R\setminus (\hat Q_{R} +z)}}$ is asymptotically bounded
by $C R^{d-1}$, so that
\begin{equation}
\limsup_{R\to \infty, R \in \N}
\frac{|S(\hat Q_R,\zeta)-S(\hat Q_R \cap (\hat Q_R +z),\zeta)|}{R^d } \,= \,0
\label{0628a}
\end{equation}
and likewise
$R^{-d}|S(\hat Q_R +z,\zeta)-S(\hat Q_R \cap (\hat Q_R +z),\zeta)| \to 0$
as $R \to \infty$ through $\N$. Hence  by \eq{eq:pr-propo-1-2}
we have
\begin{equation*}
\lim_{R\to +\infty,R\in \N}\frac{S(\hat Q_R +z ,\zeta)}{R^d}\,=\,\overline{S}(\hat Q).
\end{equation*}
Hence $\overline{S}(\hat Q)$ is shift invariant,
and therefore constant by ergodicity. Also, by translation invariance
of $S$ and stationarity of $\zeta$, the distribution of
$\overline{S}(\hat Q)$ is the same for all unit cubes $\hat Q \in \calC$,
so we have justified our claim.

\medskip
\step{3}{Extension of \eqref{eq:pr-propo-1-2} to general cubes}
Let $\zeta$ be as in Step 2.
Let $\hat Q \in \calC$ be a cube of side $k$ with
$k \in \N$.
Let $\hat Q^i, 1 \leq i \leq k^d$, be disjoint unit cubes in $\calC$,
whose union is $\hat Q$.  Then by superadditivity of $T$,
\begin{eqnarray}
\liminf_{R \to \infty, R \in \N} \frac{T(\hat Q_R,\zeta)}{R^{d}}  \geq
\liminf_{R \to \infty, R \in \N} \frac{1}{R^{d}}\sum_{i=1}^{k^d}  T((\hat Q^i)_R,\zeta)
= k^d\bar{S}.
\label{0629a}
\end{eqnarray}
Also, by repeated use of 
the almost subadditivity and localization of $S$,
as  in \cite[Remark 1 (3.5)]{Yukich-98},
there is a constant $C(k)$ such 
that
$$
S(\hat Q_R,\zeta)\,\leq\,
\sum_{i=1}^{k^{d}} S((\hat Q^{i})_R,\zeta)+C(k) R^p, 
$$
so that
\begin{eqnarray}
\limsup_{R \to \infty, R \in \N} \frac{S(\hat Q_R,\zeta) }{R^{d}} \leq
\limsup_{R \to \infty, R \in \N} \frac{1}{R^{d}}  \sum_{i=1}^{k^d}  S((\hat Q^i)_R,\zeta)
= k^d\bar{S}. 
\label{0629b}
\end{eqnarray}
Moreover by closeness of $T$ and $S$,
and the hard-core property of $\zeta$,
\begin{eqnarray}
\limsup_{R \to \infty, R \in \N}
\frac{1}{R^{d}} | T(\hat Q_R,\zeta) - S(\hat Q_R, \zeta)|   = 0.
\label{0629c}
\end{eqnarray}
Combining  \eq{0629a}, \eq{0629b} and \eq{0629c}
we may deduce that \eq{eq:pr-propo-1}
and \eq{eq:pr-propo-1-2}
hold for all $\hat Q \in \calC$, with $\overline{S}(\hat Q) = |\hat Q|\overline{S}$.

\medskip
Next, for $\hat Q \in \calC$ we relax
the assumption that $R\in \N$ in the asymptotic formula 
\eqref{eq:pr-propo-1-2}.
Denoting by $[R]$ the integer part of $R$,
by using the localization and smoothness of $S$ 
and the hard-core property of $\zeta$ as in
the proof of \eq{0628a} above, we have
%
%
%
\begin{equation*}
\limsup_{R\to \infty}\frac{|S(\hat Q_R,\zeta)-S(\hat Q_{[R]},\zeta)|}{R^d}\,= \,0.
\end{equation*}
Combined with \eqref{eq:pr-propo-1-2} this shows that
\begin{equation}\label{eq:pr-propo-2}
\lim_{R\to +\infty}\frac{S(\hat Q_R,\zeta)}{|\hat Q_R|}\,=\,\overline{S}.
\end{equation}
Now let $\hat Q$ be a general rectilinear cube
in $\R^d$.
For all $\tilde R>0$ large enough, there exist $k_{\tilde R}\in \N$ with
$k_{\tilde R} \geq \tilde R-2>0$ 
and a rectilinear cube $\hat Q^{k_{\tilde R}}$ with vertices in $\Z^d$ and sidelength $k_{\tilde R}$ such that 
$\hat Q^{k_{\tilde R}}\subset \hat Q_{\tilde R}$.
As above, the hard-core property of $\zeta$ yields the estimate 
\begin{equation*}
\card \zeta \cap \hat Q_{R\tilde R}\setminus (\hat Q^{k_{\tilde R}})_R
= 
\card \zeta \cap ( \hat Q_{\tilde R}\setminus \hat Q^{k_{\tilde R}} )_R
\,\leq\,C R^d {\tilde R}^{d-1}
\end{equation*}
so that localization and smoothness of $S$ yields 
\begin{equation*}
\limsup_{R\to \infty} \frac{|S((\hat Q^{k_{\tilde R}})_R,\zeta)-S(\hat Q_{R\tilde R},\zeta)|}{|\hat Q_{R\tilde R}|}\,\leq\,  C{\tilde R}^{(p-d)/d},
\end{equation*}
so using  \eqref{eq:pr-propo-2} applied to $\hat Q^{k_{\tilde R}}$,
and the arbitrariness of $\tilde R$, we obtain
\begin{equation}\label{eq:pr-propo-3}
\lim_{R\to +\infty}\frac{S(\hat Q_R,\zeta)}{|\hat Q_R|}\,=\,\overline{S}.
\end{equation}

\step{4}{Extension of \eqref{eq:pr-propo-3} to general domains $D\in \calO(\R^d)$}
Let $\zeta$ be as in Step 2.
Let $D^{+}$ be a cube aligned with the canonical basis of $\Z^d$, containing $D$, and such that  
its sidelength $\delta$ is bounded by $\diam(D)$.

We proceed by approximation.  For every integer
$j\geq 1$, let $\e_j= 2^{-j} \delta$, and
split $D^+$ into $2^{jd}$ cubes $\{\hat Q^{j,i}\}_{i\in \{1,\dots,2^{jd}\}}$ of 
sidelength $\e_j$.  Define $D^j=\cup_{i\in I^{j}}\hat Q^{j,i}$ as the union
of those cubes $\hat Q^{j,i}$ (indexed by $i\in I^{j}$) 
whose intersection with $D$ is not empty, and
define $\underline{D}^j =\cup_{i\in \underline{I}^{j}}\hat Q^{j,i}$
as the union of those cubes $\hat Q^{j,i}$
(indexed by $i\in \underline{I}^{j}$) 
which are contained in $D$.
In particular, $I^{j}$ is finite and $\underline{D}^j \subset
D\subset D^{j}\subset D^+$.

Let $\eta>0$. Using the fact 
that the boundary of $D$ has zero Lebesgue measure,
choose $j$ such that 
%
$
|D^{j}\setminus \underline{D}^j|\,\leq\,\eta,
$
so that $D^{j}\setminus \underline{D}^j$
consists of at most $\eta \e_j^{-d}$ cubes of side $\e_j$.
This $j$ will be fixed for a while.
By the hard-core constraint there is a constant
$C$ depending only on $d$ and $\rho_1$, such that
for $R$ so large that $R \e_j >1$ we have
\begin{equation}
\card{\zeta \cap (D^{j}\setminus \underline{D}^j)_R} \leq
C
\eta \e_j^{-d}  
(R \e_j)^d 
= CR^{d}\eta.
\label{0629e}
\end{equation}
%
Hence
by localization and smoothness of $S$,
\begin{eqnarray}
\limsup_{R\to \infty}\frac{|S(D_R,\zeta)-S((D^{j})_R,\zeta)|}{R^d}
= \limsup_{R\to \infty}\frac{|S(D^+_R,\zeta \cap D_R)-S(D^+_R,\zeta \cap
(D^j)_R)|}{R^d}
\nonumber \\
\leq\,C\eta^{(d-p)/d}.
\label{0629d}
\end{eqnarray}
Using localization and repeatedly using 
almost-subadditivity,
as  in \cite[Remark 1 (3.5)]{Yukich-98},
yields  for some constant $C(j)$ that
\begin{equation*}
S((D^{j})_R,\zeta)\,=\,S(D^{+}_R,\zeta\cap {(D^{j})_R})\,\leq\,\sum_{i=1}^{2^{jd}} S((\hat Q^{j,i})_R,\zeta\cap {(D^{j})_R})+C(j) R^p. 
\end{equation*}
%
Since $S((\hat Q^{j,i})_R,\zeta\cap{(D^{j})_R})$ is
a constant $\tilde{S}(\emptyset)$ for $i\notin I^{j}$, this turns into
\begin{equation*}
S((D^{j})_R,\zeta)\,\leq\,\sum_{i\in I^{j} }S((\hat Q^{j,i})_R,
\zeta)+C(j) R^p. 
\end{equation*}
Hence, using \eq{0629d} and using
\eqref{eq:pr-propo-3} for each cube $\hat Q^{j,i}$, $i\in  I^{j}$, this yields
\begin{equation}
\label{eq:pr-propo-limsup}
\limsup_{R\to +\infty} 
R^{-d}
S(D_R,\zeta)
\,\leq\,
C \eta^{\frac{d-p}{d}}
+ \sum_{i\in I^j}|\hat Q^{j,i}| \overline{S}\,\leq\,
C \eta^{\frac{d-p}{d}}
+
(|D|+\eta)\overline{S}.
\end{equation}

We now turn to the lower bound.
%
%
By almost subadditivity and localization of $S$,
\begin{equation}
S(D^+_R,\zeta)\,\leq\,S(D_R,\zeta)+S(D^+_R\setminus D_R,\zeta)+CR^p.
\label{0629f}
\end{equation}
By localization and smoothness of $S$,
\begin{equation*}
|S(D^+_R\setminus D_R,\zeta)-S(D^+_R\setminus (\underline{D}^{j})_R,\zeta)| 
\,\leq\,CR^p(\card{\zeta\cap {D_R\setminus (\underline{D}^{j})_R}})^{(d-p)/d},
\end{equation*}
and by \eq{0629e} this gives us
%
\begin{equation}
\limsup_{R\to \infty}\frac{|S(D^+_R\setminus D_R,\zeta)-S(D^+_R\setminus (\underline{D}^{j})_R,\zeta)|}{R^d} \,\leq\,C\eta^{1 - p/ d}.
\label{0629g}
\end{equation}
We now treat $S(D^+_R\setminus (\underline{D}^{j})_R,\zeta)$: by
localization and almost-subadditivity,
\begin{eqnarray}
S(D^+_R\setminus (\underline{D}^{j})_R,\zeta)&=&S(D^+_R,\zeta\cap {D^+_R\setminus (\underline{D}^{j})_R}) \nonumber  \\
&\leq &\sum_{i=1}^{2^{jd}}S((\hat Q^{j,i})_R,\zeta\cap {D^+_R\setminus (\underline{D}^{j})_R})+C(j) R^p \nonumber \\
&=&\sum_{i\notin \underline{I}^{j}}S((\hat Q^{j,i})_R,\zeta)+C(j)
R^p.
\label{0629h}
\end{eqnarray}
Applying
\eq{0629f}, \eq{0629g} and \eq{0629h} in turn,
and using
\eqref{eq:pr-propo-3} for each cube $\hat Q^{j,i}$ and for $D^+$, we obtain
\begin{eqnarray}
\liminf_{R \to + \infty}
R^{-d} S(D_R,\zeta)
&
\geq
& |D^+| \overline{S} -
\limsup_{R \to + \infty}
R^{-d} S(D^+_R \setminus D_R,\zeta)
\nonumber
\\
& \geq & |D^+| \overline{S} - C \eta^{1-p/d} - 
\limsup_{R \to + \infty}
R^{-d} S(D^+_R \setminus (\underline{D}^j)_R,\zeta)
\nonumber
\\
& \geq & |D^+| \overline{S} - C \eta^{1-p/d} - 
|D^+ \setminus \underline{D}^j| \overline{S}
\nonumber
\\
& \geq & (|D|- \eta) \overline{S} - C \eta^{1-p/d}. 
\nonumber
\end{eqnarray}
%
%
%
%
Combined with \eq{eq:pr-propo-limsup}
and using
the arbitrariness of $\eta$, this
concludes the proof of
the averaging property (\ref{avging}) for $S$.

\step{5}{Conclusion}
We can now
apply Theorem~\ref{th:sub} to $S$, since in Step~1 we have shown
the insensitivity to boundary effects, and
in Step~4 we have shown the averaging property.
\end{proof}
%


\section{Application to the derivation of rubber elasticity}\label{sec:rubber-thermo}

\subsection{A discrete model for rubber and its thermodynamic limit}\label{sec:rubber}

Let $0 < \rho_1 < \rho_2 < \infty$.
Suppose that $\calL \in \calA_{\rho_1,\infty}$
and $\calL$ has at least $d+1$ elements (so its convex hull
has strictly
positive $d$-Lebesgue measure,
since $\calL$ is assumed general).
Then there is a unique Delaunay triangulation
of the convex hull of $\calL$, by simplices with edges given
by the edges of the Delaunay graph of $\calL$, which is itself the dual graph 
of the Voronoi tessellation (see \cite{Okabe-00}, and \cite {Delaunay-34} for Delaunay tessellations of $\R^d$, and for instance \cite {Fortune-95} for Delaunay
tessellations of a bounded domain).
By convention we consider simplices to be open sets.
%
Let $\calT(\calL)$ denote 
the 
Delaunay triangulation of the convex hull of $\calL$.
For any $d+1$ points of $\calL$, the
simplex generated by these points is in $\calT(\calL)$
if and only if no point of $\calL$ lies in the interior
of the circumsphere of these points.
%
We denote by $\calN(\calL)$ the associated  neighbour pairs,
that is, those unordered pairs of points $\{x,y\}$
such that $(x,y)$ is an edge of $\mathcal{T} := \mathcal{T}(\calL)$.
For all $\e>0$ and all open sets $D\in \calO(\R^d)$, 
we define 
a space
of continuous piecewise-affine functions ${\mathfrak{S}}_\e^D (\mathcal L)$ on $D$, by
\begin{equation}\label{eq:affine}
\mathfrak{S}_{\e}^D(\mathcal L):=\{ u\in C(D,\R^n)\,\big|\, \forall T\in
\mathcal{T}(\mathcal L),\, \mbox{with}\, \e T\cap D\neq\emptyset,\,
u_{|\e T\cap D}\, \mbox{is affine}\}.
\end{equation}
Let $\overline{D}$ denote the closure of $D$. 
From now on, we identify $u:\e{\mathcal
L}\cap \overline{D}\to\R^n$ with its class of piecewise-affine interpolations
(still denoted by $u$) in ${\mathfrak{S}}_\e^D (\mathcal L)\subset
W^{1,\infty}(D,\R^n)$. 
Note that the extension of $u: \e{\mathcal
L}\cap \overline{D}\to\R^n$ to $D\setminus \cup_{T\in \calT,T\subset D}T$ is not uniquely defined --- as we shall see, the energy
under consideration does not depend on the extension.
In order to define an energy functional on the set  ${\mathfrak{S}}_\e^D (\mathcal L)$, we first introduce
the following energy functions:
\begin{defi}\label{hypo:ener-SRL}
Let $p>1$.
We denote by ${\mathcal U}_p$ the subset of functions $f_{nn}$ of
$C(\R^d\times\R^n,\R^+)$ 
for which there exists 
$C>0$ 
such that, for all $z\in \R^d$ and $s \in \mathbb{R}^n$,
\begin{eqnarray}
\frac{1}{C}|s|^p-C \leq f_{nn}(z,s) \leq C(|s|^p+1).  \label{eq:gcSRL}
\end{eqnarray}
We denote by ${\mathcal V}_p$ the subset of functions $W_\vol$ of
$C(\M^{n\times d},\R^+)$ for which there exists $C>0$ 
such that for all $\Lambda\in\M^{n\times d}$,
\begin{eqnarray}\label{eq:sgc-vol-SRL}
W_{\vol}(\Lambda) \leq C(|\Lambda|^p+1).
\end{eqnarray}
\end{defi}
Let $p>1$ and $f_{nn}\in {\mathcal U}_p,W_\vol \in {\mathcal V}_p$.
For all $u\in L^1(D,\R^n)$ and open $D \in \calO(\R^d)$ we then set
\begin{equation}\label{eq:def-ener-vol2}
F_\e^D(\calL,u):=
\left\{
\begin{array}{rl}
F_{nn,\e}^D(\calL,u)+F_{\vol,\e}^D(\calL,u) &\mbox{ if }u\in  {\mathfrak{S}}_\e^D (\mathcal L),
\\
+\infty&\mbox{ else},
\end{array}
\right.
\end{equation}
where 
we set
\begin{eqnarray}\label{def:ener-nn-lr}
\quad
F_{nn,\e}^D(\calL,u)&=& \dps{ \sum\limits_{\begin{array}{cc}
\{x,y\}\in \mathcal{N}(\e \calL):
(x,y)\subset \overline{D} \end{array}} \e^d f_{nn}\left(\e^{-1}(y-x),
\frac{u(y)-u(x)}{|y-x|} \right)  }, 
\end{eqnarray}
and
\begin{eqnarray}\label{eq:def-volume}
F_{\vol,\e}^D(\calL,u)&=& \dps{ \sum\limits_{\begin{array}{cc}
T\in \mathcal{T}(\e \calL) :
T\subset D \end{array}}  |T|W_{\vol}(\nabla u_{|T}).   }
\end{eqnarray}

As announced, if $u_1,u_2\in  {\mathfrak{S}}_\e^D (\mathcal L)$ are such that 
$u_1=u_2$ on $\cup_{T\in \calT(\e \calL),T\subset D}T$, then $F^D_\e(\calL,u_1)=F^D_\e(\calL,u_2)$.

A suitable notion to study the convergence of the functional 
$F_\e^D(\calL):= F_\e^D(\calL,\cdot)$
is $\Gamma$-convergence.
We briefly recall its definition for the unfamiliar reader.
We say that a functional $F_\e:L^p(D,\R^n)\to [0,+\infty]$ $\Gamma$-converges to some functional $F:L^p(D,\R^n)\to [0,+\infty]$ if the
following two statements hold:
\begin{itemize}
\item[(i)] (liminf inequality) For all $v\in L^p(D,\R^n)$ and every sequence $v_k\in L^p(D,\R^n)$ which converges to $v$ in $L^p(D,\R^n)$
and every sequence $\e_k \to 0$, we have
\begin{equation*}
F(v)\,\leq\, \liminf_{k\to \infty}F_{\e_k}(v_k);
\end{equation*}
\item[(ii)]
(recovery sequence) For all $v\in L^p(D,\R^n)$ and every sequence $\e_k\to 0$, there exists a sequence $v_k\in L^p(D,\R^n)$ which converges to $v$ in $L^p(D,\R^n)$
and such that 
\begin{equation*}
F(v)\,= \, \lim_{k\to \infty}F_{\e_k}(v_k).
\end{equation*}
\end{itemize}
The notion of $\Gamma$-convergence is natural for minimization problems since it ensures
the convergence of minima and minimizers (recall that a set is
precompact if its closure is compact):
\begin{lemma}\cite[Section~7.1]{Braides-98}\label{lem:conv-min}
Suppose that $F_{\e_k}:L^p(D,\R^n)\to [0,+\infty]$ $\Gamma$-converges to some functional $F:L^p(D,\R^n)\to [0,+\infty]$ as $k\to \infty$.
If there exists a precompact sequence of minimizers $u_k$ of $F_{\e_k}$ in $L^p(D,\R^n)$, then
\begin{equation*}
\lim_{k\to \infty}\inf_{L^p(D,\R^n)}F_{\e_k}\,=\,\inf_{L^p(D,\R^n)}F.
\end{equation*}
Moreover, 
if $u_k \to u$ for some $u \in L^p(D,\R^n)$, then
$u$
is a minimum point for $F$.
\end{lemma}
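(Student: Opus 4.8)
The plan is to run the classical ``fundamental theorem of $\Gamma$-convergence'': obtain the upper bound $\limsup_k \inf_{L^p(D,\R^n)}F_{\e_k}\le \inf_{L^p(D,\R^n)}F$ from the recovery-sequence property (ii), the lower bound $\inf_{L^p(D,\R^n)}F\le \liminf_k \inf_{L^p(D,\R^n)}F_{\e_k}$ from the liminf inequality (i) combined with the precompactness of the minimizers, and then read off the statement about minimum points from one more application of (i) to the sequence $u_k$ itself.

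First I would prove the upper bound. Fix an arbitrary $v\in L^p(D,\R^n)$ and use (ii) with the sequence $\e_k\to 0$ to produce $v_k\to v$ in $L^p(D,\R^n)$ with $F(v)=\lim_k F_{\e_k}(v_k)$. Since each $u_k$ minimizes $F_{\e_k}$, we have $\inf_{L^p(D,\R^n)}F_{\e_k}=F_{\e_k}(u_k)\le F_{\e_k}(v_k)$, whence $\limsup_k \inf_{L^p(D,\R^n)}F_{\e_k}\le F(v)$; taking the infimum over $v$ yields $\limsup_k \inf_{L^p(D,\R^n)}F_{\e_k}\le \inf_{L^p(D,\R^n)}F$.

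Next I would prove the lower bound. Choose a subsequence $(k_j)$ along which $\inf_{L^p(D,\R^n)}F_{\e_{k_j}}$ converges to $\liminf_k \inf_{L^p(D,\R^n)}F_{\e_k}$; using precompactness of $\{u_k\}$, pass to a further subsequence (not relabelled) with $u_{k_j}\to w$ in $L^p(D,\R^n)$ for some $w$. Applying (i) to this sequence with $\e_{k_j}\to 0$ gives $F(w)\le \liminf_j F_{\e_{k_j}}(u_{k_j})=\liminf_j \inf_{L^p(D,\R^n)}F_{\e_{k_j}}=\liminf_k \inf_{L^p(D,\R^n)}F_{\e_k}$, together with $\inf_{L^p(D,\R^n)}F\le F(w)$. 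Combining the two bounds shows that $\inf_{L^p(D,\R^n)}F_{\e_k}$ converges, with limit $\inf_{L^p(D,\R^n)}F$, which is the first assertion. Finally, if the whole sequence $u_k$ converges to some $u\in L^p(D,\R^n)$, a last application of (i) gives $F(u)\le \liminf_k F_{\e_k}(u_k)=\lim_k \inf_{L^p(D,\R^n)}F_{\e_k}=\inf_{L^p(D,\R^n)}F$, so $F(u)=\inf_{L^p(D,\R^n)}F$ and $u$ is a minimum point for $F$.

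I do not expect a genuine obstacle, as the argument is routine; the only point requiring a little care is that the hypothesis supplies merely a \emph{precompact} sequence of minimizers rather than a convergent one, which is why the lower bound is first obtained along an extracted subsequence and only afterwards upgraded to a limit. The degenerate case $\inf_{L^p(D,\R^n)}F=+\infty$ (forcing $F\equiv+\infty$) is covered by the same inequalities without modification.
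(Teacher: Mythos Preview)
Your argument is correct and is exactly the standard proof of the ``fundamental theorem of $\Gamma$-convergence''. Note that the paper does not supply its own proof of this lemma: it is stated with a citation to \cite[Section~7.1]{Braides-98} and used as a black box. Your write-up matches the argument one finds in that reference, so there is nothing to compare.
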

For an introduction to $\Gamma$-convergence and its application to homogenization of integral functionals we refer the reader to \cite{Braides-02,Braides-98},
whence come the following useful definitions and
properties of integral functionals:
\begin{defi}
\label{quasidef}
We say that a function $W:\M^{n\times d}\to \R$ is \emph{quasiconvex} if for all $\Lambda \in \M^{n\times d}$,
\begin{equation*}
W(\Lambda) \,=\,\inf_u \left\{\int_Q W(\Lambda+\nabla u(x))dx\,:\,u\in W^{1,\infty}_0(D,\R^n)\right\}.
\end{equation*}
We say that it is {\em isotropic} if
$W (\Lambda \mathbf{R})=W (\Lambda) $ for all
$\Lambda \in\M^{n \times d},\mathbf{R}\in SO_d$.
We say that it satisfies a \emph{standard growth condition} of order $1<p<\infty$ if there exists a positive constant $C$ such that for all $\Lambda \in \M^{n\times d}$
\begin{equation}\label{eq:def-sgc}
C^{-1}|\Lambda|^p-C \,\leq\,W(\Lambda) \,\leq \, C(1+|\Lambda|^p).
\end{equation}

Let $D\in \calO(\R^d)$ be an open set and let $W:D\times \M^{n\times d}\to \R$ be a measurable function. We say that $W$ is {\em Carath\'eodory} if for almost every $x\in D$,
$\Lambda \mapsto W(x,\Lambda)$ is continuous.
In particular, this implies that for all $u\in W^{1,1}(D)$, the function $x\mapsto W(x,\nabla u(x))$ is measurable on $D$.

If in addition there exist $1<p<\infty$ and $C>0$ such that for almost every $x\in D$, $\Lambda \mapsto W(x,\Lambda)$ is quasiconvex
and satisfies the standard growth condition \eqref{eq:def-sgc}, then the integral functional $F:W^{1,p}(D)\to \R,  u\mapsto F(u)=\int_DW(x,\nabla u(x))dx$
is finite and lower-semicontinuous for the weak topology of  $W^{1,p}(D)$.

For $\Lambda \in \M^{n\times d}$,
define the function 
$\varphi_\Lambda:\R^d\to \R^n$ by $x \mapsto \Lambda x$.
\end{defi}

Recall that $Q_R :=(-R,R)^d$ for all $R>0$.
In \cite{Alicandro-Cicalese-Gloria-07b}, Alicandro, Cicalese and the first author proved the following $\Gamma$-convergence (or discrete homogenization) result:
\begin{theorem}\cite[Theorem~5]{Alicandro-Cicalese-Gloria-07b}\label{th:main-gen}
Let $0 < \rho_1 < \rho_2 < \infty$.
Let $\mathcal{L}$ be a stationary and ergodic stochastic lattice in $\calA_{\rho_1,\rho_2}$.
Let $p>1$ and let $f_{nn}$  and $W_\vol$ be of 
class $\mathcal{U}_p$ and $\mathcal{V}_p$, respectively.
Let $D\in \calO(\R^d)$ be an open set, and let 
$F_\e^D(\calL)$ be the energy functional given by
(\ref{eq:def-ener-vol2}). 
Then  the functionals $F_\e^D(\calL)$ almost surely
$\Gamma$-converge as $\e\to 0$ to the deterministic integral
functional $F_{\ho}^D:L^{p}(D,\R^n)\to[0,+\infty]$ defined by
\begin{equation}
F_{\ho}^D(u)=\begin{cases}\int_D W_{\ho}(\nabla
u(x))\ud x & \text{if } u\in W^{1,p}(D,\R^n), \cr +\infty
&\text{otherwise},
\end{cases}
\end{equation}
where $W_{\ho}:\M^{n\times d}\to \R^+$ is a quasiconvex function which depends only on $f_{nn}$, $W_\vol$, and on the stochastic lattice, and which
satisfies a standard growth condition \eqref{eq:def-sgc} of order $p$.
Also for all $\Lambda \in \M^{n \times d}$, $W_{\ho}$ 
satisfies the following asymptotic homogenization formula almost surely: 
\begin{multline}\label{eq:asymp-fo-de}
W_{\ho}(\Lambda)=\lim_{R \to \infty} \frac{1}{|Q_R|}\inf_{u} \bigg\{
F_1^{Q_R}(\calL,u)\ \bigg| \ u\in {\mathfrak{S}}_1^{Q_R}(\calL), ~
u \equiv \varphi_\Lambda ~{\rm on}~ \calL \cap Q_R \setminus Q_{R- 2 \rho_2 }
\bigg\}.
\end{multline}
\end{theorem}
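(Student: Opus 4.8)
The statement is exactly \cite[Theorem~5]{Alicandro-Cicalese-Gloria-07b}, so strictly nothing new needs proving here; nonetheless let me outline the argument, since its architecture --- a cell formula fed into a subadditive ergodic theorem --- is the one we adapt in Section~\ref{sec:rubber-thermo} to lattices generated on bounded sets.

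First I would establish \emph{equi-coercivity}. Using the lower bound $\tfrac{1}{C}|s|^p-C\le f_{nn}(z,s)$ from \eqref{eq:gcSRL} together with the hard-core ($\rho_1$) and non-empty-space ($\rho_2$) conditions on $\calL$ --- which force every Delaunay simplex of $\e\calL$ to have diameter and inradius of order $\e$, uniformly --- one bounds $\|\nabla u\|_{L^p(D)}$ by $F_\e^D(\calL,u)$ for $u\in\mathfrak{S}_\e^D(\calL)$; the volumetric term $F_{\vol,\e}^D\ge 0$ is harmless here. After anchoring (say, fixing the mean) the Rellich--Kondrachov theorem gives precompactness in $L^p(D,\R^n)$ of sequences with bounded energy, with limits in $W^{1,p}(D,\R^n)$; hence the $\Gamma$-limit is $+\infty$ off $W^{1,p}$.

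Next I would introduce the \emph{cell problem}: for $\Lambda\in\M^{n\times d}$ and a large cube $Q$, let $m_\Lambda(Q)$ be the infimum of $F_1^Q(\calL,u)$ over $u\in\mathfrak{S}_1^Q(\calL)$ equal to $\varphi_\Lambda$ at the points of $\calL\cap Q$ within distance $2\rho_2$ of $\partial Q$, as in \eqref{eq:asymp-fo-de}. The boundary layer lets near-optimal competitors on disjoint cubes be glued without illegal Delaunay edges; by the non-empty-space condition it has bounded width, hence negligible volume as $Q\uparrow\R^d$. Consequently $Q\mapsto m_\Lambda(Q)$ is subadditive up to a negligible error, and $0\le m_\Lambda(Q)\le C(|\Lambda|^p+1)|Q|$, so the Akcoglu--Krengel subadditive ergodic theorem \cite{Akcoglu-Krengel-81} applies; its conclusion is the \emph{existence} of the limit, and ergodicity of $\calL$ makes $W_\ho(\Lambda):=\lim_{R\to\infty}|Q_R|^{-1}m_\Lambda(Q_R)$ deterministic almost surely --- this is \eqref{eq:asymp-fo-de}. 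The bounds \eqref{eq:gcSRL}--\eqref{eq:sgc-vol-SRL} pass to \eqref{eq:def-sgc} for $W_\ho$ (the upper bound by testing the cell problem with $\varphi_\Lambda$, the lower bound by the coercivity estimate above), and quasiconvexity of $W_\ho$ follows from lower semicontinuity of the $\Gamma$-limit, or directly from the cell formula by a covering argument.

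Finally I would prove the two $\Gamma$-convergence inequalities. For the \textbf{recovery sequence} I would first take $u=\varphi_\Lambda$, rescaling near-optimizers of the cell problem; then piecewise-affine $u$, by patching affine recovery sequences over a fine triangulation, the boundary layers ensuring compatibility and contributing vanishing energy; then general $u\in W^{1,p}$ by density of piecewise-affine maps, the growth bound on $W_\ho$, and a diagonal argument. For the \textbf{$\liminf$ inequality}, given $u_{\e_k}\to u$ in $L^p$ with bounded energies, pass to $u_{\e_k}\rightharpoonup u$ in $W^{1,p}$, view the energies as measures, extract a weak-$*$ limit $\mu$, and show by a blow-up argument that the density of the absolutely continuous part of $\mu$ at a.e.\ $x$ is at least $W_\ho(\nabla u(x))$, the blow-up rescaling a small cube around $x$ into a large reference cube where the cell formula applies. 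The hard part will be this $\liminf$ inequality: one must control the discrete energy of an \emph{arbitrary} low-energy configuration from below by the cell formula, handling the randomness of the Delaunay geometry simultaneously with the blow-up, and arguing that the boundary-layer constraint in the definition of $m_\Lambda$ does not genuinely restrict the infimum when bounding from below --- this is exactly where the non-empty-space condition and the subadditive ergodic theorem work in tandem.
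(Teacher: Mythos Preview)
Your opening sentence is exactly right: the paper does not prove this theorem at all --- it is simply quoted from \cite{Alicandro-Cicalese-Gloria-07b}, and the paper's ``proof'' consists entirely of the citation. So there is nothing in the paper to compare your outline against, and your acknowledgment of this is the correct response.

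That said, your sketch of the argument in \cite{Alicandro-Cicalese-Gloria-07b} is broadly accurate but differs in architecture from what that reference actually does. You propose a direct route: equi-coercivity, then the cell formula via Akcoglu--Krengel, then a recovery sequence built by patching rescaled near-minimizers over a fine triangulation, and finally a $\liminf$ inequality by blow-up. The actual proof in \cite{Alicandro-Cicalese-Gloria-07b} (as the present paper notes in the proof of Lemma~\ref{lem:compact}) instead goes through an abstract $\Gamma$-compactness step: one first shows, via the De Giorgi--Letta criteria and the Buttazzo--Dal Maso integral representation theorem, that any $\Gamma$-cluster point of $F_\e^D(\calL)$ is an integral functional $u\mapsto\int_D W(x,\nabla u)\,dx$ for some Carath\'eodory $W$; only then is the density $W(x,\cdot)$ identified with $W_\ho$ by the subadditive ergodic theorem applied to the cell problem. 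The recovery sequence with boundary conditions is handled not by your patching argument but by De Giorgi's averaging (slicing) method, as the paper itself remarks in the proof of Lemma~\ref{lem:BC}. Both routes are valid; the integral-representation route has the advantage that one never has to build global recovery sequences by hand, while your direct route is perhaps more transparent about where each hypothesis is used.
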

For the link between the above result and the derivation of rubber elasticity from the statistical physics of interacting polymer-chains, we refer
the reader to \cite[Section~4.1]{Alicandro-Cicalese-Gloria-07b}, \cite{Gloria-LeTallec-Vidrascu-08b}, and the references therein.
Note that the combination with Lemma~\ref{lem:conv-min} yields the convergence of minimum problems.

\medskip

Note that in \eqref{eq:asymp-fo-de}, the lattice $\calL$ and the domain $Q_R$ correspond to $\xi$ and $D_R$ in Theorem~\ref{th:sub}, respectively.
Yet, as we shall see, the function $(\calL,D_R)\,\mapsto \,S(\calL,D_R)$ defined by
\begin{multline*}
S(\calL,D_R) := \inf_{u} \bigg\{
F_1^{D_R}(\calL,u)\ \bigg| \ u\in {\mathfrak{S}}_1^{D_R}(\calL), ~
u \equiv \varphi_\Lambda \mbox{ on } \calL \cap D_R \setminus D_{R- 2 \rho_2} 
\bigg\},
\end{multline*}
is not local in the sense of Definition~\ref{defi:sub}.

\subsection{Isotropic homogenized energy density and approximation result}\label{sec:ass-SET}

Let $0 < \rho_1 < \rho_0 < \rho_2 < \infty$, and
let $\xi$ denote the random parking measure of parameter $\rho_0$ in $\R^d$.
Also, let $\xi^R : = \xi^{Q_R}$ be the random parking measure of
parameter $\rho_0$ in the bounded domain $Q_R$.
Let $p>1$ and let $f_{nn}$  and $W_\vol$ be of 
class $\mathcal{U}_p$ and $\mathcal{V}_p$, respectively.

Proposition~\ref{prop:iso-ergo} answers the first question of the introduction: there does exist an ergodic admissible stochastic lattice which
is statistically isotropic, namely $\xi$:
%
\begin{theorem}\label{th:isotropy}
Suppose $n=d$.  
Suppose there exists
$\tilde f_{nn}:\R^+\times \R^d\to \R^+$ such that 
\begin{equation}\label{eq:hypo-isot}
\begin{array}{ll}
f_{nn}(z_1,z_2)=\tilde {f}_{nn}(|z_1|,z_2) & \forall \, z_1,z_2\in\R^d,
\end{array}
\end{equation}
and suppose also that $W_\vol$ is isotropic.
Then the energy density $W_{\ho}$ defined by the asymptotic formula 
\eqref{eq:asymp-fo-de} (with $\calL = \xi$ and
$F_\e^{Q_R}$  given by \eqref{eq:def-ener-vol2})
is well-defined and isotropic.
\end{theorem}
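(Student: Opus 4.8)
The plan is to derive both claims from Theorem~\ref{th:main-gen} and Proposition~\ref{prop:iso-ergo}, the isotropy being obtained by a change of variables in the cell formula~\eqref{eq:asymp-fo-de}. For \emph{well-definedness}: by Proposition~\ref{prop:iso-ergo} the measure $\xi$ is stationary, ergodic and almost surely general; moreover at saturation no ball of radius $\rho_0$ can be inserted, so every point of $\R^d$ lies within distance $2\rho_0$ of a point of $\xi$, whence $\xi$ is almost surely $(\rho_1,\rho_2)$-admissible (taking $\rho_2\geq 2\rho_0$). Thus $\xi$ almost surely belongs to $\calA_{\rho_1,\rho_2}$, Theorem~\ref{th:main-gen} applies with $\calL=\xi$, and the limit in~\eqref{eq:asymp-fo-de} exists almost surely, is deterministic, and defines a quasiconvex $W_\ho$ obeying a standard growth condition of order $p$.

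For \emph{isotropy}, fix $\Lambda\in\M^{d\times d}$ and $\mathbf R\in SO_d$; the goal is $W_\ho(\Lambda\mathbf R)=W_\ho(\Lambda)$. Writing $L_R:=Q_R\setminus Q_{R-2\rho_2}$, I would consider the map $u\mapsto u\circ\mathbf R$ from the admissible deformations of the cell problem for $\Lambda$ on $(\xi,Q_R)$ to those of the cell problem for $\Lambda\mathbf R$ on $(\mathbf R^{T}\xi,\mathbf R^{T}Q_R)$; it is a bijection, with inverse $v\mapsto v\circ\mathbf R^{T}$. Indeed the Delaunay triangulation, its edge set $\calN$, and the relations ``$T\subset Q_R$'' and ``segment $\subset\overline{Q_R}$'' are all equivariant under the isometry $\mathbf R$, so $\mathbf R^{T}$ carries $\calT(\xi)$, $\calN(\xi)$ and the relevant inclusions to those of $\mathbf R^{T}\xi$ in $\mathbf R^{T}Q_R$; and since $\varphi_\Lambda\circ\mathbf R=\varphi_{\Lambda\mathbf R}$, the constraint $u\equiv\varphi_\Lambda$ on $\xi\cap L_R$ becomes $u\circ\mathbf R\equiv\varphi_{\Lambda\mathbf R}$ on $\mathbf R^{T}\xi\cap\mathbf R^{T}L_R$. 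This map preserves $F_1$: each edge factor $f_{nn}(y-x,\cdot)$ is unchanged because $|\mathbf R^{T}(y-x)|=|y-x|$ and, by~\eqref{eq:hypo-isot}, $f_{nn}$ depends on its first argument only through its norm; and each simplex term is unchanged because $|\mathbf R^{T}T|=|T|$, $\nabla(u\circ\mathbf R)|_{\mathbf R^{T}T}=(\nabla u|_T)\mathbf R$, and $W_\vol$ is isotropic. Hence
\[
\inf\bigl\{F_1^{Q_R}(\xi,u)\ :\ u\equiv\varphi_\Lambda\ \text{on}\ \xi\cap L_R\bigr\}
=\inf\bigl\{F_1^{\mathbf R^{T}Q_R}(\mathbf R^{T}\xi,v)\ :\ v\equiv\varphi_{\Lambda\mathbf R}\ \text{on}\ \mathbf R^{T}\xi\cap\mathbf R^{T}L_R\bigr\}.
\]

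Dividing by $|Q_R|=|\mathbf R^{T}Q_R|$ and letting $R\to\infty$, the left-hand side tends almost surely to $W_\ho(\Lambda)$ by~\eqref{eq:asymp-fo-de}. For the right-hand side, $\mathbf R^{T}\xi$ has the same law as $\xi$ (isotropy of $\xi$, Proposition~\ref{prop:iso-ergo}), hence is again stationary, ergodic and admissible with homogenized density $W_\ho$, and $\mathbf R^{T}Q_R=(\mathbf R^{T}Q_1)_R$ with $\mathbf R^{T}Q_1\in\calO(\R^d)$; since such a normalised cell limit is almost surely constant, depends only on the law of the underlying lattice, and is unchanged when the cubes $Q_R$ are replaced by the expanding family $\{(\mathbf R^{T}Q_1)_R\}$, the right-hand side tends almost surely to $W_\ho(\Lambda\mathbf R)$. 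This yields $W_\ho(\Lambda)=W_\ho(\Lambda\mathbf R)$ for every $\mathbf R\in SO_d$, i.e.\ $W_\ho$ is isotropic. The main obstacle is precisely this last point: one must justify that the limit in~\eqref{eq:asymp-fo-de} is unchanged when $Q_R$ is replaced by the rotated cube $\mathbf R^{T}Q_R$ (equivalently, by $D_R$ for a general $D\in\calO(\R^d)$) --- this is the content of the approximation statement of this section, which ultimately rests on Theorem~\ref{th:sub}; granting it, the change of variables together with the isotropy and ergodicity of $\xi$ supply the rest.
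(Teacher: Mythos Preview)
Your argument for well-definedness matches the paper's exactly. For isotropy, the paper's proof is a one-line citation: it invokes \cite[Theorem~9]{Alicandro-Cicalese-Gloria-07b} together with the isotropy of $\xi$ from Proposition~\ref{prop:iso-ergo}. You instead unpack the change-of-variables argument directly, which is presumably what that cited theorem does; your bijection $u\mapsto u\circ\mathbf R$ and the verification that both the edge and volume contributions are preserved under it are correct and cleanly explained.

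The one point to flag is your resolution of the obstacle you correctly identify. You need the normalized cell limit to be unchanged when $Q_R$ is replaced by the rotated cube $(\mathbf R^{T}Q_1)_R$, and you attribute this to ``the approximation statement of this section, which ultimately rests on Theorem~\ref{th:sub}''. That is not quite the right reference: Theorem~\ref{th:sub} is about replacing $\xi$ by $\xi^{D_R}$ on a \emph{fixed} family $D_R$, not about the domain-independence of the cell formula for $\xi$ itself. The domain-independence you need is a consequence of the $\Gamma$-convergence result of Theorem~\ref{th:main-gen} (which is stated for arbitrary $D\in\calO(\R^d)$) combined with the convergence of infima in Lemma~\ref{lem:BC}; equivalently, it is built into the subadditive ergodic machinery of \cite{Alicandro-Cicalese-Gloria-07b} that underlies the formula~\eqref{eq:asymp-fo-de}. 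Once this is granted, your argument goes through.
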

\begin{proof}[Proof]
Putting $\calL =\xi$ gives $\calL$ satisfying the conditions
of Theorem~\ref{th:main-gen}, so
the existence of $W_\ho$ is ensured by
that result.
The isotropy of $W_\ho$ is a direct consequence of the isotropy of $\xi$ in Proposition~\ref{prop:iso-ergo} and 
of \cite[Theorem~9]{Alicandro-Cicalese-Gloria-07b}.
\end{proof}
We now turn to a first version of the second question: in the case of the random parking measure, does the asymptotic formula \eqref{eq:asymp-fo-de}
hold if $\calL$ is replaced by $\xi^R$?
This question is particularly relevant for numerical approximations (see \cite{Gloria-LeTallec-Vidrascu-08b}), since
the approximation of $W_\ho$ on a bounded domain $Q_R$ also requires
an approximation of the random measure $\xi$ on the domain $Q_R$.
The answer is positive. 

Before we state the result proper, let us provide more details on the approximation procedure.
In Theorem~\ref{th:main-gen}, the infimum in \eqref{eq:asymp-fo-de} 
uses the Delaunay tessellation associated with the point process $\calL$,
which is slightly nonlocal (the restriction of the Delaunay tessellation to
$Q_R$ depends on points of $\calL$
that lie outside $Q_R$, although in some bounded annulus around $Q_R$ due to the non-empty space condition).
Hence, for the approximation process we need to define a local version of \eqref{eq:asymp-fo-de} 
through the introduction of some tessellation depending only on $\calL$ in $Q_R$.
A natural choice would be to consider the Delaunay tessellation associated with $\calL\cap{Q_R}$. This is however not suitable
for the following reason: the edge lengths of this Delaunay tessellation are a priori only bounded by $R$ --- the Delaunay
tessellation of a point set is a tessellation of the convex hull of the point set into $d$-simplices, so that
there exist with positive probability configurations which have long edges. 
Arbitrarily large edge lengths are incompatible with the modeling of polymer chains (see  \cite[Section~4.1]{Alicandro-Cicalese-Gloria-07b} and \cite{Gloria-LeTallec-Vidrascu-08b}: in the case of unbounded edges, the associated  functions $f_{nn}$ are not of class $\mathcal{U}_p$, see Definition~\ref{hypo:ener-SRL}).
We deal with this issue by considering our energy functional
on a slightly smaller
region than the cube $Q_R$. 

\medskip
We now introduce an approximation of $W_\ho$ on $Q_R$ using $\xi^R$.
The main result of this subsection is the following
approximation result:
\begin{theorem}\label{th:approx1}
For all $\Lambda\in \M^{n\times d}$, 
the energy density $W_\ho$ defined in \eqref{eq:asymp-fo-de} 
(with $\mathcal{L}=\xi$) satisfies  almost surely the identity
\begin{equation*}
W_{\ho}(\Lambda)=\lim_{R \to \infty} 
\frac{1}{|Q_R|}\inf_u \left\{
F^{Q_{R- 2 \rho_2}}_1( \xi^R,u)
\ \bigg| \ u\in {\mathfrak S}^{Q_R}_1(\xi^R),u
\equiv \Lambda \mbox{ on } 
Q_{R- 2 \rho_2} \setminus Q_{R- 4 \rho_2}
\right\}.
\end{equation*}
\end{theorem}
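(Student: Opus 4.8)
The plan is to recognize the right-hand side as $R^{-d}S(D_R,\xi^{D_R})$ for a suitable choice of $D$ and $S$, and then apply Theorem~\ref{th:sub}. Take $D=Q_1=(-1,1)^d$, so that $D_R=Q_R$ and $D_{R,r}=Q_{R-r}$. For $\hat D=Q_{R-r}\in\calD(D)$ and an admissible point set $\zeta$, define
$$
S(\hat D,\zeta):=\inf_u\Big\{F_1^{\hat D_{-2\rho_2}}(\zeta,u)\ \Big|\ u\in\mathfrak{S}_1^{\hat D}(\zeta),\ u\equiv\varphi_\Lambda\text{ on }\hat D_{-2\rho_2}\setminus\hat D_{-4\rho_2}\Big\},
$$
where $\hat D_{-t}$ abbreviates the inner parallel set at distance $t$; for $\hat D=Q_{R-r}$ this is exactly $Q_{R-r-t}$. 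Thus the right-hand side of the theorem is $|Q_R|^{-1}S(Q_R,\xi^{R})$, and the content of Theorem~\ref{th:approx1} is the two equalities in \eq{av2} together with the identification of the limit constant $\overline S$ with $W_\ho(\Lambda)$. Note this $S$ is built only from $\zeta$ restricted to (a neighborhood inside) $\hat D$ because the Delaunay tessellation $\calT(\zeta)$ is used only on the smaller region $\hat D_{-2\rho_2}$: by the non-empty space condition with constant $\rho_2$, every Delaunay simplex meeting $\hat D_{-2\rho_2}$ has all its vertices within distance $2\rho_2$ of $\hat D_{-2\rho_2}$, hence inside $\hat D$. This is precisely why one must shrink by $2\rho_2$, as discussed in the paragraph preceding the theorem.

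The steps, in order. (1) \emph{Locality.} Using the observation above, $S(\hat D,\zeta)$ depends only on $\zeta\cap\hat D$, so $S$ is local on $\calD(Q_1)$ in the sense of Definition~\ref{defi:sub}. (2) \emph{Insensitivity to boundary effects.} Here one compares $S(Q_R,\zeta)$ and $S(Q_{R-R^\alpha},\zeta)$. The admissible competitors for the two problems differ only in a boundary shell of width $O(R^\alpha)$, where by the hard-core condition there are $O(R^{(d-1)+\alpha})=o(R^d)$ points, hence $o(R^d)$ Delaunay edges and simplices; using the growth bounds \eqref{eq:gcSRL}, \eqref{eq:sgc-vol-SRL} and the affine boundary datum $\varphi_\Lambda$ (which has energy density controlled by $C(|\Lambda|^p+1)$ on each shell simplex) one gluing argument bounds the difference of the two infima by $o(R^d)$, uniformly in $\zeta$; this gives \eqref{insensalpha} for any $\alpha\in(\tfrac{d-1}{d},1)$. (3) \emph{Averaging property.} For a stationary ergodic admissible lattice $\zeta$, the quantity $R^{-d}S(Q_R,\zeta)$ converges a.s.\ to a deterministic constant: this is essentially the content of the homogenization formula \eqref{eq:asymp-fo-de} in Theorem~\ref{th:main-gen}. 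Indeed $S(Q_R,\zeta)$ differs from the infimum appearing in \eqref{eq:asymp-fo-de} (with the window $Q_R\setminus Q_{R-2\rho_2}$ replaced by $Q_{R-2\rho_2}\setminus Q_{R-4\rho_2}$) only by a $o(R^d)$ boundary-shell correction of the same kind as in Step~(2), so the two infima have the same limit, namely $W_\ho(\Lambda)$; hence $\overline S(\zeta)=W_\ho(\Lambda)$. (In particular, when $\zeta=\xi$ is the random parking measure, which is stationary and ergodic by Proposition~\ref{prop:iso-ergo}, this already yields the second equality $\lim_R R^{-d}S(Q_R,\xi)=W_\ho(\Lambda)$.) (4) \emph{Conclusion.} With locality, insensitivity, and the averaging property verified, Theorem~\ref{th:sub} gives that $R^{-d}S(Q_R,\xi^{R})$ also converges a.s.\ to the same $\overline S=W_\ho(\Lambda)$, which is the claimed identity.

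The main obstacle is Step~(2), the insensitivity estimate, which is the one genuinely new computation: unlike the Euclidean-functional setting of Theorem~\ref{th:umbrella}, here $S$ is neither subadditive nor given by a simple local formula — it is an infimum of a constrained energy — so one cannot directly quote a smoothness property. The argument requires constructing, from a near-optimal competitor for the larger cube, an admissible competitor for the smaller cube (and vice versa) by modifying it only in the boundary shell so that it matches $\varphi_\Lambda$ on the relevant annulus; controlling the energy cost of this modification needs the two-sided growth of $f_{nn}$ together with the upper growth of $W_\vol$, the hard-core bound on the point count in the shell, and a quantitative control (via the non-empty space condition) on the number and aspect of Delaunay simplices straddling the shell. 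A subtlety is that the competitor must lie in $\mathfrak{S}_1^{\hat D}(\zeta)$, i.e.\ be affine on each rescaled Delaunay simplex of $\zeta$ meeting $\hat D$; one must check the interpolation/gluing produces such a function with the stated energy bound. The same modification lemma is reused, essentially verbatim, in Step~(3) to pass between the window $Q_R\setminus Q_{R-2\rho_2}$ of \eqref{eq:asymp-fo-de} and the window $Q_{R-2\rho_2}\setminus Q_{R-4\rho_2}$ of $S$, so proving it carefully once settles both places.
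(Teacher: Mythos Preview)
Your overall architecture is right and matches the paper: define $S$ on $\calD(Q)$, check locality, insensitivity to boundary effects, and the averaging property, then invoke Theorem~\ref{th:sub}. Locality (Step~1) is exactly the paper's argument. For Step~3, the paper likewise reads off the averaging property directly from Theorem~\ref{th:main-gen} (the passage between the window $Q_R\setminus Q_{R-2\rho_2}$ and the shifted window costs only $O(R^{d-1})$, as the paper notes via the auxiliary functional $\tilde S^{4\rho_2}$).

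The genuine gap is Step~2. The ``one gluing argument'' you sketch does not go through as stated. One direction is indeed trivial (extend a competitor for $Q_{R-R^\alpha}$ by $\varphi_\Lambda$). For the hard direction you must modify a near-minimizer $u$ for $Q_R$ so that $u\equiv\varphi_\Lambda$ in a shell of width $O(1)$ sitting at depth $R^\alpha$. A De~Giorgi slicing in the shell of width $R^\alpha$ with $N$ layers produces a gluing cost on the chosen layer of order
\[
\frac{C}{N}\Big(\int_{\text{shell}}|\nabla u|^p + \Big(\frac{N}{R^\alpha}\Big)^{p}\int_{\text{shell}}|u-\varphi_\Lambda|^p\Big).
\]
Even using the shell Poincar\'e inequality (constant $\sim (R^\alpha)^p$, since $u-\varphi_\Lambda$ vanishes at the outer edge) and the lower $p$-growth of $f_{nn}$ to bound $\int_{\text{shell}}|\nabla u|^p\leq C R^d$, the second term is of order $N^{p-1}R^d$, so the total is $\geq C R^d\min_N(N^{-1}+N^{p-1})$, which is $O(R^d)$ and not $o(R^d)$ for any choice of $N$. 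The ingredients you list (two-sided growth of $f_{nn}$, hard-core count of shell points, simplex geometry) do not supply the missing pointwise control of $|u-\varphi_\Lambda|$ in the shell; the shell is simply too thin relative to $R$ for a direct cutoff argument. The range $\alpha\in(\tfrac{d-1}{d},1)$ you quote does not arise from any such computation.

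The paper circumvents this by an indirect $\Gamma$-compactness argument. Writing $\tilde S^r(Q_R,\zeta)=\inf\{F_1^{Q_R}(\zeta,u):u\in\mathfrak S_1^{Q_R,\Lambda,r}\}$, one has the easy identity $\tilde S^{R^\alpha+4\rho_2}(Q_R,\zeta)=\tilde S^{4\rho_2}(Q_{R-R^\alpha},\zeta)+O(R^{d-1+\alpha})$ (same competitors, the excess is just the energy of $\varphi_\Lambda$ on the strip). So insensitivity reduces to showing that the boundary-layer width $r$ is asymptotically irrelevant: $\tilde S^{4\rho_2}(Q_R,\zeta)-\tilde S^{R^\alpha+4\rho_2}(Q_R,\zeta)=o(R^d)$. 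After rescaling to $Q=Q_1$, the paper extracts via Lemma~\ref{lem:compact} a subsequence along which the energies $\Gamma$-converge to some $\int W(x,\cdot)$, and then Lemma~\ref{lem:BC} shows that for every fixed $r\geq 2\rho_2$ the constrained infima converge to the \emph{same} continuous Dirichlet problem $\inf_{W^{1,p}_0(Q)}\int_Q W(x,\Lambda+\nabla v)$. A sandwich between the problem on $Q$ with thin layer and the problem on a fixed slightly smaller cube $Q_{1-\eta}$ (plus the $\varphi_\Lambda$-energy on $Q\setminus Q_{1-2\eta}$, bounded by Lemma~\ref{lem:unif-bd}) then lets one pass the layer width $r_k\to\infty$ (but $\e_k r_k\to 0$) through the limit, and $\eta\to 0$ closes the gap. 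Since every subsequence admits such a further subsequence, the full sequence satisfies insensitivity. This is where the real work of the proof lies, and it is not a cutoff argument; you should replace your Step~2 with this compactness route.
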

As we shall see,
Theorem~\ref{th:approx1} follows from Theorem~\ref{th:sub} applied to 
(some function related to) the infimum in \eqref{eq:asymp-fo-de}, seen
as a function of random measures and sets.
%
%
\begin{remark}
Other approximations of $W_\ho$ using the random parking measure on a bounded domain can be used instead of the one presented in Theorem~\ref{th:approx1}.
Let us mention two alternatives.

First, instead of restricting the energy to the domain $Q_{R-2\rho_2}$, one may consider the energy on the whole domain $Q_R$
provided we modify the random parking measure on $Q_R\setminus Q_{R-2\rho_2}$, typically by taking $\tilde \xi^R:=(\xi^R \cap Q_{R-2\rho_2})\cup \zeta^R$
where $\zeta^R$ is a suitable point set in $\partial Q_R$ (say deterministic). The point set $\zeta^R$ can be chosen so that the convex envelope of $\tilde \xi^R$
is $Q_R$, and such that any associated Delaunay triangulation of $Q_R$ has edge lengths bounded by $4\rho_2$. 
A result similar to Theorem~\ref{th:approx1} can be proved in this case, when the boundary conditions are imposed on $\zeta^R$ only (which greatly simplifies the
practical implementation, see \cite{Gloria-LeTallec-Vidrascu-08b}).

The second alternative is an adaptation of the popular ``periodization method'' in homogenization.
Recall that $\calP$ denotes a space-time Poisson process.
For all $R>0$ we define the $Q_R$-periodization of $\calP$ as
$\calP^R_\#:=\cup_{k\in \Z^d}(kR+\calP\cap (Q_R \times \R^+))$,
and let $\xi^R_\#$ be the output of the graphical construction of Subsection~\ref{sec:parking} associated with $\calP^R_\#$ in place of $\calP$.
We consider a $Q_R$-periodic tessellation $\calT^R_\#$ of $\R^d$ associated with $\xi^R_\#$.
This allows us to define a set of $Q_R$-periodic functions on $\R^d$:
\begin{equation*}
\mathfrak{S}_{\#}^R(\xi^R_\#) :=\{ u\in C(\R^d,\R^n)\,\big|\, u \mbox{ is }Q_R\mbox{-periodic}, \forall T\in
\calT(\xi^R_\#),\, u_{|T}\, \mbox{is affine}\}.
\end{equation*}
We then let $C^R_\#$ be a minimal connected periodic cell obtained as the union
of simplices of $\calT(\xi^R_\#)$.
Although we have not checked all the details, one should have that
\begin{multline*}
W_{\ho}(\Lambda)
\,=\,\lim_{R \to \infty} \frac{1}{|Q_R|}\inf_{u} \bigg\{
F_1^{C^R_\#}(\xi^R_\#,u)\ \bigg| \ u\in {\mathfrak{S}}_{\#}^{R}(\xi^R_\#) \\ \mbox{ such that } \frac{1}{|C^R_\#|}\int_{C^R_\#} \nabla u(x)dx=\Lambda \bigg\}
\end{multline*}
almost surely. 
In view of the results of \cite{Gloria-Neukamm-Otto-12} on the periodization method on a simplified model (discrete linear elliptic equation on $\Z^d$
with independent and identically distributed conductances), this approach (although it is much more delicate to implement in practice) should yield better convergence rates than the first two.
\end{remark}

The proof of
Theorem~\ref{th:approx1} requires a series of
lemmas.
The first of these provides elementary control 
of energy functionals for homogeneous deformations:
\begin{lemma}\label{lem:unif-bd}
There exists $C>0$ (depending only on $d, n, \rho_1,\rho_2$, $p$,
$f_{nn}$ and $W_\vol$),
such that for all $\zeta \in \calA_{\rho_1,\rho_2}$,
$D\in \calO(\R^d)$, all $\e>0$ and all $\Lambda \in \M^{n\times d}$,
$F_\e^D$  (given by \eqref{eq:def-ener-vol2}) satisfies
\begin{equation*}
0\,\leq\, F_\e^D(\zeta,\varphi_\Lambda)\,\leq\,C|D|(1+|\Lambda|^p).
\end{equation*}
%
\end{lemma}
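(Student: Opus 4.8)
The plan is to bound the two pieces of $F_\e^D(\zeta,\varphi_\Lambda)=F_{nn,\e}^D(\zeta,\varphi_\Lambda)+F_{\vol,\e}^D(\zeta,\varphi_\Lambda)$ separately, using the growth conditions \eqref{eq:gcSRL} and \eqref{eq:sgc-vol-SRL} together with the hard-core and non-empty space constraints encoded in $\zeta\in\calA_{\rho_1,\rho_2}$. The key geometric fact is that both the number of Delaunay edges and the number of Delaunay simplices of $\e\zeta$ meeting a region of volume $|D|$ are controlled by $|D|$ up to a constant depending only on $d,\rho_1,\rho_2$; this is where the admissibility of $\zeta$ enters. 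First I would record this combinatorial bound. Because $\zeta$ is $(\rho_1,\rho_2)$-admissible, in any ball of radius comparable to $\rho_2$ there are between one and $C(d,\rho_1,\rho_2)$ points; since every Delaunay edge of $\e\zeta$ has length at most $2\e\rho_2$ (the circumsphere of any Delaunay simplex contains no points of $\e\zeta$, so its radius is at most $\e\rho_2$, by the non-empty space condition) and every Delaunay simplex has diameter at most $2\e\rho_2$, a standard covering argument gives
\[
\card\{\{x,y\}\in\calN(\e\zeta):(x,y)\subset\overline D\}\,\le\,C\e^{-d}|D|,\qquad
\card\{T\in\calT(\e\zeta):T\subset D\}\,\le\,C\e^{-d}|D|,
\]
with $C=C(d,\rho_1,\rho_2)$, where we also use that each edge/simplex contributing lies within distance $2\e\rho_2$ of $D$, a region of volume $\le C|D|$ once $\e$ is normalized out (alternatively one inflates $D$ by $2\e\rho_2$, which costs only a constant factor since $D\in\calO(\R^d)$ is Lipschitz, or simply works with the $\e\rho_2$-neighbourhood).

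Next I would estimate $F_{nn,\e}^D(\zeta,\varphi_\Lambda)$. For $u=\varphi_\Lambda$ and an edge $\{x,y\}$ of $\calN(\e\zeta)$ we have $\frac{u(y)-u(x)}{|y-x|}=\Lambda\frac{y-x}{|y-x|}$, a unit-modulus argument of $\Lambda$, so $\bigl|\frac{u(y)-u(x)}{|y-x|}\bigr|\le|\Lambda|$. Using the upper bound in \eqref{eq:gcSRL}, each term $\e^d f_{nn}\bigl(\e^{-1}(y-x),\frac{u(y)-u(x)}{|y-x|}\bigr)$ is at most $\e^d C(|\Lambda|^p+1)$. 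Multiplying by the edge-count bound $C\e^{-d}|D|$ gives $F_{nn,\e}^D(\zeta,\varphi_\Lambda)\le C|D|(1+|\Lambda|^p)$. Similarly, for $F_{\vol,\e}^D(\zeta,\varphi_\Lambda)$: on each simplex $T$ the deformation $\varphi_\Lambda$ is affine with $\nabla u_{|T}=\Lambda$, so $W_\vol(\nabla u_{|T})=W_\vol(\Lambda)\le C(|\Lambda|^p+1)$ by \eqref{eq:sgc-vol-SRL}; since $\sum_{T\subset D}|T|\le|D|$ (the simplices are disjoint subsets of $D$), we get $F_{\vol,\e}^D(\zeta,\varphi_\Lambda)\le C|D|(1+|\Lambda|^p)$ directly, without even needing the simplex-count bound. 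Adding the two gives the asserted upper bound, and nonnegativity is immediate since $f_{nn}\ge 0$ and $W_\vol\ge 0$ by definition of $\calU_p$ and $\calV_p$.

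A subtlety I would flag: the definition \eqref{eq:def-ener-vol2} requires $u\in\mathfrak S_\e^D(\calL)$, and one must check $\varphi_\Lambda$ indeed lies in this space — it does, being globally affine, hence affine on every $\e T\cap D$. Also $\varphi_\Lambda\in W^{1,\infty}(D,\R^n)\subset L^1(D,\R^n)$, so $F_\e^D(\zeta,\varphi_\Lambda)$ is given by the first branch. The only genuinely nontrivial point is the edge/simplex counting estimate, i.e. confirming that admissibility forces all Delaunay edges to be short (bounded by $2\e\rho_2$) so that the sums in \eqref{def:ener-nn-lr} and \eqref{eq:def-volume} have $O(\e^{-d}|D|)$ terms; this is exactly the mechanism already invoked informally in Section~\ref{sec:rubber} (the non-empty space condition controls circumsphere radii), so I expect it to be the main, though routine, obstacle. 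Everything else is a direct substitution of the growth bounds \eqref{eq:gcSRL}–\eqref{eq:sgc-vol-SRL}, and the constant $C$ manifestly depends only on $d,n,\rho_1,\rho_2,p,f_{nn},W_\vol$ as claimed, uniformly in $\e$ because the $\e^d$ weighting in $F_{nn,\e}^D$ exactly cancels the $\e^{-d}$ growth in the edge count, and $F_{\vol,\e}^D$ needs no such cancellation.
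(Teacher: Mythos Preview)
Your proof is correct and follows essentially the same approach as the paper's: bound the Delaunay edge lengths by $2\e\rho_2$ via the non-empty space condition, deduce a uniform bound on vertex degrees from the hard-core condition, conclude that the number of contributing edges is $O(\e^{-d}|D|)$, and then apply the growth conditions \eqref{eq:gcSRL}--\eqref{eq:sgc-vol-SRL}. You are in fact more explicit than the paper in treating the volumetric term (via $\sum_{T\subset D}|T|\le|D|$, avoiding any simplex count) and in checking that $\varphi_\Lambda\in\mathfrak S_\e^D(\zeta)$.
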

\begin{proof}
For $\zeta \in \calA_{\rho_1,\rho_2}$,
no edge  of $\calT(\zeta)$ has length greater
than $2 \rho_2$.
Hence the number of neighbours of vertices of $\calT(\zeta)$ is bounded by some 
finite constant depending only on $\rho_1$, $\rho_2$ and $d$.
This implies that the number of edges of the Delaunay tessellation 
$ \calT(\e \zeta)\cap D$
is bounded by
a constant times the volume $\e^{-d}|D|$. 
Using Definition~\ref{hypo:ener-SRL} and \eqref{eq:def-ener-vol2}, 
this yields the desired result.
\end{proof}

%

Our next lemma
shows that sequences of minimizers are precompact in $L^p(A,\R^n)$.
This precompactness is a consequence of an argument \`a la Fr\'echet-Kolmogorov.
We give this argument for completeness in the following
lemma since it does not explicitly appear in \cite{Alicandro-Cicalese-Gloria-07b}.
\begin{lemma}\label{lem:Frechet-Kolmogorov}
Let $D\in \calO(\R^d)$ be an open set, let $(u_k)_{k \geq 1}$
be a bounded sequence in $L^p(D,\R^n)$, and let $A\in \calO(D)$ be an open set whose closure is contained in $D$.
If there exist an increasing continuous function $f:\R^+\to \R^+$ 
with $f(0)=0$ and a sequence $\e_k$ converging to zero such that for
all $h\in \R^d$ with $|h|$ small enough, and for all $k$,
\begin{equation}\label{eq:translation}
\|\tau_h u_k-u_k\|_{L^p(A,\R^n)} \,\leq \, f(|h|)+\e_k,
\end{equation}
where $\tau_hu_k:x\mapsto u_k(x+h)$ (whenever it is defined), 
then there exists $u\in L^p(D,\R^n)$ such that for a subsequence $u_k\to u$ in $L^p(A,\R^n)$.
\end{lemma}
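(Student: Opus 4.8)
The plan is to run the classical Fr\'echet--Kolmogorov compactness scheme, combining a mollification step with a diagonal extraction. The only twist relative to the textbook statement is that the translation bound \eqref{eq:translation} degrades to $\e_k$ (rather than to $0$) as $|h|\to0$; since $\e_k\to0$, this extra term will be absorbed by a three-$\varepsilon$ argument at the very end.

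First I would fix the geometry. Since $A$ is bounded with $\overline A\subset D$ and $D$ is open, $\delta_0:=\dist(\overline A,\R^d\setminus D)>0$; let $h_0>0$ be the threshold on $|h|$ implicit in \eqref{eq:translation}, and set $\delta_\ast:=\tfrac12\min(\delta_0,h_0)$. Fix a standard mollifier $\phi\in C^\infty_0(B(0,1))$ with $\phi\geq0$ and $\int\phi=1$, and put $\phi_\delta(x):=\delta^{-d}\phi(x/\delta)$ for $0<\delta\leq\delta_\ast$, so that for $x\in A$ the convolution $(u_k\ast\phi_\delta)(x)=\int\phi_\delta(y)\,u_k(x-y)\,dy$ is well defined. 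Writing $M:=\sup_k\|u_k\|_{L^p(D,\R^n)}$ and $p'$ for the conjugate exponent, I would then record two elementary facts. First, $\|u_k\ast\phi_\delta\|_{L^\infty(A)}\leq\|\phi_\delta\|_{L^{p'}}M$ and $\|\nabla(u_k\ast\phi_\delta)\|_{L^\infty(A)}=\|u_k\ast\nabla\phi_\delta\|_{L^\infty(A)}\leq\|\nabla\phi_\delta\|_{L^{p'}}M$, so that for each fixed $\delta$ the family $\{u_k\ast\phi_\delta:k\geq1\}$ is uniformly bounded and equicontinuous on $\overline A$. Second, starting from $(u_k\ast\phi_\delta)(x)-u_k(x)=\int\phi_\delta(y)\,(\tau_{-y}u_k(x)-u_k(x))\,dy$, applying Minkowski's integral inequality, and invoking \eqref{eq:translation} with the monotonicity of $f$,
\[
\|u_k\ast\phi_\delta-u_k\|_{L^p(A,\R^n)}\leq\int\phi_\delta(y)\,\|\tau_{-y}u_k-u_k\|_{L^p(A,\R^n)}\,dy\leq f(\delta)+\e_k,
\]
valid for all $0<\delta\leq\delta_\ast$ and all $k$.

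Next I would extract the subsequence. For each fixed $\delta$, the first fact and the Arzel\`a--Ascoli theorem make $\{u_k\ast\phi_\delta\}_k$ precompact in $C(\overline A,\R^n)$, hence in $L^p(A,\R^n)$. Applying this to $\delta=\delta_j:=2^{-j}\delta_\ast$ for $j\geq1$ and extracting successive subsequences, a standard diagonal procedure yields one subsequence $(u_{k_m})_m$ along which $u_{k_m}\ast\phi_{\delta_j}$ converges in $L^p(A,\R^n)$ as $m\to\infty$, for every $j$. To finish, I would check that $(u_{k_m})_m$ is Cauchy in $L^p(A,\R^n)$: given $\eta>0$, pick $j$ with $f(\delta_j)<\eta/6$ (possible since $f$ is continuous with $f(0)=0$), then $N$ with $\e_{k_m}<\eta/6$ for $m\geq N$ (possible since $\e_k\to0$), then $N'\geq N$ with $\|u_{k_m}\ast\phi_{\delta_j}-u_{k_{m'}}\ast\phi_{\delta_j}\|_{L^p(A,\R^n)}<\eta/3$ for $m,m'\geq N'$; inserting the mollified terms and using the second fact twice gives $\|u_{k_m}-u_{k_{m'}}\|_{L^p(A,\R^n)}<\eta$ for $m,m'\geq N'$. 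By completeness the subsequence converges in $L^p(A,\R^n)$ to a limit, which I extend (say by $0$ on $D\setminus A$) to the required $u\in L^p(D,\R^n)$.

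The first fact together with the Arzel\`a--Ascoli step, and the derivation of the second fact, are routine. The only point needing genuine care is the bookkeeping in the final three-$\varepsilon$ estimate, where the three error sources --- the modulus $f(\delta_j)$, the sequence-dependent tail $\e_{k_m}$, and the Cauchy tail of the mollified sequence --- must be chosen in the correct order; a secondary but necessary detail is the constraint, built into the choice of $\delta_\ast$, that every mollification stay inside $D$ so that \eqref{eq:translation} indeed applies on all of $A$.
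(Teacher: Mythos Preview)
Your argument is correct and follows the classical Fr\'echet--Kolmogorov scheme (mollification, Arzel\`a--Ascoli, three-$\varepsilon$), which is also what the paper does. The main structural difference is in how the limit is identified. The paper first invokes Banach--Alaoglu to extract a weak limit $u\in L^p(D,\R^n)$ of the whole bounded sequence, then uses weak convergence to identify each Arzel\`a--Ascoli limit as $\rho_j\star u$; uniqueness of this limit means the \emph{entire} mollified sequence converges (no diagonal needed), and the final triangle inequality compares $u_k$ directly to $u$. You instead bypass weak compactness altogether: a diagonal extraction over the scales $\delta_j$ produces one subsequence along which all mollified sequences converge, and you conclude by a Cauchy argument, extending the resulting $L^p(A)$-limit by zero to land in $L^p(D)$.

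Your route is slightly more elementary (no reflexivity or weak topology is used, so the argument would go through verbatim for $p=1$), at the cost of the diagonal bookkeeping. The paper's route is cleaner in that the limit $u$ is obtained globally on $D$ as a weak limit rather than by an ad hoc extension, and the uniqueness trick removes the need for nested subsequences. Both are standard variants and either would be acceptable here.
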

\begin{proof}[Proof]
Since the sequence $u_k$ is bounded in $L^p(D,\R^n)$, by the Banach-Alaoglu theorem, there exists $u \in L^p(D,\R^n)$ such that $u_k$ converges weakly to $u$ for a subsequence (not relabeled).
By the lower-semicontinuity of the norm for the weak convergence, the weak 
convergence of $\tau_hu_k-u_k$ to $\tau_hu-u$ in $L^p(A,\R^n)$ for all $h$ small enough, and by \eqref{eq:translation}
\begin{equation}\label{eq:difference-quotient}
\|\tau_h u-u\|_{L^p(A,\R^n)} \,\leq \, \liminf_{k\to \infty} \|\tau_h u_k-u_k\|_{L^p(A,\R^n)} \,\leq\, f(|h|).
\end{equation}
Let $\rho_j$ be a sequence of smoothing kernels of unit mass with support in the ball $B(0,1/j)$ centred at zero and of radius $1/j$.
For $j$ large enough and for all $k$, the convolution $\rho_j\star u_k$ of $u_k$ with $\rho_j$ is in $L^p(A,\R^n)$.
In addition, 
by Jensen's inequality with measure $\rho_j(y)dy$, and
Fubini's theorem,
%
\begin{eqnarray*}
\int_A |\rho_j\star u_k-u_k|^p &\leq & \int_A \left(\int_{B(0,1/j)} |u_k(x-y)-u_k(x)|\rho_j(y)dy\right)^pdx\\
&\leq&
\int_{B(0,1/j)} \int_A |u_k(x-y)-u_k(x)|^pdx\rho_j(y)dy 
\end{eqnarray*}
so that by \eqref{eq:translation},
\begin{equation}\label{eq:triang_1}
\|\rho_j\star u_k-u_k\|_{L^p(A,\R^n)} \leq f(j^{-1})+\e_k.
\end{equation}
%
%
Likewise, by \eqref{eq:difference-quotient}
\begin{equation}\label{eq:triang_3}
\|\rho_j\star u-u\|_{L^p(A,\R^n)} \, \leq \,f(j^{-1}).
\end{equation}

By construction, $\{\rho_j\star u_k\}_k$ is an equi-continuous sequence of bounded functions in $C(A)$.
Indeed the functions are bounded since 
$$\|\rho_j \star u_k\|_{L^\infty(A)}\leq \|\rho_j\|_{L^\infty}\|u_k\|_{L^1(D,\R^n)}.$$
Also for all $x_1,x_2\in A$,
\begin{equation*}
|\rho_j \star u_k(x_1)-\rho_j \star u_k(x_2)|\,\leq\, |x_1-x_2|\|\rho_j\|_{\Lip}\|u_k\|_{L^1(D,\R^n)},
\end{equation*}
so that the equi-continuity follows by the uniform boundedness of $u_k$ in $L^1(D,\R^n)$.
Hence, by Ascoli's theorem, for all $j$ there exists $\tilde u_j \in C(A)$ such that $\rho_j\star u_k$ converges up to extraction 
to $\tilde u_j$ uniformly in $A$ as $k\to \infty$, and therefore in $L^p(A,\R^n)$.
Noting that for all $x\in A$,
\begin{equation*}
\lim_{k\to \infty}\rho_j \star u_k(x)\,=\,\lim_{k\to \infty} \int_{D} u_k(y)\rho_j(x-y)dy \,=\,\int_{D} u(y)\rho_j(x-y)dy
\end{equation*}
by weak convergence of $u_k$ to $u$, we have $\tilde u_j=\rho_j \star u$. Since the limit is unique, the entire sequence $\{\rho_j\star u_k\}_k$
converges to $\rho_j \star u$ in $L^p(A,\R^n)$.
Hence, for all $j$, there exists $k_j$ such that for all $k\geq k_j$,
\begin{equation}\label{eq:triang_2}
\|\rho_j \star u_k-\rho_j \star u\|_{L^p(A,\R^n)}\,\leq \, f(j^{-1}).
\end{equation}

By the triangle inequality, for $j$ large enough and for all $k$,
\begin{equation*}
\|u_k-u\|_{L^p(A,\R^n)}\,\leq\,\|u_k-\rho_j\star u_k\|_{L^p(A,\R^n)}+\|\rho_j\star u_k - \rho_j \star u\|_{L^p(A,\R^n)}+\|\rho_j\star u-u\|_{L^p(A,\R^n)}.
\end{equation*}
Hence, by \eqref{eq:triang_1}, \eqref{eq:triang_2}, and \eqref{eq:triang_3}, for all $j$ large enough and all $k\geq k_j$,
\begin{equation*}
\|u_k-u\|_{L^p(A,\R^n)}\,\leq\,3f(j^{-1})+\e_k.
\end{equation*}
This implies that $u_k\to u$ in $L^p(A,\R^n)$.
\end{proof}

In the proof of Theorem~\ref{th:approx1} we shall make use of the following
$\Gamma$-compactness and convergence of infima results, for
which we use terminology from Definition \ref{quasidef}.
\begin{lemma}\label{lem:compact}
Let $\{\zeta_k\}_{k\in \N}$ be a sequence of point sets
in $\calA_{\rho_1,\rho_2}$.
Let $D\in \calO(\R^d)$ be an open set.
Then for any sequence $\{\e_k\}_{k\in \N}$ of positive
numbers converging to zero, 
there exist a subsequence (not relabeled) and a Carath\'eodory function $W:D\times \M^{n\times d}\to \R^+$
such that for all open sets $A\in \calO(D)$ the functionals 
$F_{\e_k}^A(\zeta_k)$ 
(given by (\ref{eq:def-ener-vol2})) satisfy
\begin{equation}
\Gamma-\lim_{k\to \infty} F_{\e_k}^A(\zeta_k) = F^A
\label{111121}
\end{equation}
where the integral
functional $F^A:L^{p}(D,\R^n)\to[0,+\infty]$ is defined by
\begin{equation}
\label{FAdef}
F^A(u)=\begin{cases}\int_A W(x,\nabla
u(x))\ud x & \text{if } u\in W^{1,p}(A,\R^n), \cr +\infty
&\text{otherwise}.
\end{cases}
\end{equation}
In addition, $W$ is quasiconvex in its second variable and
satisfies a standard growth condition \eqref{eq:def-sgc} of order $p$.
\end{lemma}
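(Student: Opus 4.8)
The plan is to deduce the statement from the general compactness theory for integral functionals of the calculus of variations, combined with the uniform growth and control estimates already available for the discrete energies $F_\e^D$. First I would fix the sequence $\{\e_k\}$ and a countable dense family of open sets $\mathcal{A}_0 \subset \calO(D)$ (for instance finite unions of dyadic cubes compactly contained in $D$), so that a diagonal extraction over $\mathcal{A}_0$ suffices and the extension to all of $\calO(D)$ follows by inner/outer regularity of the limit functional in its set variable. The key input is that each $u\mapsto F_{\e_k}^A(\zeta_k,u)$, after identifying $u$ with its piecewise-affine interpolant, can be written (up to an error that is negligible as $\e_k \to 0$) as an integral functional $\int_A f_k(x,\nabla u)\,\ud x$ whose integrand satisfies the two-sided bound $C^{-1}|\Lambda|^p - C \le f_k(x,\Lambda) \le C(|\Lambda|^p+1)$ uniformly in $k$; this uses the growth conditions in Definition~\ref{hypo:ener-SRL}, the hard-core and non-empty-space constraints on $\zeta_k \in \calA_{\rho_1,\rho_2}$ (which bound edge lengths by $2\rho_2$ and the number of edges per unit volume), and Lemma~\ref{lem:unif-bd} for the upper bound on affine competitors.

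Next I would invoke the abstract $\Gamma$-compactness theorem for $p$-growth integral functionals (as in \cite{Braides-02,Braides-98}): from the uniform $p$-growth and the locality/measure-subadditivity of $A \mapsto F_{\e_k}^A$, there is a subsequence along which $F_{\e_k}^A(\zeta_k)$ $\Gamma$-converges for every $A$ in the countable family to a functional of the form \eqref{FAdef}, where $W$ is measurable, quasiconvex in $\Lambda$, and inherits the standard growth condition \eqref{eq:def-sgc} of order $p$; the Carath\'eodory property (continuity in $\Lambda$) follows from the quasiconvexity together with the $p$-growth, via the standard fact that quasiconvex functions with such growth are automatically locally Lipschitz in $\Lambda$. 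Measurability of $x \mapsto W(x,\Lambda)$ and consistency of the family $\{F^A\}$ in $A$ give, by the De~Giorgi--Letta criterion, that $W$ is a single Carath\'eodory integrand representing all the limits simultaneously; then \eqref{111121} extends from $\mathcal{A}_0$ to arbitrary $A \in \calO(D)$ because both the prelimit functionals and the candidate limit are (inner-regular) increasing set functions.

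The main obstacle I anticipate is \emph{not} the abstract machinery but the passage from the genuinely discrete energy $F_{\e_k}^D(\zeta_k,\cdot)$ — a sum over Delaunay edges and simplices of the \emph{random}, non-periodic tessellation $\calT(\e_k \zeta_k)$ — to something to which the continuum $\Gamma$-compactness theorem applies. One must show that on the subspace of piecewise-affine functions subordinate to $\calT(\e_k\zeta_k)$ the energy is, uniformly in $k$, comparable to a continuum integral functional with $p$-growth, and that the relaxation/lower-semicontinuous envelope does not see the $\e_k$-scale microstructure beyond what is captured by $W$. This is precisely the technical content already carried out in \cite{Alicandro-Cicalese-Gloria-07b} for stationary lattices; here the point is that the argument there is \emph{deterministic} for each fixed realization and uses only the hard-core and non-empty-space bounds, so it applies verbatim to an arbitrary sequence $\zeta_k \in \calA_{\rho_1,\rho_2}$. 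I would therefore organize the proof as: (i) reduce to integral functionals with uniform $p$-growth using the arguments of \cite{Alicandro-Cicalese-Gloria-07b} and the geometric bounds on $\calA_{\rho_1,\rho_2}$; (ii) apply $\Gamma$-compactness and the De~Giorgi--Letta localization to extract $W$; (iii) record that quasiconvexity plus $p$-growth yield the Carath\'eodory regularity and the standard growth condition, completing the proof.
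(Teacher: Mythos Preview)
Your proposal is correct and follows essentially the same approach as the paper: both rely on the deterministic compactness argument of \cite{Alicandro-Cicalese-Gloria-07b} (De~Giorgi--Letta localization plus integral representation \`a la Buttazzo--Dal~Maso), observing that this argument uses only the hard-core and non-empty-space bounds on the lattice and therefore applies to an arbitrary sequence $\zeta_k \in \calA_{\rho_1,\rho_2}$, with the volumetric term $W_\vol$ absorbed as a standard continuous perturbation. The paper is slightly more explicit about one technical point you do not mention: in \cite{Alicandro-Cicalese-Gloria-07b} the energies are defined on piecewise-constant functions on the Voronoi tessellation rather than piecewise-affine functions on the Delaunay tessellation, and one must invoke \cite[Remark~4]{Alicandro-Cicalese-Gloria-07b} to transfer the result.
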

In this subsection we will only make use of this result for $D=Q$.
\begin{proof}[Proof]
Lemma~\ref{lem:compact} is a corollary of \cite[Theorem~3]{Alicandro-Cicalese-Gloria-07b}, which is the corresponding compactness result
when the sequence $\zeta_k$ is a constant sequence which corresponds to the realization of a stochastic lattice, and for $W_\vol\equiv 0$.
Yet, there, the energy functionals are defined on piecewise constant functions (on the Voronoi tessellation associated with the point set), rather than 
on continuous and piecewise affine functions on a Delaunay tessellation.
As pointed out in \cite[Remark~4]{Alicandro-Cicalese-Gloria-07b}, the compactness result of \cite[Theorem~3]{Alicandro-Cicalese-Gloria-07b} holds as well 
for energy functionals defined on continuous and piecewise affine functions on an associated Delaunay tessellation.
Hence, since the proof of the individual compactness result \cite[Theorem~3]{Alicandro-Cicalese-Gloria-07b}
only makes use of deterministic arguments, it carries over to the case dealt with here provided $W_\vol\equiv 0$.
It remains to argue that one can add the volumetric term $W_\vol$ in \cite[Theorem~3]{Alicandro-Cicalese-Gloria-07b}.
Since this term yields a continuous contribution (it already has the form of an integral) and since the proof of \cite[Theorem~3]{Alicandro-Cicalese-Gloria-07b}
has the same structure as for the homogenization of multiple integrals (application of the De Giorgi-Letta criteria, and
of the integral representation results by Buttazzo and Dal Maso), this additional contribution can be treated in a standard way (see for instance \cite[Chapter~12]{Braides-98}).
\end{proof}
Recall notation $D_{R,r}$ from Section \ref{sec:result}. 
For $\e >0$, $r  > 0$,  $A \in \calO(\R^d)$,
and $\zeta \in \calA_{\rho_1,\rho_2}$, and $\Lambda \in \M^{n\times d}$,
define
\begin{equation}\label{eq:def-space-BC}
\mathfrak{S}_{\e}^{A,\Lambda,r}(\zeta)
:=
\Big\{ u\in  
{\mathfrak S}_{\e}^{A} (\zeta)
\big|
u \equiv \varphi_\Lambda \mbox{ on }
(\e \zeta \cap A \setminus A_{1,\e r}) \cup \partial A \Big\}.
\end{equation}
For all tessellations $\calT$ of $\R^d$, and all $ A\in \calO(\R^d)$,
we define $A_{\calT}$ as the interior of the closed set
$\cup_{T\in \calT, T\subset A}\overline{T}$.

\begin{lemma}
\label{lem1211}
Suppose $A,A',A_+ \in \calO(\R^d)$ are open sets with
the closure of $A$ contained in $A'$ and
the closure of $A'$ contained in $A_+$.
Let $\Lambda \in \M^{n\times d}$ and let $r \geq 2 \rho_2$.
Then there exists  a constant $C >0$ such that for
all $\zeta \in \calA_{\rho_1,\rho_2}$, $\e \in (0,1)$, and
$u \in \mathfrak{S}_{\e}^{A,\Lambda,r}(\zeta)$,
with  $u$  extended by $\varphi_\Lambda$ on $A_+\setminus A$, and
all $h\in \R^d$ with $|h| < 1/C$, we have
\begin{equation*}
\|\tau_h u-u\|^p_{L^p(A',\R^n)} \,\leq\, C
[ |h|^p (1+ F^{A}_{\e}(\zeta,u) ) + \e ],
\end{equation*}
where $\tau_h$ is as defined in Lemma \ref{lem:Frechet-Kolmogorov}.
\end{lemma}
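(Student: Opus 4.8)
The plan is to bound the $L^p$ modulus of continuity of $u$ cell by cell over the rescaled Delaunay tessellation $\e\calT(\zeta)$, and then to turn the resulting sums of vertex increments into the nearest-neighbour part $F_{nn,\e}^A(\zeta,u)$ of the energy via the coercivity of $f_{nn}$. First I would make two preliminary reductions. Since $r\ge 2\rho_2$ and every cell of $\e\calT(\zeta)$ has diameter at most $2\rho_2\e$, any cell meeting $\partial A$ has all its vertices in the layer $A\setminus A_{1,\e r}$, where the constraint built into $\mathfrak{S}_\e^{A,\Lambda,r}(\zeta)$ forces $u\equiv\varphi_\Lambda$; together with the prescribed extension by $\varphi_\Lambda$ on $A_+\setminus A$ this shows that $u$ coincides with the affine map $\varphi_\Lambda$ on every cell meeting $A_+\setminus\overline A$ and is affine on every cell contained in $A$, so $u\in W^{1,\infty}(A_+,\R^n)$ (the degenerate case in which the layer exhausts $A$, where $u\equiv\varphi_\Lambda$ on $A'$ and the estimate is trivial, is set aside). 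Second, I take $C\ge 2/\dist(\overline{A'},\R^d\setminus A_+)$, so that $|h|<1/C$ forces $A'+th\subset A_+$ for all $t\in[0,1]$; it then remains to bound $\int_{A'}|u(x+h)-u(x)|^p\,dx$ by $C[|h|^p(1+F_\e^A(\zeta,u))+\e]$.

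For $x\in A'$ the segment $[x,x+h]\subset A_+$ crosses finitely many cells $\e T_1,\dots,\e T_m$, and $u(x+h)-u(x)=\sum_{k=1}^m\bigl(u(p_k)-u(p_{k-1})\bigr)$ with $p_0=x,\dots,p_m=x+h$ the crossing points; the hard-core and non-empty space conditions bound cell diameters by $2\rho_2\e$, cell degrees, and the number of cells meeting a ball of radius $2\rho_2\e$ --- the elementary counting already used for Lemma~\ref{lem:unif-bd} --- whence $m\le C(1+|h|/\e)$. On one cell $\e T$, with vertices $\e v_0,\dots,\e v_d$ and barycentric coordinates $\lambda_0,\dots,\lambda_d$, affinity of $u$ and of the $\lambda_i$ gives, for $y,y+h\in\e T$,
\[
u(y+h)-u(y)=\sum_{i=1}^d c_i\,\bigl(u(\e v_i)-u(\e v_0)\bigr),\qquad c_i:=\nabla\lambda_i\cdot h ,
\]
where the $c_i$ are independent of $y$, with $|c_i|\le1$ (barycentric coordinates lie in $[0,1]$) and $|c_i|\le|h|/(\e h_i)$, $h_i$ the unscaled height of $T$ from $v_i$. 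Put $e_{\e T}:=\sum_{i=1}^d|u(\e v_i)-u(\e v_0)|$ and $c^\ast:=\max_{0\le i\le d}|c_i|$. I then split the cells into ``fat'' ones (inradius $\ge\kappa\e$ for a fixed $\kappa>0$, on which $|\nabla u|_{\e T}|\le C\e^{-1}e_{\e T}$ because all heights are $\ge\kappa\e$) and the rest, for which one retains the finer fact that the slab $\{y\in\e T:y+h\in\e T\}$ has measure at most $(1-(c^\ast)^d)|\e T|$.

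Splitting $u(x+h)-u(x)$ into its fat-cell and thin-cell contributions, the fat part is at most $\int_0^1|\nabla u(x+th)|\,\mathbf 1[x+th\in\text{fat cell or }A_+\setminus A]\,|h|\,dt$, so by Jensen and Fubini $\int_{A'}|\text{fat part}|^p\,dx\le|h|^p\int_{A_+,\text{fat}}|\nabla u|^p$; and $\int_{A_+,\text{fat}}|\nabla u|^p\le |A_+\setminus A|\,|\Lambda|^p+C\e^{d-p}\sum_{\e T}e_{\e T}^p$, which by $f_{nn}(w,s)\ge C^{-1}|s|^p-C$, the edge-length bound $2\rho_2\e$, the $O(\e^{-d})$ count of edges, bounded cell degree (and $F_{\vol,\e}^A\ge0$) is at most $C(1+F_\e^A(\zeta,u))$. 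Hence the fat part contributes $\le C|h|^p(1+F_\e^A(\zeta,u))$.

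The thin-cell contribution is the main obstacle. Hard-core and non-empty space do not rule out Delaunay simplices of arbitrarily small inradius, so $|\nabla u|_{\e T}|$ --- and therefore $\int_{A,\text{thin}}|\nabla u|^p$ --- is not controlled by $F_\e^A(\zeta,u)$ uniformly in $\zeta$, and the pointwise bound $|u(x+h)-u(x)|\le|h|\,|\nabla u|$ is useless on thin cells. Instead one estimates the increment of $u$ across a thin cell $\e T$ by $c^\ast_{\e T}\,e_{\e T}$ and weights it by the small measure $(1-(c^\ast_{\e T})^d)|\e T|$ of the relevant set of $x$'s; summing over thin cells (using the tube estimate $|\{x\in A':[x,x+h]\cap\e T\ne\emptyset\}|\le|\e T|+C|h|(\rho_2\e)^{d-1}$ and bounded degree), and decomposing $c^\ast_{\e T}e_{\e T}$ into the part carried by the component of $h$ along the ``flat'' directions of $\e T$ (which produces the $|h|^p$ factor, exactly as for fat cells) and the part carried by the transverse component (bounded by $e_{\e T}^p$ times the thinness, and collapsing to the additive $C\e$), is the technical heart of the proof. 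Making this trade-off quantitative and uniform over all admissible $\zeta$ --- most cleanly, I expect, by inducting on the dimension of the smallest face of $\e T$ carrying the transverse part of $h$ --- then gives $\int_{A'}|\text{thin part}|^p\,dx\le C[|h|^p(1+F_\e^A(\zeta,u))+\e]$, and the lemma follows; the hypothesis $|h|<1/C$ enters only to keep the translates inside $A_+$.
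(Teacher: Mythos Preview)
The paper's proof takes a much shorter route than yours: it invokes \cite[formula~(31)]{Alicandro-Cicalese-Gloria-07b} as a black box to obtain $\|\tau_h u-u\|^p_{L^p(A')}\le C[|h|^p(F_\e^{A_+}(\zeta,u)+1)+\e]$ directly, and then reduces $F_\e^{A_+}$ to $F_\e^A$ by observing (this is where $r\ge 2\rho_2$ enters) that $u\equiv\varphi_\Lambda$ on $A_+\setminus A_{\calT(\e\zeta)}$, so the excess energy equals $F_\e^{A_+\setminus\overline{A_{\calT(\e\zeta)}}}(\zeta,\varphi_\Lambda)\le F_\e^{A_+}(\zeta,\varphi_\Lambda)$, which is bounded by a constant via Lemma~\ref{lem:unif-bd}. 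The translation estimate itself is thus imported, not re-proved.

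Your direct approach attempts to supply that estimate from scratch, and the preliminary reductions and the fat-cell half are fine. But the thin-cell half---which you yourself flag as the technical heart---is not a proof; it is a plan with a gap. Two concrete issues. First, the bound $|c_i|\le 1$ holds only when both $y$ and $y+h$ lie in $\e T$, so for the crossing increments $u(p_k)-u(p_{k-1})$ you still need a separate (easy) argument via convex combinations of vertex values. Second, and more seriously, the measure bound $(1-(c^\ast)^d)|\e T|$ controls the stay-set $\{y\in\e T:y+h\in\e T\}$, not the set of $x\in A'$ whose segment $[x,x+h]$ merely meets $\e T$; the latter has measure $|\e T|+O(|h|\e^{d-1})$ with no gain from thinness, and since admissibility does not bound the number of thin Delaunay simplices, your transverse/flat face-induction sketch has to manufacture the additive $C\e$ from nothing visible in your estimates. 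The argument behind \cite[formula~(31)]{Alicandro-Cicalese-Gloria-07b} sidesteps thin simplices altogether by working with the piecewise-constant interpolant on Voronoi cells, where no simplex degeneracy enters; if you want a self-contained proof, that is the route to take, and then compare the two interpolants.
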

\begin{proof}[Proof.]
By \cite[formula (31)]{Alicandro-Cicalese-Gloria-07b} 
(the result can be extended to the case $W_\vol\geq 0$, since the associated
contribution to the energy
is non-negative), 
there is a constant $C$ such that
for all $h\in \R^d$ with $|h| < 1/C$,
\begin{equation}
\label{1209a}
\|\tau_h u-u\|^p_{L^p(A',\R^n)} \,\leq\, C[
|h|^p
(F_{\e}^{A_+}(\zeta,u)+ 1)
+\e ]
\end{equation}
for all $\zeta,\e,u$.
By definition of the functionals and nonnegativity of $f_{nn}$ and $W_\vol$,
\begin{eqnarray}
F_{\e}^{A_+}(\zeta,u)
&\leq & 
F_{\e}^{{A}_{\calT(\e \zeta)}}
(\zeta,u)+F_{\e}^{A_+\setminus \overline{{A}_{\calT(\e \zeta)}}}
(\zeta,u)
\nonumber
\\
\label{1209b}
& \leq &
F_{\e}^{A}
(\zeta,u)+F_{\e}^{A_+\setminus \overline{{A}_{\calT(\e \zeta)}}}
(\zeta,u).
\end{eqnarray}
Since $r \geq 2 \rho_2$,
for all $k$,  all cells of
$\calT(\e \zeta)$ which have a vertex in $A_{1, \e r}$
are contained in $A$ so that
for $u \in \mathfrak{S}_{\e}^{A,\Lambda,r}(\zeta)$ we have
\begin{eqnarray*}
u \equiv
\, {\varphi_\Lambda }
~~~ \mbox{ on }
~~~
A_+\setminus A_{\calT(\e \zeta)}.
\end{eqnarray*}
Hence by non-negativity of $f_{nn}$ and $W_\vol$, 
\begin{equation*}
F_{\e}^{A_+\setminus \overline{A_{\calT(\e \zeta)}}}
(\zeta,u) \,=\,F_{\e}^{A_+\setminus \overline{A_{\calT(\e \zeta)}}}
(\zeta,\varphi_\Lambda)\,\leq\,F_{\e}^{A_+}(\zeta,\varphi_\Lambda).
\end{equation*}
By Lemma~\ref{lem:unif-bd}, the last expression is bounded
uniformly in $\zeta,\e,u$.
Combined with (\ref{1209a}) and (\ref{1209b}), this
gives us  result.
\end{proof}

\begin{lemma}\label{lem:BC}
In addition to the notation and assumptions of Lemma~\ref{lem:compact}, 
let $\Lambda \in \M^{n\times d}$ and
$A\in\calO(D)$ be an open set.
Let $W$ be as in Lemma \ref{lem:compact}.
Suppose $\e_k$ is a sequence such that
$\e_k >0$, $\lim_{k \to \infty} \e_k = 0$ and
(\ref{111121}) holds.
Then for $r \geq 2 \rho_2$,
\begin{equation*}
\lim_{k\to \infty}\inf\left\{  F^A_{\e_k}(\zeta_k,u)\,|\, u\in {\mathfrak{S}}_{\e_k}^{A,\Lambda,r}(\zeta_k)\right\} \,
=\,\inf_{u\in W^{1,p}_0(A)}
\left\{ \int_A W(x,\Lambda+\nabla u(x))dx
\right\}.
\end{equation*}
%
%
%
\end{lemma}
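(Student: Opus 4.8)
The plan is to prove the two inequalities $m \le \liminf_{k\to\infty} m_k$ and $\limsup_{k\to\infty} m_k \le m$, where I abbreviate $m_k := \inf\{F_{\e_k}^{A}(\zeta_k,u) : u\in \mathfrak{S}_{\e_k}^{A,\Lambda,r}(\zeta_k)\}$ and $m := \inf_{v\in W^{1,p}_0(A,\R^n)} \int_A W(x,\Lambda+\nabla v(x))\ud x$; together they give the asserted limit. Since $\varphi_\Lambda$ is affine and trivially satisfies every Dirichlet constraint defining $\mathfrak{S}_{\e_k}^{A,\Lambda,r}(\zeta_k)$, we have $\varphi_\Lambda\in\mathfrak{S}_{\e_k}^{A,\Lambda,r}(\zeta_k)$, hence by Lemma~\ref{lem:unif-bd} $0\le m_k\le F_{\e_k}^{A}(\zeta_k,\varphi_\Lambda)\le C|A|(1+|\Lambda|^p)$; likewise $m$ is finite by the standard growth condition on $W$ (take $v=0$). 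I fix once and for all open sets $A',A_+\in\calO(\R^d)$ with $\overline A\subset A'$ and $\overline{A'}\subset A_+$, so that Lemma~\ref{lem1211} applies with these sets.

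For the lower bound I pass to a subsequence (not relabeled) along which $m_k\to\liminf_k m_k$ and pick $u_k\in\mathfrak{S}_{\e_k}^{A,\Lambda,r}(\zeta_k)$ with $F_{\e_k}^{A}(\zeta_k,u_k)\le m_k+\e_k$, extended by $\varphi_\Lambda$ on $A_+\setminus A$. The energies being uniformly bounded and $f_{nn}$ having $p$-growth from below, $\nabla u_k$ is bounded in $L^p(A_+,\M^{n\times d})$ (it equals $\Lambda$ off the triangulated region), whence $\{u_k\}$ is bounded in $L^p(A_+,\R^n)$ by Poincar\'e. Lemma~\ref{lem1211} then supplies the translation estimate required by Lemma~\ref{lem:Frechet-Kolmogorov} (on $A'$, with $f(t)=C^{1/p}t$ and an additive sequence $O(\e_k^{1/p})\to0$), so along a further subsequence $u_k\to u$ in $L^p(A,\R^n)$. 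Because $r\ge 2\rho_2$, the argument in the proof of Lemma~\ref{lem1211} gives $u_k\equiv\varphi_\Lambda$ on $A\setminus A_{\calT(\e_k\zeta_k)}$; since every edge of $\calT(\e_k\zeta_k)$ has length at most $2\rho_2\e_k\to 0$ this uncovered collar shrinks onto $\partial A$, and together with the $L^p$ bound on $\nabla u_k$ this forces $u-\varphi_\Lambda\in W^{1,p}_0(A,\R^n)$. By the $\Gamma$-$\liminf$ inequality contained in \eqref{111121}, $\int_A W(x,\nabla u(x))\ud x=F^A(u)\le\liminf_k F_{\e_k}^{A}(\zeta_k,u_k)=\liminf_k m_k$; as $u-\varphi_\Lambda$ is admissible for $m$, we get $m\le\liminf_k m_k$.

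For the upper bound I fix $\eta>0$ and choose $v\in C_0^\infty(A,\R^n)$ with $\int_A W(x,\Lambda+\nabla v(x))\ud x\le m+\eta$ (possible by density of $C_0^\infty(A,\R^n)$ in $W^{1,p}_0(A,\R^n)$ together with continuity of $v\mapsto\int_A W(x,\nabla v(x))\ud x$ on $W^{1,p}(A,\R^n)$, which follows from the standard growth condition on $W$). Set $w:=\varphi_\Lambda+v$, so $w\equiv\varphi_\Lambda$ outside some compact $K\subset A$. By the $\Gamma$-$\limsup$ inequality contained in \eqref{111121} there is a recovery sequence $\tilde w_k\to w$ in $L^p(A,\R^n)$ with $\limsup_k F_{\e_k}^{A}(\zeta_k,\tilde w_k)\le F^A(w)\le m+\eta$. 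To turn $\tilde w_k$ into a competitor for $m_k$ I modify it in a thin shell around $\partial A$ by the De Giorgi joining (``fundamental estimate'') construction for this class of discrete functionals, exactly as in \cite{Alicandro-Cicalese-Gloria-07b}: using a cutoff equal to $1$ on $K$ and $0$ outside a slightly larger set, and averaging over many concentric shells so as to keep the crossover term negligible, one obtains $w_k\in\mathfrak{S}_{\e_k}^{A,\Lambda,r}(\zeta_k)$ which coincides with $\varphi_\Lambda$ on a collar of $\partial A$ of width far exceeding $\e_k r$ (hence satisfies the discrete Dirichlet constraints), and for which $\limsup_k F_{\e_k}^{A}(\zeta_k,w_k)\le F^A(w)+C(1+|\Lambda|^p)|S|$ for a thin shell $S$ around $\partial A$. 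Choosing $S$ thin enough makes the last term at most $\eta$, so $\limsup_k m_k\le\limsup_k F_{\e_k}^{A}(\zeta_k,w_k)\le m+C\eta$; letting $\eta\to0$ gives $\limsup_k m_k\le m$, and combining with the lower bound yields $\lim_k m_k=m$.

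The main obstacle is the boundary layer. On the lower-bound side one must quantify the shrinking of the uncovered collar $A\setminus A_{\calT(\e_k\zeta_k)}$ and use the energy bound (to get equi-integrability of $\nabla u_k$ near $\partial A$) in order to pass the discrete Dirichlet condition to the limit as $u-\varphi_\Lambda\in W^{1,p}_0(A)$; on the upper-bound side one must carry out the joining construction that imposes the discrete Dirichlet condition on the recovery sequence without an $O(1)$ loss of energy, which is delicate because of the discrete and mildly non-local (Delaunay) structure of $F_{\e_k}^{A}$, and for which one invokes the fundamental estimate already established for these functionals in \cite{Alicandro-Cicalese-Gloria-07b}.
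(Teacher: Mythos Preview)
Your overall strategy matches the paper's: the lower bound via compactness of near-minimizers plus the $\Gamma$-liminf in \eqref{111121}, and the upper bound via a recovery sequence adjusted near $\partial A$ by De Giorgi's joining (the paper likewise refers this step to \cite[Proof of Proposition~3]{Alicandro-Cicalese-Gloria-07b}). The upper bound is fine. The lower bound, however, contains a real gap: you assert that the energy bound and the lower $p$-growth of $f_{nn}$ give $\nabla u_k$ bounded in $L^p(A_+,\M^{n\times d})$, and you use this both for the $L^p$ bound on $u_k$ (via the continuous Poincar\'e inequality) and for passing the Dirichlet condition to the limit. But the energy only controls the \emph{edge-directional} difference quotients $\sum_{\{x,y\}}\e_k^d\,\big|\tfrac{u_k(y)-u_k(x)}{|y-x|}\big|^p$, not $\int_A|\nabla u_k|^p$. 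On a Delaunay simplex $T$ the full gradient is recovered from the edge quotients with coefficients of order $1/(\text{thickness of }T)$; for $d\ge 3$ the Delaunay tessellation of a $(\rho_1,\rho_2)$-admissible set may contain arbitrarily thin slivers, and the volume factor $|T|$ does not compensate when $p>1$. Thus $\|\nabla u_k\|_{L^p(A)}$ is \emph{not} controlled by $F_{\e_k}^A(\zeta_k,u_k)$ in general, and both your Poincar\'e step and your trace step fail as written.

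The paper circumvents this entirely. For the $L^p$ bound on $u_k$ it proves a \emph{discrete} Poincar\'e inequality on the Delaunay graph: for each vertex one builds a path of $O(\e_k^{-1})$ edges to $\partial A_{\calT(\e_k\zeta_k)}$, applies Jensen along the path, and uses that each edge lies in $O(\e_k^{-1})$ paths to bound $\sum_{x}\e_k^d|u_k(x)-\varphi_\Lambda(x)|^p$ directly by the edge sum, hence by the energy; this controls $\|u_k\|_{L^p}$ on the triangulated region via the uniform bounds on simplex diameters and vertex degrees. For $u-\varphi_\Lambda\in W^{1,p}_0(A)$ the paper never passes through $\nabla u_k$: it uses the translation estimate of Lemma~\ref{lem1211} (which you already invoke) together with the characterization of $W^{1,p}(A')$ by difference quotients to get $u\in W^{1,p}(A')$, and then concludes from $u\equiv\varphi_\Lambda$ on $A'\setminus A$ and continuity of the trace $W^{1,p}(A')\to W^{1-1/p,p}(\partial A)$. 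You already have the tools for both fixes; what is missing is to replace the unjustified $L^p$-gradient bound by these two arguments.
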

\begin{proof}[Proof.]
Let $r \geq 2 \rho_2$. For $\e >0$ and
$\zeta \in \calA_{\rho_1,\rho_2}$,
define the functionals
$F^{A,\Lambda,r}_{\e}(\zeta,\cdot):L^p(A,\R^n)\to [0,+\infty]$
and $F^{A,\Lambda}:L^p(A,\R^n)\to [0,+\infty]$ by
\begin{equation*}
F^{A,\Lambda,r}_{\e}(\zeta,u)\,=\,\left\{
\begin{array}{rl}
F^{A}_{\e}(\zeta,u) &\mbox{ if }u\in \mathfrak{S}_{\e}^{A,\Lambda,r}(\zeta),\\
+\infty&\mbox{ else};
\end{array}
\right.
\end{equation*}
\begin{equation*}
F^{A,\Lambda}(u)\,=\,\left\{
\begin{array}{rl}
F^{A}(u) &\mbox{ if }u-\varphi_\Lambda\in W^{1,p}_0(A,\R^n),\\
+\infty&\mbox{ else}.
\end{array}
\right.
\end{equation*}

We split the proof into two steps.
First we prove the $\Gamma$-convergence result
\begin{equation}
\Gamma-\lim_{k\to \infty} F_{\e_k}^{A,\Lambda,r}(\zeta_k)
= F^{A,\Lambda}.
\label{1208a}
\end{equation}
Then we appeal to Lemma~\ref{lem:conv-min} to prove the desired result.

\step{1}{Proof of \eqref{1208a}}
Let $A'\in \calO(\R^d)$ be an open set which contains
the closure of $A$, and let
$A_+ \in \calO(\R^d)$ be an open set which contains the closure of
$A'$.

To prove (\ref{1208a}), we  first address the $\Gamma$-liminf inequality.
Let $u_k\in \mathfrak{S}_{\e_k}^{A,\Lambda,r}(\zeta_k)$ be a 
sequence converging to some $u $ in $ L^p(A,\R^n)$
such that 
\begin{equation}\label{eq:assump-bded}
\sup_k F^{A,\Lambda,r}_{\e_k}(\zeta_k,u_k)\,<\,+\infty.
\end{equation}
Since $F^{A}_{\e_k}(\zeta_k,\cdot)\leq F^{A,\Lambda,r}_{\e_k}(\zeta_k,\cdot)$, the 
assumption (\ref{111121}) and estimate \eqref{eq:assump-bded}
imply that 
\begin{equation*}
F^A(u) \,\leq\,\liminf_{k\to \infty}F^{A}_{\e_k}(\zeta_k,u_k) \,<\,+\infty,
\end{equation*}
so that $u\in W^{1,p}(A,\R^n)$. For this to imply the desired $\Gamma$-liminf inequality, we need to prove that $u-\varphi_\Lambda\in W^{1,p}_0(A,\R^n)$ so that $F^{A,\Lambda}(u)=F^A(u)$.


Extend $u_k$ and $u$ by $\varphi_\Lambda$ on $A_+\setminus A$ for all $k$, so that $u_k\to u$ in $L^p(A_+)$.
By Lemma \ref{lem1211}, and (\ref{eq:assump-bded}),
there exists some $C>0$ such that for all $k$ and 
for all $h\in \R^d$ with $|h| < 1/C$,
\begin{equation}\label{eq:verif-translation}
\|\tau_h u_k-u_k\|^p_{L^p(A',\R^n)} \,\leq\, C( |h|^p+ \e_k). 
\end{equation}

Since $\tau_hu_k-u_k\to \tau_hu-u$ in $L^p(A',\R^n)$,
%
\begin{equation*}
\|\tau_h u-u\|^p_{L^p(A',\R^n)} \,\leq\, C|h|^p
\end{equation*}
for all $h\in \R^d$ with $|h| <1/C$.
From the characterization of $W^{1,p}(A',\R^n)$ by difference quotients (see for instance
\cite[Theorem~3 Section 5.8]{Evans-98}) we thus deduce that $u\in W^{1,p}(A',\R^n)$. Combined with the fact that 
$u \equiv \varphi_{\Lambda}$ on $A'\setminus A$, and the continuity of the 
trace operator from $W^{1,p}(A',\R^n)$
to $W^{1-1/p,p}(\partial A,\R^n)$
(see for instance \cite[Theorem~1 Section 5.5]{Evans-98}),
this shows that $u=\varphi_{\Lambda}$ in $W^{1-1/p,p}(\partial A,\R^n)$, and therefore $u-\varphi_{\Lambda}\in W^{1,p}_0(A,\R^n)$. The $\Gamma$-liminf inequality is proved.

To prove the existence of recovery sequences, for every
$u$ with
$u - \varphi_\Lambda\in W^{1,p}_0(A,\R^n)$ we have to construct a sequence $u_k\in \mathfrak{S}_{\e_k}^{A,\Lambda,r}(\zeta_k)$
which converges to $u$ in $L^p(A,\R^n)$ and satisfies
\begin{equation*}
F^{A,\Lambda}(u) \,=\,\lim_{k\to \infty}F^{A,\Lambda,r}_{\e_k}(\zeta_k,u_k),
\end{equation*}
which we may also rewrite as $F^{A}(u) \,=\,\lim_{k\to \infty}F^{A}_{\e_k}(\zeta_k,u_k)$.
Using the $\Gamma$-convergence of $F^{A}_{\e_k}(\zeta_k)$, it
is enough to modify a recovery sequence for $u$ in
$\mathfrak{S}_{\e_k}^{A}(\zeta_k)$ so that it belongs to 
$\mathfrak{S}_{\e_k}^{A,\Lambda,r}(\zeta_k)$
and still satisfies the identity above.  This modification can be
achieved using De Giorgi's averaging method, which is a very technical
argument in this discrete case.  We refer the reader to 
\cite[Proof of Proposition~3]{Alicandro-Cicalese-Gloria-07b},
where this issue is treated in detail.

\medskip

\step{2}{Proof of the convergence of infima}
With (\ref{1208a}) established, we shall  complete the proof
by an application of  
Lemma~\ref{lem:conv-min}.
Let $u_k$ be a sequence of minimizers of $F^{A,\Lambda,r}_{\e_k}(\zeta_k)$ on $\mathfrak{S}_{\e_k}^{A,\Lambda,r}(\zeta_k)$.
We extend $u_k$ by $\varphi_\Lambda$ on $A_+\setminus A$ for all $k$.
Consider an arbitrary subsequence of the original
sequence $u_k$, with this 
subsequence also denoted $u_k$. 
Since the $u_k$ are minimizers,
by Lemma \ref{lem:unif-bd} we have that
\begin{equation}
\sup_k F_{\e_k}^{A,\Lambda,r} (\zeta_k,u_k) \leq
\sup_k F_{\e_k}^{A,\Lambda,r} (\zeta_k,\varphi_\Lambda) < \infty.
\label{1212a}
\end{equation}
To check the conditions of
Lemma~\ref{lem:conv-min}, 
we shall apply Lemma \ref{lem:Frechet-Kolmogorov}.
To check that we may do so, we need to
prove that $u_k$ is bounded in $L^p(A_+,\R^n)$.
Since $\zeta_k \in \calA_{\rho_1,\rho_2}$,
%
%
and $u_k$ is affine on each simplex of $\calT(\e_k \zeta_k)$
contained in ${A}$, we claim that
there exists $C>0$ such that for all $k$
\begin{eqnarray*}
\|u_k\|_{L^p(A_{\calT(\e_k \zeta_k)},\R^n)}^p
&=&\sum_{T\in \calT(\e_k \zeta_k),T\subset A} 
\int_{T} |u_k(x)|^pdx \\
&\leq & \sum_{T\in \calT(\e_k \zeta_k),
T\subset {A}}  |T| \sup_{x \mbox{ vertex of }T} |u_k(x)|^p \\
&\leq & C \e_k^d \sum_{x \in \overline A \cap \e_k \zeta_k }|u_k(\e_k x)|^p. 
\end{eqnarray*}
%
This claim follows from the following two facts:
\begin{itemize}
\item the volume of any simplex in $\calT(\e_k \zeta_k)$
is bounded by $C_d(2\rho_2\e_k)^d $
(where $C_d$ denotes the volume of the unit ball in dimension $d$),
\item the number of neighbours of any point in the tessellation is uniformly bounded (as proved already) so that each point $x \in \e_k \zeta_k \cap \overline{A}$ 
lies in the closure of at most a fixed number of Delaunay cells $T \in \calT(\e_k \zeta_k)$
(independent of $k$).
\end{itemize}

On the other hand, since $r \geq 2 \rho_2$,
for all $v\in \mathfrak{S}_{\e_k}^{A,\Lambda,r}(\zeta_k)$,
$(v -\varphi_\Lambda)|_{A_{\calT(\e_k \zeta_k)}}  
\in W^{1,p}_0(A_{\calT(\e_k \zeta_k)})$, and we claim that we have a discrete Poincar\'e's inequality in 
$\mathfrak{S}_{\e_k}^{A}(\zeta_k) \cap W^{1,p}_0(A_{\calT(\e_k \zeta_k)})$,
namely 
\begin{multline}\label{eq:disc-Poinc}
\sum_{x\in \overline A\cap \e_k\zeta_k}\e_k^d |v(x)-\varphi_\Lambda(x)|^p
\\\,\leq\, C \sum\limits_{\begin{array}{cc}
\{x,y\}\in \mathcal{N}(\e_k \zeta_k)\cr
(x,y)\subset \overline{A} \end{array}} \e_k^d\left(\frac{|v(y)-\varphi_\Lambda(y)-v(x)+\varphi_\Lambda(x)|}{ |y-x|}\right)^p.
\end{multline}
where $C$ depends on  $A$ but not on $k$.
The proof of \eqref{eq:disc-Poinc}  uses the fact,
proved in \cite[Lemma~3]{Alicandro-Cicalese-Gloria-07b}, that for any
$x \in \overline A\cap \e_k\zeta_k$, one can find a path $\gamma(x)$ on
the Delaunay graph from $x$ to some point $x_0 \in \partial 
A_{\calT(\e_k \zeta_k)}\cap \e_k \zeta_k$
with $O(\e_k^{-1})$ steps, and moreover arrange that each Delaunay edge
appears in at most $O(\e_k^{-1})$ of the paths 
$\gamma(x)$ for $x \in \overline A\cap \e_k\zeta_k$.
Let $\gamma(x)=\{x_0,x_1,\dots,x_\ell,x\}$ for some $\ell \in \N$.
Then, for any 
$w\in \mathfrak{S}_{\e_k}^{A}(\zeta_k) 
\cap W^{1,p}_0(A_{\calT(\e_k \zeta_k)})$, 
\begin{equation*}
|w(x)|\,\leq\, |w(x_0)|+\sum_{j=1}^\ell |w(x_{j})-w(x_{j-1})| \,\leq 
\,C\sum_{j=1}^\ell \e_k \frac{|w(x_{j})-w(x_{j-1})|}{|x_j-x_{j-1}|}
\end{equation*}
where $x_{\ell +1}=x$, using in addition that the edge lengths are
of order $\e_k$.
Since the path has length of order $O(\e_k^{-1})$, Jensen's inequality yields
%
$$
|w(x)|^p\,\leq \,C\sum_{j=1}^\ell \e_k \left(\frac{|w(x_{j})-w(x_{j-1})|}{|x_j-x_{j-1}|}\right)^p.
$$
%
Putting  $w=v -\varphi_\Lambda$ and
summing over $x \in \overline A\cap \e_k\zeta_k$,
using that each Delaunay edge appears in at most
$O(\e_k^{-1})$ of the paths $\gamma(x)$, 
yields
the desired Poincar\'e inequality \eqref{eq:disc-Poinc}.

By the triangle inequality and the uniform bound 
$|x-y|\leq 2\rho_2$ for any edge $(x,y)$, the inequality
(\ref{eq:disc-Poinc}) for $v=u_k$ yields
\begin{equation*}
\sum_{x\in \overline A\cap \e_k\zeta_k}\e_k^d |u_k(x)|^p
\,\leq\, C \sum\limits_{\begin{array}{cc}
\{x,y\}\in \mathcal{N}(\e_k \zeta_k)\cr
(x,y)\subset  \overline{A} \end{array}} \e_k^d\left(\frac{|u_k(y)-u_k(x)|}{|y-x|}\right)^p +C|\Lambda|^p.
\end{equation*}
Using the property \eqref{eq:gcSRL} of $f_{nn}$ and the definition of $F^A_{\e_k}(\zeta_k)$, we deduce that
\begin{eqnarray*}
\|u_k\|_{L^p(A_{\calT(\e_k \zeta_k)},\R^n)}^p &\leq
&  C\sum\limits_{\begin{array}{cc}
\{x,y\}\in \mathcal{N}(\e_k \zeta_k)\cr
( x, y)\subset \overline{A} \end{array}} 
\e_k^d f_{nn}\left(\e^{-1}(y-x),\frac{u_k( y)-u_k( x)}{ |y-x|} 
\right)+C
\\
&\leq &C F^{A,\Lambda,r}_{\e_k}(\zeta_k,u_k)+C
\end{eqnarray*}
for some $C>0$ independent of $k$.
By \eq{1212a} 
the last expression is bounded uniformly in $k$.

Using Lemma \ref{lem1211},  followed 
by (\ref{1212a}),
we see that there are constants $C,C'$ such
that for $|h|<1/C$ and for all $k$ we have 
$$
\|\tau_h u_k-u_k\|_{L^p(A',\R^n)}^p
\leq C[ |h|^p (1+ F^A_{\e_k}(\zeta_k,u_k)) + \e_k ]
\leq C' [ |h|^p + \e_k].
$$
Hence Lemma~\ref{lem:Frechet-Kolmogorov} is applicable, 
showing  that the sequence $u_k$ converges 
along some subsequence to a limit
in $L^p(A,\R^n)$. Thus the closure of 
the original sequence $u_k$ is sequentially compact
in $L^p(A,\R^n)$, and by the equivalence
of compactness and sequential compactness in any metric space,  
this sequence is precompact.
We are thus in position to apply Lemma~\ref{lem:conv-min}, which concludes the proof.
\end{proof}
%
%
\begin{proof}[Proof of Theorem~\ref{th:approx1}]
In order to apply Theorem ~\ref{th:sub}
to the problem under consideration, we need to first define the
function  $S$;  we also define a family of auxiliary functions
$\tilde{S}^r, r >0$. Fix $\Lambda \in \M^{n\times d}$.
For  $\zeta \in \calA_{\rho_1,\rho_2}$,
and for $\e,r >0$,
recall the definition (\ref{eq:def-space-BC}) 
of ${\mathfrak S} _\e^{A,\Lambda,r}(\zeta)$, and
define
\begin{eqnarray*}
S(Q_R,\zeta) & = &
\inf_u \{F_1^{Q_{R- 2 \rho_2}}(\zeta,u)
\big| 
u\in {\mathfrak S}^{Q_{R-2 \rho_2},\Lambda,2 \rho_2}_1
\};
\\
\tilde S^r(Q_R,\zeta) & = &
\inf_u \{F_1^{Q_{R}}(\zeta,u)
\big| u\in {\mathfrak S}^{Q_R,\Lambda,r}_1
\}.
\end{eqnarray*}
%
%

Let $\zeta, \zeta' \in \calA_{\rho_1,\rho_2}$
with $\zeta \cap Q_{R} = \zeta' \cap Q_{R} $. 
Then for any $T \in \calT(\zeta)$ with 
$T \cap Q_{R-2 \rho_2} \neq \emptyset$,
the  circumsphere of $T$ contains no
point of $\zeta$ in its interior by definition, so it has radius less than
$2 \rho_2$ and therefore is contained in $Q_R$;
hence it contains no point of $\zeta'$ and hence
$T \in \calT(\zeta')$ as well.
In short the Delaunay tessellations of $\zeta$
and $\zeta'$ coincide on $Q_{R - 2 \rho_2}$.

Consequently,
our $ S$ is local on $\calD(Q)$ because
changes to $\zeta$ outside $Q_R$ do not affect
$S(Q_R,\zeta)$.
Also it can readily be deduced from
Theorem  \ref{th:main-gen} that
$S$ satisfies the averaging property on $\calA_{\rho_1,\rho_2}$
with respect to $Q$.

It remains to prove that $S$ is insensitive to boundary effects.
Let $\alpha \in (0,1)$.
It is enough to prove that for any sequence $\{\zeta_k\}_{k\in \N}$ of
general $(\rho_1,\rho_2)$-admissible point sets and any sequence of positive
numbers $\{R_k\}_{k\in \N}$ tending to infinity,
one has
\begin{equation}
\limsup_{k\to \infty}\frac{|S(Q_{R_k},\zeta_k)- 
S(Q_{R_k,R_k^\alpha},\zeta_k)|}{|Q_{R_k}|}\,=\,0
\label{1124a}
\end{equation}
where we are using notation $D_{R,r}$ from Section
\ref{sec:result}.
In fact, we first prove this for $\tilde S^r$, for  $r= 4 \rho_2$.

Set $\e_k=1/R_k$ for all $k\in \N$.
By Lemma~\ref{lem:compact},
after extraction (we do not relabel $k$) there exists a
Carath\'eodory function
$W:Q\times \M^{n\times d} \to \R^+$ such that for every open set $A\in {\calO}(Q)$, 
(\ref{111121}) holds, with $F^A$ given
by (\ref{FAdef}), and such that
$W$  is quasiconvex in its second variable and 
satisfies a standard growth condition \eqref{eq:def-sgc} of order $p>1$ (where $p$ is as in Theorem~\ref{th:main-gen}). 

Next we rescale.  For every open set  $A\in \calO(Q)$, and 
for all $\e >0, r >0 $, 
define ${\mathfrak S}_{\e}^{A,\Lambda,r}(\zeta_k)$  by
\eqref{eq:def-space-BC}
and define the rescaling $\tilde S^{\e,r}$ of $\tilde S^r$ by
\begin{eqnarray*}
\tilde S^{\e,r}(A,\zeta_k)
&=& \inf_u \{F_\e^{A}( \zeta_k,u)
\big| u\in {\mathfrak S}_{\e}^{A,\Lambda,r}(\zeta_k) \}.
\end{eqnarray*}
If $R >0 $ and $u \in C(Q_R)$ and if $v \in C(Q)$
is defined by $v(x)= R^{-1} u(Rx)$, then 
for any $\zeta \in \calA_{\rho_1,\infty}$
we have
$$
F_{1/R}^{Q}(\zeta,v)= R^{-d} F_1^{Q_R}(\zeta,u),
$$
and moreover  $u \in \frakS_1^{Q_R,\Lambda, r}(\zeta) $
if and only if $v \in \frakS_{1/R}^{Q,\Lambda,r}(\zeta) $ for
any $r>0$.
Hence for any $r >0$,
\begin{eqnarray}
\tilde S^r(Q_{R},\zeta) & = & 
\inf \{ R^d F_{1/R}^{Q}(\zeta,v) \, \big| \,
v \in \frakS_{1/R}^{Q,\Lambda,r} ( \zeta)
\}
\nonumber
\\
& = & R^d \tilde S^{1/R,r}(Q,\zeta_k).
\label{eq:nontrivial}
\end{eqnarray}

Next we claim that if $(r_k)_{k \in \N}$ is any sequence
of numbers satisfying $r_k \geq 4 \rho_2$ and
$r_k = o(R_k)$ as $k \to \infty$,
then
\begin{equation}\label{eq:pr-approx-2.1}
\limsup_{k\to \infty}
\left| \tilde S^{\e_k,r_k}(Q,\zeta_k)
-
R_k^{-d} \tilde S^{4 \rho_2}(Q_{R_k},\zeta_k)
\right|
\,=\, 0.
\end{equation}
%
%
To prove this, first we deduce
from Lemma~\ref{lem:BC}, that for any $r\geq 2 \rho_2$
and any open set $A \in \calO(Q)$, we have
\begin{equation}\label{eq:pr-approx-2.1-0}
\lim_{k\to \infty} \tilde S^{\e_k,r}(A,\zeta_k)
\,=\,\inf_{u\in W^{1,p}_0(A)}
\left\{ \int_{A}W(x,\Lambda+\nabla u(x))dx
\right\}.
\end{equation}

Let $\eta >0$.  For all $r\geq s \geq 4 \rho_2$ and all $\e>0$,
${\mathfrak S}_{\e}^{Q,\Lambda,r}(\zeta_k)
\subset {\mathfrak S}_{\e}^{Q,\Lambda,s}(\zeta_k)$,
so that by definition of $\tilde S^{\e,r}$ and
\eqref{eq:nontrivial}, 
for all $k\in \N$
we have that
\begin{equation}\label{eq:nontrivial1}
R_k^{-d} \tilde S^{4 \rho_2}(Q_{R_k},\zeta_k)
\,=\,
\tilde S^{\e_k,4 \rho_2}(Q,\zeta_k)
\,\leq \,
\tilde S^{\e_k,r_k}(Q,\zeta_k) .
\end{equation}
Also,
by the non-negativity of the energy functions $f_{nn}$ and $W_\vol$,
for all $k \in \N$ such that $r_k \leq \eta R_k$,
\begin{eqnarray}
\tilde S^{\e_k,r_k}(Q,\zeta_k)
& \,\leq \, & 
\tilde S^{\e_k,\eta R_k}(Q,\zeta_k) 
%
%
\nonumber \\
& \,\leq\, &
F^{Q\setminus \overline{Q_{1-2\eta}}}_{\e_k}(\zeta_k,\varphi_\Lambda)
+
\tilde S^{\e_k,2 \rho_2}(Q_{1-\eta},\zeta_k)
\label{eq:nontrivial2}
\end{eqnarray}
provided all the simplices of
$ \calT(\e_k \zeta_k)$ which
intersect $Q_{1-\eta}$ without being contained in $Q_{1-\eta}$ 
are contained in $Q\setminus Q_{1-2\eta}$ --- which holds provided
$2 \rho_2 \e_k < \eta$ since the
edge lengths of the simplices of 
$ \calT(\zeta_k)$ are bounded by $2 \rho_2$.

Lemma~\ref{lem:unif-bd} yields the bound 
$F^{Q\setminus  Q_{1-2\eta}}_{\e_k}(\zeta_k,
\varphi_\Lambda)\leq 
C|Q\setminus  Q_{1-2\eta}|(1+|\Lambda|^p)\leq C' \eta $.
Hence  by \eqref{eq:nontrivial1} and \eqref{eq:nontrivial2},
and (\ref{eq:nontrivial}) 
and property \eqref{eq:pr-approx-2.1-0} applied to $Q$ and to 
$Q_{1-\eta}$, 
this implies that
\begin{multline*}
\limsup_{k\to \infty}\left|
\tilde S^{\e_k,r_k}(Q,\zeta_k)
- R_k^{-d} \tilde S^{4 \rho_2}(Q_{R_k},\zeta_k)  
\right|
\,\leq \, 
C\eta 
\\
+\inf_{u\in W^{1,p}_0 (Q_{1-\eta}) }
\left\{ 
\int_{Q_{1-\eta}}W(x,\Lambda+\nabla u(x))dx
\right\}
-\inf_{u\in W^{1,p}_0(Q)}
\left\{ \int_{Q}W(x,\Lambda+\nabla u(x))dx
\right\}.
\end{multline*}
We claim that the right hand side  of this
inequality tends to zero
as $\eta$ vanishes. It is enough to prove that the limsup of the difference of the infimum problems is non-positive.
By the properties of $W$, $u \mapsto \int_{Q}W(x,\Lambda+\nabla u(x))dx$ is continuous on $W^{1,p}_0(Q)$.
Let $\eta_k$ be a sequence of positive numbers which tends to zero.
Since $C^\infty_0(Q)$
is dense in $W^{1,p}_0(Q)$ by definition, there exists
a sequence $u_k \in C^\infty_0(Q)$ such that for all $k\in \N$,
$\supp(u_k)\subset  Q_{1-2 \eta_k}$
and which satisfies
\begin{equation*}
\lim_{k\to\infty} \int_{Q}W(x,\Lambda+\nabla u_k(x))dx\,=\,
\inf_{u\in W^{1,p}_0(Q)}
\left\{ \int_{Q}W(x,\Lambda+\nabla u(x))dx
\right\}.
\end{equation*}
Since $W$ is non-negative and $u_k \in W^{1,p}_0
\big( Q_{1-\eta_k}\big)$ for all $k\in \N$, we have
\begin{equation*}
\inf_{u\in W^{1,p}_0 ( Q_{1-\eta_k} )}
\left\{ \int_{Q_{1-\eta_k}}W(x,\Lambda+\nabla u(x))dx,
\right\}
\,\leq \,  \int_{Q}W(x,\Lambda+\nabla u_k(x))dx,
\end{equation*}
so that 
\begin{multline*}
\limsup_{k\to \infty} 
\inf_{u\in W^{1,p}_0 (Q_{1-\eta_k})}
\bigg\{ 
\int_{ Q_{1-\eta_k}}W(x,\Lambda+\nabla u(x))dx
\bigg\} \\
\,\leq\,\inf_{u\in W^{1,p}_0(Q)}  
\left\{ \int_{Q}W(x,\Lambda+\nabla u(x))dx
\right\},
\end{multline*}
which is the claim.  This proves \eqref{eq:pr-approx-2.1}.

\medskip

Now observe that $Q_{R,R^\alpha} = Q_{R-R^\alpha}$. 
The test-functions in the infimum problems defining 
$\tilde S^{R^\alpha+4 \rho_2}(Q_R,\zeta_k)$ and 
$\tilde S^{4 \rho_2}(Q_{R-R^\alpha},\zeta_k)$ coincide on $Q_{R-R^\alpha}$,
since they do coincide on $\{T\in  \calT(\zeta_k)
\,\big|\,T \cap Q_{R-R^\alpha -4 \rho_2}\neq \emptyset\}$,
and take the value $\varphi_\Lambda$ 
elsewhere on $Q_{R-R^\alpha}$.
Hence, taking into account Definition~\ref{hypo:ener-SRL}, this yields for all $R>0$,
\begin{equation}\label{eq:pr-approx-2.3}
\tilde S^{R^\alpha +4 \rho_2}
(Q_R,\zeta_k)\,=\,
\tilde S^{4 \rho_2}(Q_{R-R^\alpha},\zeta_k)+R^{d-1}O(R^\alpha).
\end{equation}
Using 
\eqref{eq:pr-approx-2.1}, \eqref{eq:nontrivial} and \eqref{eq:pr-approx-2.3}
in succession then gives us
for $r = 4 \rho_2$
that
\begin{eqnarray*}
\tilde S^{r}(Q_{R_k}, \zeta_k) & = & 
R_k^d \tilde S^{\e_k, R_k^\alpha +r} (Q,\zeta_k) + o(R_k^d)
\\
& = & \tilde S^{R_k^\alpha+r} (Q_{R_k},\zeta_k) + o(R_k^d) 
\\
& = & \tilde S^r(Q_{R_k - R_k^\alpha},\zeta_k) + o(R_k^d) 
\end{eqnarray*}
and therefore (\ref{1124a}) holds  with $S$ replaced  by $\tilde S^{4 \rho_2}$
along the subsequence, and hence along the original sequence too.
To demonstrate \eq{1124a} for $S$ itself, note that
for any $R >4 \rho_2$ and $\zeta \in \calA_{\rho_1,\rho_2}$,
if
$u \in  \mathfrak{S}^{Q_{R-2 \rho_2},\Lambda,2\rho_2}_1(\zeta)$
then extending $u$ by $\varphi_\Lambda$ to $Q_R \setminus Q_{R -2 \rho_2}$
gives a function (also denoted $u$) in 
$ \mathfrak{S}^{Q_{R},\Lambda,4\rho_2}_1(\zeta)$, while conversely
if $u \in \mathfrak{S}^{Q_R,\Lambda, 4\rho_2}(\zeta)$ then
$u|_{Q_{R- 2 \rho_2}}  \in \mathfrak{S}^{Q_{R- 2 \rho_2},\Lambda, 2\rho_2}
(\zeta)$.  
Hence there is a constant $C>0$ such that
in both cases
we have 
\begin{eqnarray*}
0 \leq F_1^{Q_{R}}(\zeta,u) - F_1^{Q_{R- 2 \rho_2}}(\zeta,u)
\leq C R^{d-1},
\end{eqnarray*}
and hence 
$|S(Q_R,\zeta) - \tilde S^{4\rho_2}(Q_R,\zeta)|
\leq C R^{d-1}$. Then it is clear that  \eq{1124a} for $S$ follows
from \eq{1124a} for $\tilde S^{4\rho_2}$.

We are thus in position to apply
Theorem \ref{th:sub},
which concludes the proof of Theorem~\ref{th:approx1}.
\end{proof}

\subsection{Rubber elasticity and random parking in a bounded set}\label{sec:bded-set}

This subsection is devoted
to a second version of the second question of the introduction,
whereby we consider a more general domain D
than the cubes considered in Section~\ref{sec:ass-SET}.
Let $D\in \calO(\R^d)$ be some fixed open domain.  
We do not focus here on the approximation of $W_\ho$, but rather on the $\Gamma$-convergence result itself.
We define the discrete model as follows.
For all $\e>0$, we let $\xi^{1/\e}: = \xi^{D_{1/\e}}$
be the random parking measure with
parameter $\rho_0>0$ on
$D_{1/\e}:=\{\e^{-1} x, x\in D\}$, let $V_\e$ be the associated
Voronoi tessellation of $\R^d$, and let $ \calT_\e$
be the associated Delaunay tessellation of the convex hull
of $\xi^{1/\e}$. Although $\xi^{1/\e}$ is $(\rho_0/2,2\rho_0)$-admissible,
some edges of $ \calT_\e$
may be arbitrarily large.
In order to avoid such difficulties (which are
physically irrelevant anyway), we proceed as in the previous section and focus 
on $D_{1,4\e \rho_0}=\{x\in D\,|\,d_2(x,\partial D)\geq 4\e \rho_0\}$.
(Note that in the previous section $D=Q$ so that $D_{1,4\e \rho_0}=Q_{1,4\e \rho_0}=Q_{1-4\e\rho_0}$.)
By the geometric argument of the proof of Theorem~\ref{th:approx1}, $D_{1,4\e \rho_0}$
has the property that every simplex $T$ of the Delaunay tessellation  $\calT_\e$ is such that
if $\e T \cap D_{1,4\e \rho_0}\neq \emptyset$ then $\e T \subset D$, and therefore 
all the simplices of $\calT_\e$ contained in $D_{\e^{-1},4\rho_0}$ have uniformly bounded edge lengths.

Put $\rho_1 = \rho_0/2, \rho_2 = 2 \rho_0$, and
fix $D$.
For each $\e >0$ and $\zeta \in \calA_{\rho_1,\rho_2}(D_{1/\e})$,
and open $A \in \calO(D)$,
we define an energy functional 
$\overline F^A_\e(\zeta) := \overline F^A_\e(\zeta,\cdot) $ on 
 $L^p(A)$
as:
\begin{equation}\label{eq:def-F-var2}
\overline F_\e^A(\zeta,u)\,:=\,
F_\e^{A\cap D_{1,2\e \rho_2}}(\zeta,u) ,
\end{equation}
where $F_\e^A(\zeta,\cdot)$ is defined as in (\ref{eq:def-ener-vol2}).
%
%
%
We then have the following result:
%
\begin{theorem}\label{th:rp-gen}
Assume that $f_{nn}$  and $W_\vol$ are of class $\mathcal{U}_p$ and $\mathcal{V}_p$ for some $p>1$.
Then  the functionals $\overline{F}_\e^D(\xi^{1/\e})$ 
$\Gamma$-converge as $\e\to 0$ to the deterministic integral
functional $F_{\ho}^D:L^{p}(D,\R^n)\to[0,+\infty]$ defined by
\begin{equation*}
F_{\ho}^D(u)=\begin{cases}\int_D W_{\ho}(\nabla
u(x))\ud x & \text{if } u\in W^{1,p}(D,\R^n), \cr +\infty
&\text{otherwise},
\end{cases}
\end{equation*}
where $W_{\ho}$ coincides with the energy density defined by the asymptotic homogenization
formula \eqref{eq:asymp-fo-de} of Theorem~\ref{th:main-gen} when the 
random point process $\calL$ is the random parking measure $\xi$ 
with parameter $\rho_0$ in $\R^d$.
\end{theorem}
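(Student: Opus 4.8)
The plan is to combine three facts already available: the $\Gamma$-compactness of Lemma~\ref{lem:compact}, the homogenization result of Theorem~\ref{th:main-gen} for the stationary parking measure $\xi$, and the exponential stabilization of Lemma~\ref{e2lemma}, whose proof shows that $\xi^{D_R}$ coincides with $\xi$ on $D_{R,R^\alpha}$ once $R$ is large enough. Fix $\alpha\in(0,1)$ and work on the full-probability event on which this coincidence holds for $D$ and on which the functionals $F_\e^B(\xi)$ of Theorem~\ref{th:main-gen} $\Gamma$-converge for $\xi$ on $D$ itself and on every ball $B$ with rational centre and radius whose closure is contained in $D$; since the homogenized density in Theorem~\ref{th:main-gen} is independent of the domain, the $\Gamma$-limit of $F_\e^B(\xi)$ is $\int_B W_{\ho}(\nabla\,\cdot\,)$ with the \emph{same} $W_{\ho}$ for all such $B$, and also for $B=D$.

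First I would observe that Lemma~\ref{lem:compact} extends to the trimmed functionals $\overline F_\e^A(\zeta,\cdot)=F_\e^{A\cap D_{1,2\e\rho_2}}(\zeta,\cdot)$. Indeed, since $f_{nn}$ and $W_\vol$ are non-negative one has $F_\e^{A'}(\zeta,u)\le F_\e^{A''}(\zeta,u)$ whenever $A'\subset A''$, so for open sets with $\overline{A'}\subset A$ and $\overline A\subset D$ and for $\e$ small, $F_\e^{A'}(\zeta,u)\le\overline F_\e^A(\zeta,u)\le F_\e^A(\zeta,u)$; sandwiching the $\Gamma$-lower and $\Gamma$-upper limits of $\overline F_{\e_k}^A(\zeta_k)$ between those of $F_{\e_k}^{A'}(\zeta_k)$ and $F_{\e_k}^A(\zeta_k)$ and letting $A'\uparrow A$ (monotone convergence for the integral) identifies $\Gamma\text{-}\lim_k\overline F_{\e_k}^A(\zeta_k)$ with $\Gamma\text{-}\lim_k F_{\e_k}^A(\zeta_k)=F^A$ for the Carath\'eodory density $W$ furnished by Lemma~\ref{lem:compact} (quasiconvex in the second variable, with standard $p$-growth). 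Given a sequence $\e_k\to0$, I would take $\zeta_k$ to be a completion of the set $\xi^{1/\e_k}$, which is $(\rho_1,\rho_2)$-admissible in $D_{1/\e_k}$, to a set in $\calA_{\rho_1,\rho_2}$ (run an auxiliary parking process outside $D_{1/\e_k}$, as in the discussion following Definition~\ref{defi:sub}); because every Delaunay simplex of a $(\rho_1,\rho_2)$-admissible set has circumradius at most $\rho_2$, the simplices of $\calT(\e_k\zeta_k)$ meeting $A\cap D_{1,2\e_k\rho_2}$ already belong to $\calT(\e_k\xi^{1/\e_k})$, whence $\overline F_{\e_k}^A(\zeta_k,\cdot)=\overline F_{\e_k}^A(\xi^{1/\e_k},\cdot)$. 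After extraction, therefore, $\overline F_{\e_k}^A(\xi^{1/\e_k})$ $\Gamma$-converges to $\int_A W(x,\nabla\,\cdot\,)$ for every open $A\in\calO(D)$, including $A=D$.

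Next I would identify $W$. Let $B=B_r(x_0)$ be a ball with $\overline B\subset D$. For $k$ large, $B\subset D_{1,2\e_k\rho_2}$, so $\overline F_{\e_k}^B(\xi^{1/\e_k},\cdot)=F_{\e_k}^B(\xi^{1/\e_k},\cdot)$; moreover $\dist(B_{1/\e_k},\partial D_{1/\e_k})=\dist(B,\partial D)/\e_k$ exceeds $(1/\e_k)^\alpha$ by an arbitrarily large margin, so the coincidence in Lemma~\ref{e2lemma} makes $\xi^{1/\e_k}$ agree with $\xi$ on a fixed-width neighbourhood of $B_{1/\e_k}$ inside $D_{1/\e_k}$, and the circumradius bound (as in the proof of Theorem~\ref{th:approx1}) forces $\calT(\e_k\xi^{1/\e_k})$ and $\calT(\e_k\xi)$ to agree on $B$; hence $\overline F_{\e_k}^B(\xi^{1/\e_k},\cdot)=F_{\e_k}^B(\xi,\cdot)$ identically for $k$ large. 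As $\Gamma$-limits are unique and $F_{\e_k}^B(\xi)$ $\Gamma$-converges to $\int_B W_{\ho}(\nabla\,\cdot\,)$ by Theorem~\ref{th:main-gen}, we get $\int_B W(x,\nabla u)\,dx=\int_B W_{\ho}(\nabla u)\,dx$ for every $u\in W^{1,p}(B)$; taking $u=\varphi_\Lambda$ gives $\frac1{|B|}\int_B W(x,\Lambda)\,dx=W_{\ho}(\Lambda)$, and letting $r\to0$ at a Lebesgue point $x_0$ of $W(\cdot,\Lambda)$ yields $W(x_0,\Lambda)=W_{\ho}(\Lambda)$; running over a countable dense set of $\Lambda$ and using the continuity of $\Lambda\mapsto W(x,\Lambda)$ and of $W_{\ho}$ gives $W(x,\cdot)=W_{\ho}$ for a.e. $x\in D$. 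Combined with the previous step applied to $A=D$ (legitimate since $D\in\calO(D)$), this shows that $\overline F_{\e_k}^D(\xi^{1/\e_k})$ $\Gamma$-converges to $F_{\ho}^D$ along the extracted subsequence; since the limit does not depend on the subsequence, the Urysohn property of $\Gamma$-convergence upgrades this to $\Gamma\text{-}\lim_{\e\to0}\overline F_\e^D(\xi^{1/\e})=F_{\ho}^D$ on the full-probability event, which is the assertion.

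The step I expect to be the main obstacle is the identification: one has to be sure that, on subdomains whose closure lies in $D$, replacing $\xi^{1/\e}$ by the stationary $\xi$ leaves the \emph{whole} energy functional unchanged — not merely a scalar quantity as in Lemma~\ref{e2lemma} — which requires controlling simultaneously the mild nonlocality of the Delaunay construction and the boundary effects of parking on a bounded domain. Both are dealt with by the hard-core geometry already exploited for Theorem~\ref{th:approx1} (Delaunay circumradii bounded by $\rho_2$), together with the observation that the stabilization window $(1/\e)^\alpha$, although diverging, is $o(1/\e)$ and so rescales to a boundary layer of $D$ of vanishing width $\e^{1-\alpha}$, so that any subdomain with closure in $D$ eventually lies in the region where $\xi^{1/\e}$ and $\xi$ agree. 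A secondary, purely deterministic point is the extension of Lemma~\ref{lem:compact} to the trimmed functionals $\overline F$, which is exactly the domain-monotonicity sandwich above and, notably, avoids any explicit construction of recovery sequences near $\partial D$.
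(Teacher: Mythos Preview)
Your argument is correct and follows the same overall architecture as the paper's proof—compactness to a Carath\'eodory density $W$, then identification $W(x,\cdot)=W_{\ho}$ via stabilization (Lemma~\ref{e2lemma}) on interior subdomains, then Urysohn—but the two implementations differ in both steps.

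For compactness, the paper does not complete $\xi^{1/\e_k}$ to a point set in $\calA_{\rho_1,\rho_2}$; instead it states a variant of Lemma~\ref{lem:compact} (Lemma~\ref{lem:compact-local}) tailored to point sets that are admissible only in $D_{1/\e_k}$, remarking that the proofs in \cite{Alicandro-Cicalese-Gloria-07b} already work on compactly contained subdomains. Your completion-plus-sandwich route is a legitimate alternative and has the advantage of reusing Lemma~\ref{lem:compact} verbatim; the price is the extra locality check that $\overline F_{\e_k}^A(\zeta_k,\cdot)=\overline F_{\e_k}^A(\xi^{1/\e_k},\cdot)$, which is correct but requires a covering argument (the $\zeta_k$-Delaunay simplices meeting $A\cap D_{1,2\e_k\rho_2}$ tile that region and are $\xi^{1/\e_k}$-Delaunay, hence by uniqueness of the triangulation on that region the two tessellations coincide there).

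For the identification, the paper goes through the variational characterization of quasiconvex densities by local minima on small cubes, invokes Lemma~\ref{lem:BC} to convert those continuum minima into discrete ones for $\xi^{1/\e}$, uses stabilization to replace $\xi^{1/\e}$ by $\xi$, and finally cites the subadditive ergodic argument inside \cite[Theorem~2]{Alicandro-Cicalese-Gloria-07b} to get $W_{\ho}(\Lambda)$. Your route is more direct: on any ball $B$ with $\overline B\subset D$ you have $F_{\e_k}^B(\xi^{1/\e_k},\cdot)=F_{\e_k}^B(\xi,\cdot)$ eventually, so the $\Gamma$-limits agree as \emph{functionals}; Theorem~\ref{th:main-gen} then hands you $\int_B W_{\ho}(\nabla\cdot)$ immediately, and testing with $\varphi_\Lambda$ plus Lebesgue differentiation finishes. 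This bypasses Lemma~\ref{lem:BC} entirely, at the cost of fixing in advance a countable family of balls on which Theorem~\ref{th:main-gen} is applied. Both arguments rest on exactly the same stabilization input and yield the same conclusion.
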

In particular, combining Theorems \ref{th:rp-gen}  
and \ref{th:main-gen}
shows that considering the restriction
of the random parking measure $\xi$ to $D_{1/\e}$ 
or considering the random parking measure $\xi^{1/\e}$ on $D_{1/\e}$ yield the same thermodynamic limit
for the discrete model of rubber dealt with in this section.

To prove this result, we need 
the following localized version of
Lemma~\ref{lem:compact}
for point sets defined on bounded domains, which is
proved similarly to 
Lemma~\ref{lem:compact}; 
taking $D_{1,4\e\rho_0}$ instead of $D$ does not change
the estimates in
\cite{Alicandro-Cicalese-Gloria-07b}
since they are all set on domains compactly included in $D$.
\begin{lemma}\label{lem:compact-local}
Let $D\in \calO(\R^d)$ be an open set, and $\{\e_k\}_{k\in \N}$ be a sequence of positive
numbers converging to zero.
Let $\{\zeta_k\}_{k\in \N}$ be a sequence of $(\rho_1,\rho_2)$-admissible point sets
in $D_{1/\e_k}$.
Then there exist a subsequence (not relabeled) and a Carath\'eodory function $W:D\times \M^{n\times d}\to \R^+$
such that for all open sets $A\in \calO(D)$ the functionals 
$\overline F_{\e_k}^A(\zeta_k)$ 
given by
(\ref{eq:def-F-var2})
%
satisfy
\begin{equation}
\Gamma-\lim_{k\to \infty} \overline F_{\e_k}^A(\zeta_k) = F^A
\end{equation}
where the integral
functional $F^A:L^{p}(D,\R^n)\to[0,+\infty]$ is defined by
\begin{equation}
F^A(u)=\begin{cases}\int_A W(x,\nabla
u(x))\ud x & \text{if } u\in W^{1,p}(A,\R^n), \cr +\infty
&\text{otherwise}.
\end{cases}
\end{equation}
In addition, $W$ is quasiconvex in its second variable and
satisfies a standard growth condition \eqref{eq:def-sgc} of order $p$.
\end{lemma}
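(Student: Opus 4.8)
The plan is to follow the proof of Lemma~\ref{lem:compact} --- equivalently, of \cite[Theorem~3]{Alicandro-Cicalese-Gloria-07b} with the volumetric term adjoined exactly as in the proof of Lemma~\ref{lem:compact} --- the only genuinely new point being that here the point sets $\zeta_k$ are defined only on the bounded domains $D_{1/\e_k}$ and that $\overline F_{\e_k}^A(\zeta_k)$ is built from the restriction of $\calT(\e_k\zeta_k)$ to $A\cap D_{1,2\e_k\rho_2}$. By the geometric argument recalled just before the statement of Theorem~\ref{th:rp-gen} (which goes back to the proof of Theorem~\ref{th:approx1}), every simplex $T$ of $\calT(\e_k\zeta_k)$ with $\e_k T$ meeting $D_{1,4\e_k\rho_0}$ is contained in $D$; hence all simplices contributing to $\overline F_{\e_k}^A(\zeta_k)$ have edge lengths at most $2\rho_2\e_k$, and near any open set $A\subset D$ the relevant part of the tessellation coincides with the Delaunay tessellation of a genuinely $(\rho_1,\rho_2)$-admissible point set of $\R^d$. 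Consequently the bounded-geometry facts used throughout \cite{Alicandro-Cicalese-Gloria-07b} (a uniform bound on the number of Delaunay neighbours of a vertex, on the volume of a simplex, and on edge lengths, together with the existence of short connecting paths on the Delaunay graph as in the proof of Lemma~\ref{lem:BC}) hold with constants \emph{not} depending on $k$, and they are only ever needed on subsets compactly contained in $D$ --- which is exactly what the restriction to $D_{1,2\e_k\rho_2}$ guarantees.

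Concretely, I would first apply the compactness theorem for $\Gamma$-convergence in $L^p$: along a subsequence (not relabeled) $\overline F_{\e_k}^A(\zeta_k)$ $\Gamma$-converges to some functional $F^A$ for every $A$ in a fixed countable family of open subsets of $D$, and a diagonal argument together with inner regularity of the $\Gamma$-limit as a function of the domain extends this to all $A\in\calO(D)$. Next I would identify $F^A$. The lower bound in \eqref{eq:gcSRL} combined with the discrete Poincar\'e inequality used in the proof of Lemma~\ref{lem:BC} yields a uniform $W^{1,p}$-coercivity, while \eqref{eq:gcSRL}, \eqref{eq:sgc-vol-SRL} and Lemma~\ref{lem:unif-bd} provide a matching upper bound along piecewise-affine interpolants; hence $F^A$ is finite precisely on $W^{1,p}(A,\R^n)$. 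The set-function $A\mapsto F^A(u)$ is sub- and super-additive up to the bounded boundary edges, so the De Giorgi--Letta criterion shows it is the trace of a Borel measure, and the Buttazzo--Dal Maso integral representation theorem then produces a Carath\'eodory integrand $W:D\times\M^{n\times d}\to\R^+$, quasiconvex in its second variable and satisfying the standard growth condition \eqref{eq:def-sgc} of order $p$, such that $F^A(u)=\int_A W(x,\nabla u(x))\,\mathrm{d}x$ for $u\in W^{1,p}(A,\R^n)$; since $A\cap D_{1,2\e_k\rho_2}\uparrow A$ as $k\to\infty$, the integral is over all of $A$. The volumetric contribution $F_{\vol,\e}$ is already of integral form and continuous, hence it is incorporated into $W$ in the standard way, exactly as in the proof of Lemma~\ref{lem:compact} (see \cite[Chapter~12]{Braides-98}).

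The main obstacle is the purely bookkeeping one of checking that no constant in the estimates imported from \cite{Alicandro-Cicalese-Gloria-07b} degenerates when the underlying point process lives on $D_{1/\e_k}$ rather than on $\R^d$; as explained above, the restriction to $D_{1,2\e_k\rho_2}$ disposes of it, since on that domain the configuration is locally $(\rho_1,\rho_2)$-admissible with all the associated geometric bounds holding uniformly in $k$. Because the proof of \cite[Theorem~3]{Alicandro-Cicalese-Gloria-07b} is entirely deterministic and all of its estimates are set on domains compactly included in the reference domain, it then carries over verbatim, and the Carath\'eodory property, quasiconvexity in the second variable, and the standard growth condition of order $p$ for $W$ follow from the corresponding conclusions there.
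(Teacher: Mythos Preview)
Your proposal is correct and follows essentially the same approach as the paper. The paper's own proof is in fact just a one-sentence remark that Lemma~\ref{lem:compact-local} is proved similarly to Lemma~\ref{lem:compact}, with the sole observation that replacing $D$ by $D_{1,4\e\rho_0}$ does not change the estimates in \cite{Alicandro-Cicalese-Gloria-07b} since they are all set on domains compactly included in $D$; your write-up expands this into the detailed outline (uniform bounded-geometry constants, $\Gamma$-compactness plus diagonal argument, De Giorgi--Letta and Buttazzo--Dal Maso, treatment of the volumetric term via \cite[Chapter~12]{Braides-98}) that underlies that remark.
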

%
%
\begin{proof}[Proof of Theorem \ref{th:rp-gen}]
By  Lemma~\ref{lem:compact-local},
there exists almost surely an energy density $W:D\times \M^{n\times d} \to \R^+$ and a subsequence such that 
for all open sets $A\in \calO(D)$ the functionals $\overline F^A_\e(\xi^{1/\e})$
$\Gamma$-converge along the subsequence to 
$F^{A}:L^{p}(A,\R^n)\to[0,+\infty]$ defined by
\begin{equation*}
F^A(u)=\begin{cases}\int_A W(x,\nabla
u(x))\ud x & \text{if } u\in W^{1,p}(A,\R^n), \cr +\infty
&\text{otherwise}.
\end{cases}
\end{equation*}
As in \cite[Theorem~2]{Alicandro-Cicalese-Gloria-07b} we then appeal to 
the characterization of non-homogeneous quasiconvex functions by their minima: 
for almost every $x\in D$ and for all $\Lambda \in \M^{n\times d}$,
$$
W(x,\Lambda)\,=\,\lim_{\rho\to 0} \frac{1}{|Q_\rho|} 
\inf_{v\in W^{1,p}_0(x+Q_\rho,\R^n)}
\bigg\{ \int_{x+Q_\rho}W(y,\Lambda+\nabla v(y))
\bigg\}.
$$
By Lemma~\ref{lem:BC}, for all $\rho>0$ small enough so that 
$x+\overline Q_{2 \rho} \subset D$, we have
\begin{multline*}
\inf_{v\in W^{1,p}_0(x+Q_\rho,\R^n)}
\bigg\{ \int_{x+Q_\rho}W(y,\Lambda+\nabla v(y))
\bigg\} \\
=\,\lim_{\e \to 0} \inf\bigg\{F_\e^{x+Q_\rho}(\xi^{1/\e},v)
\, \big| \,
v\in {\mathfrak{S}}_\e^{x+Q_\rho} (\xi^{1/\e}),
v \equiv \varphi_\Lambda
\mbox{ on } \e \xi^{1/\e} \cap (x+(Q_{\rho} \setminus Q_{\rho - 4 \e \rho_0}))
\bigg\}.
\end{multline*}
On the one hand, as proved in Lemma~\ref{e2lemma}, for all $0<\alpha<1$ there exists an almost surely finite random variable
$R_0$ such that the measures $\xi$ and $\xi^{1/\e}$ coincide on $D_{R,R^\alpha}$ for all $R\geq R_0$. 
Since for all $\e$ small enough, $\{\e^{-1}y\,|\,y\in x+Q_\rho\}\subset  
D_{\e^{-1},{\e}^{-\alpha}}$, 
there exists $\e_0$ such that $\e\xi$ and $\e \xi^{1/\e}$ coincide on
$x+Q_\rho$ for all $\e<\e_0$. Therefore, for all $\e<\e_0$,
\begin{multline*}
\inf\bigg\{F_\e^{x+Q_\rho}(\xi^{1/\e},v)\,\big|\,v\in
{\mathfrak{S}}_\e^{x+Q_\rho} (\xi^{1/\e}),
v \equiv \varphi_\Lambda 
\mbox{ on }
\e  \xi^{1/\e} \cap  (x+(Q_\rho \setminus Q_{\rho - 4 \e \rho_0}))
\bigg\}
\\
=\inf\bigg\{F_\e^{x+Q_\rho}(\xi,v)\,\big|
\,v\in {\mathfrak{S}}_\e^{x+Q_\rho} (\xi),
v \equiv \varphi_\Lambda 
\mbox{ on }
\e  \xi \cap   (x+(Q_\rho \setminus Q_{\rho - 4\e \rho_0} )) \bigg\}.
\end{multline*}
On the other hand, as proved in Step~1 of the proof of \cite[Theorem~2]{Alicandro-Cicalese-Gloria-07b}, by a suitable application of the subadditive ergodic theorem,
\begin{multline*}
\lim_{\e \to 0} 
\inf\bigg\{F_\e^{x+Q_\rho}(\xi,v)\,\big| 
\,v\in {\mathfrak{S}}_\e^{x+Q_\rho} (\xi),
v \equiv \varphi_\Lambda 
\mbox{ on }
\e  \xi \cap (x+(Q_\rho \setminus Q_{\rho - 4\e \rho_0} )) \bigg\}
\,=\,W_\ho(\Lambda)
\end{multline*}
almost surely.
Hence, $W(x,\cdot)$ coincides with $W_\ho$ for almost every $x\in D$.
In particular, by uniqueness of the limit, the entire sequence converges almost surely.
\end{proof}
%

%


\section*{Acknowledgements}

The authors acknowledge the support of INRIA through the ``Action de recherche collaborative'' DISCO.
This work was also supported by Ministry of Higher Education and Research,
Nord-Pas de Calais Regional Council and FEDER through the ``Contrat de
Projets Etat Region (CPER) 2007-2013''.


\bibliographystyle{amsalpha}

\begin{thebibliography}{GLTV}

\bibitem[AK81]{Akcoglu-Krengel-81}
M. A. Akcoglu, and U. Krengel,
\emph{Ergodic theorems for superadditive processes},
J. reine angew. Math., \textbf{323}(1981), 53--67.

\bibitem[ACG]{Alicandro-Cicalese-Gloria-07b}
R.~Alicandro, M.~Cicalese, and A.~Gloria, \emph{Integral representation results
for energies defined on stochastic lattices and application to nonlinear
elasticity}, Arch. Ration. Mech. Anal.,\textbf{200}(2011), no.~3, 881--943.

\bibitem[BD98]{Braides-98}
A.~Braides and A.~Defranceschi (1998) \emph{Homogenization of Multiple Integrals.}
Oxford University Press, Oxford.

\bibitem[Br02]{Braides-02}
A.~Braides (2002) \emph{${\Gamma}$-convergence for beginners.}
Oxford University Press, Oxford.

\bibitem[DMM]{DalMaso-Modica-86}
G. Dal Maso, and L. Modica, \emph{Nonlinear stochastic homogenization and ergodic theory},
J. Reine Angew. Math., \textbf{368}(1986), 28--42.

\bibitem[Del34]{Delaunay-34}
B. Delaunay, \emph{Sur la sph\`ere vide},
Bull. Acad. Sci. URSS, VII. Ser. 1934, \textbf{6}(1934), 793--800.

\bibitem[Eva98]{Evans-98}
L.~C. Evans (1998), \emph{Partial differential equations.}
American Mathematical Society, Providence, RI.

\bibitem[For95]{Fortune-95}
S.~Fortune.
\newblock Vorono\u\i\ diagrams and {D}elaunay triangulations.
\newblock In {\em Computing in {E}uclidean geometry}, volume~4 of {\em Lecture
Notes Ser. Comput.}, pages 225--265. World Scientific Publishing Co Pte Ltd,
Singapore, 1995.

\bibitem[GLTV]{Gloria-LeTallec-Vidrascu-08b}
A.~Gloria, P.~Le~Tallec, and M.~Vidrascu, \emph{Comparison of network-based models for rubber}, Preprint (2012).

\bibitem[GNO]{Gloria-Neukamm-Otto-12}
A.~Gloria, S. Neukamm, and F. Otto, \emph{Approximation of effective coefficients by periodization in stochastic homogenization}, in preparation.

\bibitem[MR]{Meester-Roy96}
R. Meester and R. Roy (1996) 
{\em Continuum percolation.} 
Cambridge University Press, Cambridge.

\bibitem[OBSNC]{Okabe-00}
A. Okabe, B. Boots, K. Sugihara, and S. Nok Chiu (2000)
{\em Spatial {T}essellations: Concepts and Applications of {V}oronoi Diagrams.}
Wiley Series in Probability and Statistics, John Wiley \& Sons, Chichester.

\bibitem[Pen01]{Penrose-01}
M. D. Penrose, \emph{Random parking, sequential adsorption, and the jamming
limit}, Commun. Math. Phys. \textbf{218} (2001), 153--176.

\bibitem[PY02]{Penrose-Yukich-02}
M. D. Penrose and J.E. Yukich, \emph{Limit theory for random sequential packing
and deposition}, Ann. Appl. Probab. \textbf{12} (2002), no.~1, 272--301.

\bibitem[SPY]{SPY}
T. Schreiber, M. D. Penrose, and J. E. Yukich, 
\emph{Gaussian limits for multidimensional random sequential packing at
saturation}, Comm. Math. Phys. {\bf 272} (2007),  167--183.

\bibitem[Yuk98]{Yukich-98}
J. E. Yukich, \emph{Probability theory of classical {E}uclidean optimization
         problems}, Lecture Notes in Mathematics {\bf 1675} (1998).

\bibitem[Yuk10]{Yukich-10}
J. E. Yukich, \emph{Limit theorems in discrete stochastic geometry}, Preprint (2010).

\end{thebibliography}

\end{document}